\newtheorem{theorem}{Theorem}
\newtheorem{proposition}{Proposition}
\newtheorem{definition}{Definition}
\newtheorem{lemma}{Lemma}
\newtheorem{corollary}{Corollary}
\theoremstyle{definition}
\newtheorem{remark}{Remark}
\theoremstyle{definition}
\newtheorem{example}{Example}
\newcommand{\longdash}[1][2em]{\makebox[#1]{$\m@th\smash-\mkern-7mu\cleaders\hbox{$\mkern-2mu\smash-\mkern-2mu$}\hfill\mkern-7mu\smash-$}}
\newcommand{\omitskip}{\kern-\arraycolsep}
\newcommand{\T}{\intercal}
\DeclareMathOperator*{\KL}{\mathcal{D}_\text{KL}}
\DeclareMathOperator*{\TV}{\mathrm{d}_\text{TV}}
\newcommand{\iid}{\overset{\textsf{iid}}{\sim}}
\newcommand{\indep}{\mathrel{\text{\scalebox{1.07}{$\perp\mkern-10mu\perp$}}}}
\def\model{{\mathcal{M}}}
\DeclareMathOperator{\diag}{\mathsf{diag}}
\DeclareMathOperator{\E}{\mathbb{E}}
\DeclareMathOperator{\G}{\mathsf{G}}
\DeclareMathOperator{\Tr}{\mathsf{Tr}}
\newcommand*\dd{\mathop{}\!\mathrm{d}}
\newcommand{\I}{\mathbb{I}}
\newcommand{\distconvto}{\overset{d}{\Rightarrow}}
\newcommand{\weaklyconvto}{\rightsquigarrow}
\newcommand{\N}{\mathcal{N}}
\newcommand{\PSD}{\mathbb{R}_{\text{PSD}}}
\newcommand{\PD}{\mathbb{R}_{\text{PD}}}
\newcommand{\err}{\text{err}}
\newcommand{\pow}{\text{pow}} 
\begin{document}

\begin{frontmatter}

\title{On Testing Marginal versus Conditional Independence}
\runtitle{Marginal versus Conditional Independence}

\begin{aug}
\author{\fnms{F. Richard}~\snm{Guo}\ead[label=e1]{ricguo@stat.washington.edu}}
\and
\author{\fnms{Thomas S.}~\snm{Richardson}\ead[label=e2]{thomasr@u.washington.edu}}
\runauthor{Guo and Richardson}
\affiliation{Department of Statistics\\University of Washington, Seattle}

\address{Department of Statistics\\
University of Washington\\
Box 354322\\
Seattle, WA 98195 \\
\printead{e1}\\
\phantom{E-mail: }\printead*{e2}\\}
\today
\end{aug}

\begin{abstract}
We consider testing marginal independence versus conditional independence in a trivariate Gaussian setting. The two models are non-nested and their intersection is a union of two marginal independences. 
We consider two sequences of such models, one from each type of independence, that are closest to each other in the Kullback-Leibler sense as they approach the intersection. They become indistinguishable if the signal strength, as measured by the product of two correlation parameters, decreases faster than the standard parametric rate. Under local alternatives at such rate, we show that the asymptotic distribution of the likelihood ratio depends on where and how the local alternatives approach the intersection. To deal with this non-uniformity, we study a class of ``envelope'' distributions by taking pointwise suprema over asymptotic cumulative distribution functions. We show that these envelope distributions are well-behaved and lead to model selection procedures with rate-free uniform error guarantees and near-optimal power. To control the error even when the two models are indistinguishable, rather than insist on a dichotomous choice, the proposed procedure will choose either or both models. 
\end{abstract}

\begin{keyword}
\kwd{model selection}
\kwd{collider}
\kwd{conditional independence}
\kwd{likelihood ratio test}
\kwd{confidence}
\kwd{Gaussian graphical model}
\end{keyword}

\end{frontmatter}

\tableofcontents

\section{Introduction}
It is often of interest to test marginal or conditional independence for a set of random variables. For example, in the context of graphical modeling, the PC algorithm \citep{spirtes2000causation} for directed acyclic graph model selection determines the orientation of an unshielded triple $X - Z - Y$ based on whether the separating set of $X$ and $Y$ contains $Z$: if so, $X \indep Y \mid Z$ and $Z$ is not a collider; if not, $X \indep Y$ and the triple is oriented as $X \rightarrow Z \leftarrow Y$. The reader is referred to \citet{dawid1979conditional, lauritzen1996graphical, koller2009probabilistic} and \citet{Reichenbach1956} for more discussion.

Here we consider the simplest case, namely testing $X_1 \indep X_2$ versus $X_1 \indep X_2 \mid X_3$ in a trivariate Gaussian setting. For testing whether \emph{a specific} marginal or conditional independence holds, it is common to use the correlation coefficient or partial correlation coefficient under Fisher's $z$-transformation \citep{fisher1924distribution} as the test statistic. Under independence, the transformed correlation coefficient is approximately distributed as a normal distribution with zero mean and variance determined by the sample size and the number of variables being conditioned on \citep{hotelling1953new, anderson1984introduction}. In this paper, however, we assume \emph{at least one} type of independence holds (from prior knowledge or precursory inference) and we want to contrast the two types. To this end, we will use the likelihood ratio statistic, which often provides intuitively reasonable tests for composite hypotheses \citep{perlman1999emperor}, especially in terms of model selection. 

\subsection*{Contributions} We briefly highlight our main contributions as follows. Firstly, we consider an important problem in non-nested model selection, which is in general less well-understood than the nested case. Secondly, we take an approach that is different from the usual Neyman-Pearson framework, in the sense that we treat the two models symmetrically and allow them to be both selected if the data does not significantly prefer one over the other. Thirdly, by introducing a new family of envelope distributions, we deal with non-uniform asymptotic laws of the likelihood ratio statistic. The model selection procedures we propose come with asymptotic guarantees that are applicable to all varieties of relations between the sample size and the signal strength; an assumption on the asymptotic rate is not required. 

\subsection*{Notation} The following notation is used through the paper. $\PD^{n \times n}$ denotes $n \times n$ positive definite matrices. $\Theta$ denotes the parameter space and $\model$ denotes a model, which is subset of the parameter space. $\model_1 \setminus \model_2$ denotes the set of parameters that belong to $\model_1$ but not belong to $\model_2$. 

We use $P$ and $Q$ to denote measures, and similarly $P_n$ and $Q_n$ to denote sequences of measures. $\mu$ is reserved for the Lebesgue measure. Lower-case letters $p, q$ denote the densities of $P, Q$ with respect to $\mu$. $P_n^n$ denotes the $n$-sample product (tensorized) measure of $P_n$, namely the law of $X_1, \dots, X_n \iid P_n$. We write $P_n \distconvto P$ if $P_n$ converges (weakly) to $P$ in law. For $X_n(t)$ a stochastic process indexed by $t \in T$, we write $X_n \weaklyconvto X$ if $X_n(t)$ converges weakly to $X(t)$. 

For two sequences $\{a_n\}$ and $\{b_n\}$, we write $a_n = O(b_n)$ if there exists a constant $c < \infty$ such that $a_n \leq c b_n$ for large enough $n$; $a_n = o(b_n)$ and $b_n = \omega(a_n)$ if $\lim_{n \rightarrow \infty} a_n / b_n = 0$; $a_n \asymp b_n$ if $a_n = O(b_n)$ and $b_n = O(a_n)$.

Also, we write $ x \lesssim y$ if $x \leq c y$ for some constant $c > 0$. We write $x \vee y = \max(x,y)$ and $x \wedge y = \min(x,y)$.

\subsection*{Setup} For $(X_1, X_2, X_3) \sim \N\{0, \Sigma = (\sigma_{ij})\}$ with parameter space $\Theta$ being the set of $3 \times 3$ real positive definite matrices $\PD^{3 \times 3}$, we consider testing
\begin{equation}
\mathcal{M}_0: X_1 \indep X_2 \quad  \text{versus} \quad \mathcal{M}_1: X_1 \indep X_2 \mid X_3.
\end{equation}
$\model_0$ and $\model_1$ are algebraic models \citep{drton2007algebraic} as represented by equality constraints
\begin{equation}
\model_0: \{\sigma_{12}=0\}, \quad \model_1: \{\sigma_{12} \sigma_{33} = \sigma_{13} \sigma_{23}\}
\end{equation}
imposed on $\Theta$. They are visualized in the correlation space (ignoring the variances) in \Cref{fig:geometry}.

\begin{figure}[!ht]
\includegraphics[width=0.5\textwidth]{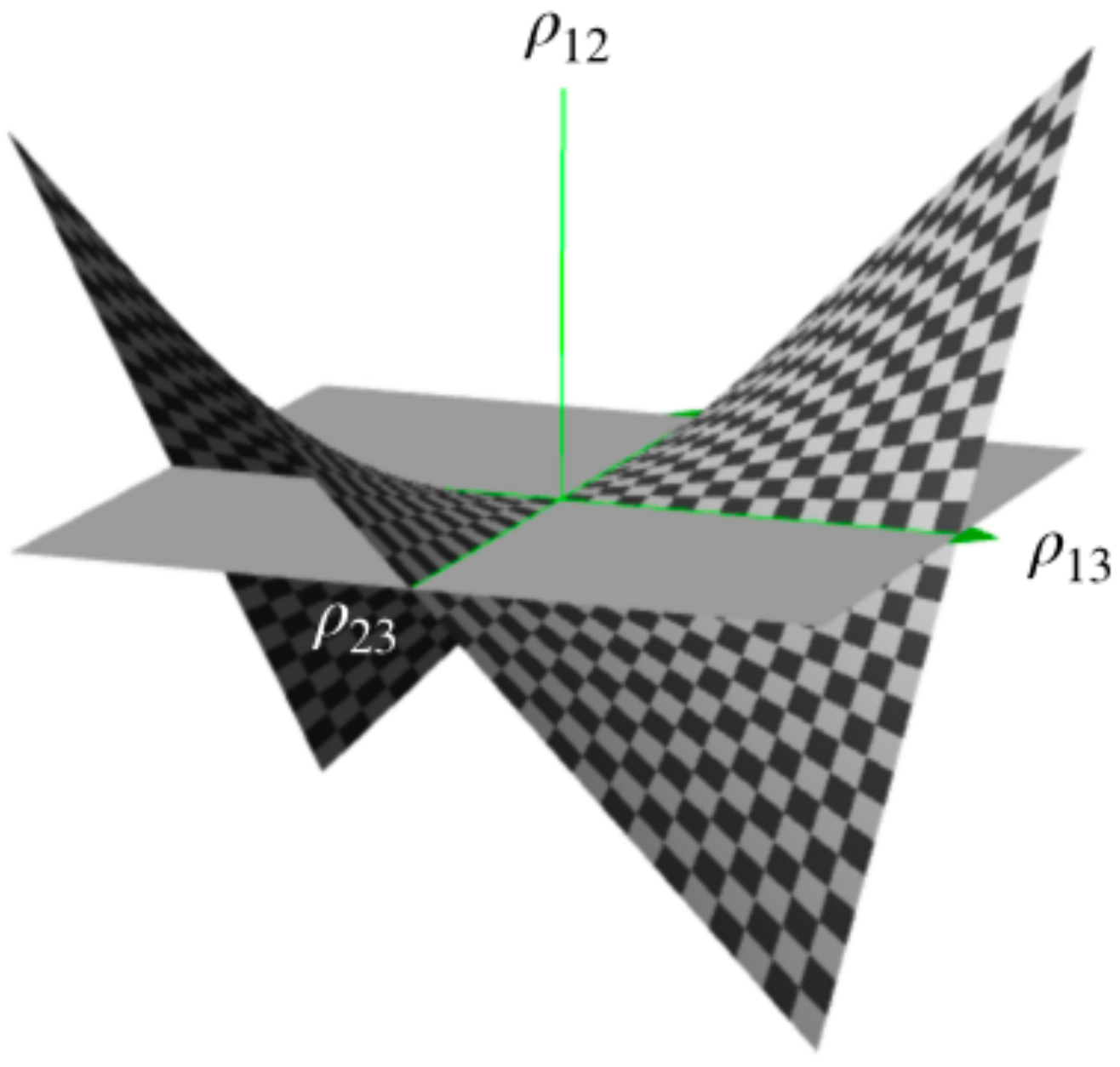}
\caption{The two models visualized in the correlation space: $\model_0: \rho_{12} = 0$ (grey plane) and $\model_1: \rho_{12} = \rho_{13} \rho_{23}$ (checkerboard). $\model_0 \cap \model_1$ consists of the $\rho_{13}$ and $\rho_{23}$ axes; they intersect at the origin $\model_{\text{sing}}$. See also \citet[Figure 3]{evans2018model}.}
\label{fig:geometry}
\end{figure}

$\mathcal{M}_0$ and $\mathcal{M}_1$ are non-nested and they further intersect at the origin
\begin{equation}
\mathcal{M}_{\text{sing}}: \{\sigma_{12} = \sigma_{13} = \sigma_{23} = 0\},
\end{equation}
which is a \emph{singularity} within $\model_0 \cap \model_1$ that corresponds to diagonal covariances. At $\model_{\text{sing}}$ the likelihood ratio statistic is not regular in the sense that the tangent cones (linear approximations to the parameter space; see \citet[Chap. 4.6]{bertsekas2003convex}) of the two models coincide. 
As pointed out by \citet{evans2018model}, we will see that the equivalence of local geometry between the two models presents a challenge for model selection. 
It is also worth mentioning that, in the setting of nested model selection, the behavior of the likelihood ratio of testing $\mathcal{M}_0 \cap \mathcal{M}_1$ against a saturated model, especially at the singularity, has been studied by \citet{drton2006algebraic, drton2007algebraic, drton2009likelihood}.

\subsection*{Organization} The paper is organized as follows. In \cref{sec:mle}, we derive the maximum likelihood estimates under the two types of independence models, and obtain the loglikelihood ratio statistic in a closed form. In \cref{sec:minimax}, we characterize the information-theoretic limit to distinguishing the two models, and outline two regimes on the boundary of distinguishability. Then in \cref{sec:local-asymp}, we consider local alternative sequences in the two aforementioned regimes and establish the asymptotic distribution of the loglikelihood ratio. \cref{sec:limit-exp} provides a geometric perspective in terms of limit experiments. We then deal with non-uniformity issue of asymptotic distributions in \cref{sec:envelope} by introducing a family of envelope distributions. Next in \cref{sec:procedure} we propose model selection procedures with a uniform error guarantee. In \cref{sec:simu}, we compare the performance of several methods through simulation studies. We present a realistic example in \cref{sec:real-data} on inferring the American occupational structure. Finally some discussions are given in \cref{sec:discussion}.

\section{Maximum likelihood in a trivariate Gaussian model} \label{sec:mle}
The log-likelihood of a Gaussian graphical model under sample size $n$ \citep[Chap. 5]{lauritzen1996graphical} is
\begin{equation}
\ell_n(\Sigma) = \frac{n}{2} (- \log |\Sigma| - \Tr(S_n \Sigma^{-1})), \label{eqs:ll}
\end{equation}
where $S_n$ is the sample covariance computed with respect to mean zero (i.e., the scatter matrix divided by $n$). A model can be scored by its log-likelihood maximized within the model contrasted against the \textit{saturated} model
\begin{equation}
\lambda_n^{(i)} := 2\left (\sup_{\Sigma \in \Theta} \ell_n(\Sigma) - \sup_{\Sigma \in \Theta_i} \ell_n(\Sigma) \right ) \geq 0
\end{equation}
for $i=0,1$, which is the quantity considered in nested model selection. 
The saturated model attains maximal likelihood when $\Sigma = S_n$, yielding
\begin{equation*}
\begin{split}
\ell_{n}^{\text{sat}} &:= \sup_{\Sigma \in \Theta} \ell_n(\Sigma)\\
&= -\frac{n}{2} \left(\log \left({s}_{11} {s}_{22} {s}_{33} + 2 {s}_{12} {s}_{23} {s}_{13}- {s}_{11} {s}_{23}^2- {s}_{13}^2 {s}_{22} - {s}_{12}^2 {s}_{33}\right)+3 \right ).
\end{split}
\end{equation*}

To contrast $\mathcal{M}_0$ and $\mathcal{M}_1$, we instead consider 
\begin{equation}
\begin{split}
\lambda_n^{(0:1)} &:= \lambda_n^{(1)} - \lambda_n^{(0)} \\
& = 2 \left ( \sup_{\Sigma \in \Theta_0} \ell_n(\Sigma) - \sup_{\Sigma \in \Theta_1} \ell_n(\Sigma) \right ) = 2 \left(\ell_n(\hat{\Sigma}_n^{(0)}) - \ell_n(\hat{\Sigma}_n^{(1)}) \right),
\end{split}
\end{equation}
where $\hat{\Sigma}_n^{(0)}$ and $\hat{\Sigma}_n^{(1)}$ are MLEs within the two models. Intuitively, a positive value of $\lambda_n^{(0:1)}$ prefers $\mathcal{M}_0$, and a negative value prefers $\mathcal{M}_1$. 

\subsection{MLE within $\mathcal{M}_0$} By $X_1 \indep X_2$, we can factorize the likelihood of $\mathcal{M}_0$ as 
\begin{equation*}
\begin{split}
p(X_1, X_2, X_3) &= p(X_1) p(X_2) p(X_3 \mid X_1, X_2) \\
&= \N(X_1; 0, \sigma_{11}) \N(X_2; 0, \sigma_{22}) \N(X_3; \beta_{32 \cdot 1} X_1 + \beta_{31 \cdot 2} X_2, \sigma_{33 \cdot 12}).
\end{split}
\end{equation*}
where the parameters $\sigma_{11}, \sigma_{22}, \beta_{32 \cdot 1}, \beta_{31 \cdot 2}, \sigma_{33 \cdot 12}$ are variation independent \citep[Chap. 10.2]{barndorff2014information}. The MLEs for them are given by 
\begin{equation}
\hat{\sigma}_{11}^{(0)} = s_{11}, \quad \hat{\sigma}_{22}^{(0)} = s_{22}, \label{eqs:mle-0-i}
\end{equation}
\begin{equation*}
\quad \hat{\beta}_{32 \cdot 1}^{(0)} = \frac{{s}_{22} {s}_{13}-{s}_{12} {s}_{23}}{{s}_{11} {s}_{22}-{s}_{12}^2}, \quad \hat{\beta}_{31 \cdot 2}^{(0)} = \frac{s_{11} s_{23}-s_{12} s_{13}}{s_{11} s_{22}-s_{12}^2},
\end{equation*}
and
\begin{equation*}
\hat{\sigma}_{33 \cdot 12}^{(0)} = s_{33} - \frac{s_{22} s_{13}^2-2 s_{12} s_{23} s_{13}+s_{11} s_{23}^2}{s_{11} s_{22} - s_{12}^2}.
\end{equation*}
Mapping them back to the original parameters via relations $\beta_{32 \cdot 1} = \sigma_{13} / \sigma_{11}$, $\beta_{31 \cdot 2} = \sigma_{23} / \sigma_{22}$ and $\sigma_{33 \cdot 12} =  \sigma_{33} - \sigma_{13}^2 / \sigma_{11} - \sigma_{23}^2 / \sigma_{22}$, in addition to \cref{eqs:mle-0-i} we have the MLEs as
\begin{equation}
\hat{\sigma}_{13}^{(0)} = \frac{{s}_{11} \left({s}_{22} {s}_{13}-{s}_{12} {s}_{23}\right)}{{s}_{11} {s}_{22}-{s}_{12}^2}, \quad {\hat{\sigma}}_{23}^{(0)} = \frac{s_{22} \left(s_{11} s_{23}-s_{12} s_{13}\right)}{s_{11} s_{22}-s_{12}^2} \label{eqs:mle-0-ii}
\end{equation}
and 
\begin{equation}
\hat{\sigma}_{33}^{(0)} = {s}_{33}-\frac{2 {s}_{12} \left({s}_{12} {s}_{13}-{s}_{11} {s}_{23}\right) \left({s}_{12} {s}_{23}-{s}_{13} {s}_{22}\right)}{\left({s}_{11} {s}_{22} - {s}_{12}^2\right){}^2}. \label{eqs:mle-0-iii}
\end{equation}

This derivation is essentially the same as executing the iterative conditional fitting algorithm of \citet{chaudhuri2007estimation} in the order of $X_1,X_2,X_3$. 
Plugging \cref{eqs:mle-0-i,eqs:mle-0-ii,eqs:mle-0-iii} into \cref{eqs:ll}, we have the following \emph{closed-form} expression of maximized log-likelihood of $\model_0$
\begin{equation}
\ell_n^{(0)} = -\frac{n}{2} \left[ \log \left({s}_{11} {s}_{22} \left(\frac{{s}_{22} \hat{s}_{13}^2-2 {s}_{12} {s}_{23} {s}_{13}+{s}_{11} {s}_{23}^2}{{s}_{12}^2-{s}_{11} {s}_{22}}+{s}_{33}\right)\right) + 3 \right]. \label{eqs:ll-0}
\end{equation}

\subsection{MLE within $\mathcal{M}_1$} The MLE within $\mathcal{M}_1$ in the covariance parametrization is simpler. By writing $\sigma_{12} = \sigma_{13} \sigma_{23} / \sigma_{33}$ and simplifying the score condition, we obtain
\begin{equation}
\hat{\sigma}_{11}^{(1)} = s_{11}, \quad  \hat{\sigma}_{22}^{(1)} = s_{22}, \quad  \hat{\sigma}_{33}^{(1)} = s_{33}, \quad  \hat{\sigma}^{(1)}_{13} = s_{13}, \quad \hat{\sigma}_{23}^{(1)} = s_{23}, \label{eqs:mle-1}
\end{equation}
all of which are their sample counterparts. Plugging into \cref{eqs:ll}, we have
\begin{equation}
\ell_{n}^{(1)} = -\frac{n}{2} \left[ \log \left(\frac{\left(s_{13}^2-s_{11} s_{33}\right) \left(s_{23}^2-s_{22} s_{33}\right)}{s_{33}}\right) + 3 \right]. \label{eqs:ll-1}
\end{equation}

\subsection{Likelihood ratio} Finally, $\mathcal{M}_0$ and $\mathcal{M}_1$ are contrasted with
\begin{multline}
\lambda_n^{(0:1)} = 2(\ell_n^{(0)} - \ell_n^{(1)})  = n \log \left(\frac{\left(s_{13}^2-s_{11} s_{33}\right) \left(s_{23}^2-s_{22} s_{33}\right)}{s_{33}}\right)- \\
n\log \left(s_{11} s_{22} \left(\frac{s_{22} s_{13}^2-2 s_{12} s_{23} s_{13}+s_{11} s_{23}^2}{s_{12}^2-s_{11} s_{22}}+s_{33}\right)\right). \label{eqs:lambda-01}
\end{multline}
 \section{Optimal error} \label{sec:minimax}
We study the information-theoretic limit to distinguishing the two models. Specifically, consider two sequences of sampling distributions --- one within $\model_0$ and the other within $\model_1$, as they approach the same limit in $\model_0 \cap \model_1$. Let $P_n$ be the sequence in $\model_0$ under covariance $\Sigma_n^{(0)} \in \model_0 \setminus \model_1$, and let $Q_n$ be the sequence in $\model_1$ under covariance $\Sigma_n^{(1)} \in \model_1 \setminus \model_0$. Further, let $P_n^{n}$ and $Q_n^n$ be the product measures of $n$ independent copies of $P_n$ and $Q_n$ respectively.

The fundamental limit to distinguishing two distributions $P$ and $Q$ is characterized by their total variation distance $\TV(P, Q) := \sup_{A} \{P(A) - Q(A)\}$. We have the following classical result on testing two simple hypotheses, where the minimum total error is achieved by the likelihood ratio test.

\begin{lemma}[Theorem 13.1.1 of \citet{lehmann2006testing}] \label{lem:TV}
For testing $H_0: X \sim P$ versus $H_1: X \sim Q$, the minimum sum of type-I and type-II errors is $1 - \TV(P, Q)$. 
\end{lemma}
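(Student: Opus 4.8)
The plan is to cast the problem as a pointwise optimization over (randomized) tests and then recognize the optimal value as a total variation distance. First I would represent any test by a measurable function $\phi : \allx \to [0,1]$, where $\phi(x)$ is the probability of selecting $H_1$ upon observing $x$; deterministic tests correspond to $\{0,1\}$-valued $\phi$. The type-I error is then $\E_P[\phi(X)] = \int \phi \dd P$ and the type-II error is $\E_Q[1 - \phi(X)] = 1 - \int \phi \dd Q$, so the quantity to minimize is
\begin{equation*}
\E_P[\phi] + \E_Q[1-\phi] = 1 + \int \phi \,(\dd P - \dd Q).
\end{equation*}

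Next I would fix a common dominating measure $\nu$ (for instance $\nu = P + Q$, or Lebesgue measure in the Gaussian setting) with densities $p = \dd P / \dd \nu$ and $q = \dd Q / \dd \nu$, rewriting the objective as $1 + \int \phi\,(p - q)\dd\nu$. Since $\phi$ ranges freely over $[0,1]$ at each point, the integral is driven as negative as possible by choosing $\phi(x) = 1$ wherever $q(x) > p(x)$ and $\phi(x) = 0$ wherever $q(x) < p(x)$, with the value on $\{p = q\}$ immaterial. This is exactly the likelihood ratio test, and it is a bona fide deterministic test, so the infimum is attained. The minimized objective becomes $1 - \int (q - p)^+ \dd \nu$, where $(\cdot)^+$ denotes positive part.

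Finally I would identify $\int (q-p)^+ \dd\nu$ with $\TV(P,Q)$. Taking $A^\star = \{q > p\}$ gives $Q(A^\star) - P(A^\star) = \int(q-p)^+\dd\nu$, and for any event $A$ one has $Q(A) - P(A) = \int_A (q-p)\dd\nu \le \int(q-p)^+\dd\nu$, so the supremum defining total variation is attained at $A^\star$; by symmetry, using $\int(p-q)\dd\nu = 0$, the same value equals $\sup_A\{P(A) - Q(A)\}$, matching the paper's definition of $\TV(P,Q)$. Combining these steps yields the minimum total error $1 - \TV(P,Q)$. The computation itself is elementary; the only point requiring care is the interchange between the variational problem over tests and the pointwise rule --- specifically, checking that the pointwise-optimal $\phi$ is measurable and admissible and that the optimal value is insensitive to the choice of dominating measure $\nu$ and to the behavior on the tie set $\{p = q\}$. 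This is the step I would treat as the main, if modest, obstacle.
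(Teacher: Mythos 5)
Your argument is correct and complete: the reduction of the total error to $1+\int\phi\,(\dd P-\dd Q)$ over randomized tests, the pointwise minimization yielding the likelihood ratio test, and the identification of $\int (q-p)^{+}\dd\nu$ with $\TV(P,Q)$ via $\int(p-q)\dd\nu=0$ are all sound, and the measurability of $\{q>p\}$ is immediate. The paper itself gives no proof of this lemma --- it is imported verbatim as Theorem 13.1.1 of \citet{lehmann2006testing} --- and your derivation is essentially the standard one found there, so there is nothing to reconcile.
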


The optimal error above does not permit a tractable formula. The analysis for a product measure is more tractable in terms of the Hellinger squared distance $H^2(P,Q) :=  (1/2) \int (p^{1/2} - q^{1/2})^2 \dd \mu $, for which it holds that
\begin{equation}
H^2(P_n^n, Q_n^n) = 1 - \left\{1 - H^2(P_n, Q_n) \right\}^n. \label{eqs:hellinger-tensor}
\end{equation}
The total variation is related to Hellinger by Le~Cam's inequality \citep[Lemma 2.3]{Tsybakov2009book}
\begin{equation}
H^2(P_n^n, Q_n^n) \leq \TV(P_n^n, Q_n^n) \leq H(P_n^n, Q_n^n) \left \{ 2 - H^2(P_n^n, Q_n^n) \right \}^{1/2}. \label{eqs:sandwich}
\end{equation}

\begin{lemma}[see also Theorem 13.1.3 of \citet{lehmann2006testing}] \label{lem:stabilized-error} It holds that
\begin{equation*}
1 - \TV(P_n^n, Q_n^n) \rightarrow \begin{cases}
0, &\quad H^2(P_n, Q_n) = \omega(n^{-1}) \\
1, &\quad H^2(P_n, Q_n) = o(n^{-1})
\end{cases}.
\end{equation*}
And when $n H^2(P_n, Q_n) \rightarrow h > 0$, it holds that
\begin{equation*}
\begin{split}
0 < 1 - \{1 - \exp(-2h) \}^{1/2} &\leq \liminf_{n \rightarrow \infty} \left\{ 1 - \TV(P_n^n, Q_n^n) \right\}\\
&\leq \limsup_{n \rightarrow \infty} \left\{ 1 - \TV(P_n^n, Q_n^n) \right\} \leq \exp(-h) < 1.
\end{split}
\end{equation*}
\end{lemma}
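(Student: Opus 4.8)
The plan is to reduce everything to the Hellinger distance through the tensorization identity \cref{eqs:hellinger-tensor} and then sandwich the total variation using Le~Cam's inequality \cref{eqs:sandwich}. Since the minimum total error equals $1 - \TV(P_n^n, Q_n^n)$ by \Cref{lem:TV}, it suffices to track $\TV(P_n^n, Q_n^n)$. Writing $h_n := H^2(P_n, Q_n) \in [0,1]$, identity \cref{eqs:hellinger-tensor} reads $H^2(P_n^n, Q_n^n) = 1 - (1 - h_n)^n$, so the whole argument amounts to locating the limit of $(1 - h_n)^n$ under each rate assumption on $n h_n$ and then pushing that limit through \cref{eqs:sandwich}.

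For the two extreme rates I would avoid any expansion and use elementary inequalities, since here $h_n$ need not tend to $0$. When $h_n = \omega(n^{-1})$, the bound $1 - x \leq e^{-x}$ gives $(1 - h_n)^n \leq e^{-n h_n} \to 0$, hence $H^2(P_n^n, Q_n^n) \to 1$; the left inequality $H^2 \leq \TV$ in \cref{eqs:sandwich} then forces $\TV(P_n^n, Q_n^n) \to 1$, i.e. $1 - \TV(P_n^n, Q_n^n) \to 0$. When $h_n = o(n^{-1})$, Bernoulli's inequality $(1 - h_n)^n \geq 1 - n h_n$ together with $(1 - h_n)^n \leq 1$ gives $(1 - h_n)^n \to 1$, so $H^2(P_n^n, Q_n^n) \to 0$; the right inequality $\TV \leq H(2 - H^2)^{1/2}$ then yields $\TV(P_n^n, Q_n^n) \to 0$, i.e. $1 - \TV(P_n^n, Q_n^n) \to 1$.

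For the boundary rate $n h_n \to h > 0$ one has $h_n \to 0$, so the logarithmic expansion is licit: $n \log(1 - h_n) = -n h_n - \tfrac{1}{2} n h_n^2 + o(n h_n^2) \to -h$, the quadratic term being negligible since $n h_n^2 = (n h_n)\, h_n \to h \cdot 0 = 0$. Hence $(1 - h_n)^n \to e^{-h}$ and $H^2(P_n^n, Q_n^n) \to 1 - e^{-h}$. Substituting this limit into the two sides of \cref{eqs:sandwich} completes the proof: the left inequality gives $\liminf_n \TV(P_n^n, Q_n^n) \geq 1 - e^{-h}$, hence $\limsup_n \{1 - \TV(P_n^n, Q_n^n)\} \leq e^{-h} < 1$; the right inequality gives $\limsup_n \TV(P_n^n, Q_n^n) \leq \{(1 - e^{-h})(1 + e^{-h})\}^{1/2} = (1 - e^{-2h})^{1/2}$, hence $\liminf_n \{1 - \TV(P_n^n, Q_n^n)\} \geq 1 - (1 - e^{-2h})^{1/2} > 0$.

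There is no genuine obstacle here: the statement is essentially bookkeeping once \cref{eqs:hellinger-tensor,eqs:sandwich} and \Cref{lem:TV} are granted. The only step warranting care is the boundary case, where one must confirm that the quadratic contribution $n h_n^2$ to the logarithm vanishes, so that $(1 - h_n)^n$ converges to exactly $e^{-h}$ and the resulting bounds are the clean, rate-free constants displayed in the statement; the strict positivity of both endpoints is then immediate from $h > 0$.
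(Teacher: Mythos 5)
Your proposal is correct and follows essentially the same route as the paper's proof: reduce to $H^2(P_n^n,Q_n^n)=1-(1-H^2(P_n,Q_n))^n$ via the tensorization identity, then push the resulting limit of $(1-h_n)^n$ through the two sides of Le~Cam's inequality to get the upper bound $\exp(-h)$ from $1-H^2$ and the lower bound $1-\{1-\exp(-2h)\}^{1/2}$ from $1-H(2-H^2)^{1/2}$. The only difference is cosmetic: you justify the limit of $(1-h_n)^n$ with elementary inequalities and an explicit check that $nh_n^2\to 0$, where the paper simply writes $\exp[n\log\{1-H^2(P_n,Q_n)\}]$ and states the limits.
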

\begin{proof}
Using \cref{eqs:hellinger-tensor} and \cref{eqs:sandwich}, we have
\begin{equation*}
\begin{split}
1 - \TV(P_n^n, Q_n^n) \leq 1 - H^2(P_n^n, Q_n^n) &= \left\{1 - H^2(P_n, Q_n) \right\}^n \\
&= \exp \left [n \log \left\{1 - H^2(P_n,Q_n) \right \} \right ].
\end{split}
\end{equation*}
It follows that
\begin{equation*}
\begin{split}
\limsup_{n \rightarrow \infty} \left\{ 1 - \TV(P_n^n, Q_n^n) \right\} &\leq \limsup_{n \rightarrow \infty} \exp \left [n \log \left\{1 - H^2(P_n,Q_n) \right \} \right ] \\
& = \begin{cases} 0, &\quad H^2(P_n, Q_n) = \omega(n^{-1}) \\
\exp(-h), &\quad n H^2(P_n, Q_n) \rightarrow h > 0
\end{cases}.
\end{split}
\end{equation*}
Similarly, we also have
\begin{equation*}
\begin{split}
\liminf_{n \rightarrow \infty} \left\{ 1 - \TV(P_n^n, Q_n^n) \right\} & \geq \liminf_{n \rightarrow \infty} 1 - H(P_n^n, Q_n^n) \left \{ 2 - H^2(P_n^n, Q_n^n) \right \}^{1/2}  \\
& = \begin{cases} 1, & \quad  H^2(P_n, Q_n) = o(n^{-1}) \\
1 - \{1 - \exp(-2h) \}^{1/2}, &\quad  n H^2(P_n, Q_n) \rightarrow h > 0
\end{cases}.
\end{split}
\end{equation*}
The proof is finished by combining the previous two displays with the fact that $\liminf_{n} \left\{ 1 - \TV(P_n^n, Q_n^n) \right\} \leq \limsup_{n} \left\{ 1 - \TV(P_n^n, Q_n^n) \right\}$ and noting $\TV \in [0,1]$.
\end{proof}

\begin{corollary}
Under $n H^2(P_n, Q_n) \rightarrow h > 0$, the optimal power of an asymptotic $\alpha$-level procedure satisfies
\begin{equation}
1 - \exp(-h) \leq \text{optimal asymptotic power} \leq \alpha + \{1 - \exp(-2h)\}^{1/2}. \label{eqs:bound-power}
\end{equation}
\end{corollary}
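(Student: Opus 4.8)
The plan is to read the optimal power as the Neyman--Pearson optimum $\beta_n^\ast(\alpha) := \sup\{\E_{Q_n^n}\phi : \E_{P_n^n}\phi \le \alpha\}$, the supremum of power over all (possibly randomized) tests $\phi$ of $H_0 : X \sim P_n^n$ against $H_1 : X \sim Q_n^n$ whose type-I error is at most $\alpha$, and to obtain both inequalities directly from \cref{lem:TV,lem:stabilized-error}.

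For the upper bound I would fix any feasible $\phi$ and apply \cref{lem:TV} to the pair $(P_n^n, Q_n^n)$: the sum of its type-I and type-II errors is at least $1 - \TV(P_n^n, Q_n^n)$, i.e. $\E_{P_n^n}\phi + \{1 - \E_{Q_n^n}\phi\} \ge 1 - \TV(P_n^n, Q_n^n)$. Rearranging bounds the power by $\E_{Q_n^n}\phi \le \TV(P_n^n,Q_n^n) + \E_{P_n^n}\phi \le \TV(P_n^n,Q_n^n) + \alpha$, and taking the supremum over feasible $\phi$ gives $\beta_n^\ast(\alpha) \le \alpha + \TV(P_n^n,Q_n^n)$. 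The claim then follows by passing to $\limsup_n$ and invoking the lower estimate on $1 - \TV$ from \cref{lem:stabilized-error}, which is exactly $\limsup_n \TV(P_n^n,Q_n^n) \le \{1 - \exp(-2h)\}^{1/2}$.

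For the lower bound I would not bound the supremum abstractly but instead exhibit a single near-optimal test, the likelihood ratio (Bayes) test $\phi_n^\ast := \mathbbm{1}\{q_n^n > p_n^n\}$, which attains the equality in \cref{lem:TV}. Its errors satisfy $\E_{P_n^n}\phi_n^\ast + \{1 - \E_{Q_n^n}\phi_n^\ast\} = 1 - \TV(P_n^n,Q_n^n)$, so its power is $\E_{Q_n^n}\phi_n^\ast = \TV(P_n^n,Q_n^n) + \E_{P_n^n}\phi_n^\ast \ge \TV(P_n^n,Q_n^n)$, and \cref{lem:stabilized-error} gives $\liminf_n \TV(P_n^n,Q_n^n) \ge 1 - \exp(-h)$, so the power of $\phi_n^\ast$ is asymptotically at least $1-\exp(-h)$.

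The step I expect to be the main obstacle is reconciling this power guarantee with the prescribed level. The useful observation is that the type-I error of $\phi_n^\ast$ is controlled by the same quantity, $\E_{P_n^n}\phi_n^\ast \le 1 - \TV(P_n^n,Q_n^n)$, whose $\limsup$ is at most $\exp(-h)$ by \cref{lem:stabilized-error}. Hence $\phi_n^\ast$ is itself an asymptotic $\alpha$-level procedure whenever $\alpha \ge \exp(-h)$, and in that regime $\liminf_n \beta_n^\ast(\alpha) \ge \liminf_n \E_{Q_n^n}\phi_n^\ast \ge 1 - \exp(-h)$; since $\exp(-h)$ is precisely the order of the minimal total error in the indistinguishable regime, this is the pertinent range. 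Care is genuinely needed here, because for a fixed level strictly below $\exp(-h)$ the likelihood ratio test is no longer feasible and the optimal power can fall below $1-\exp(-h)$. The lower bound is therefore best read as the power of the optimal (total-error-minimizing) test rather than as a guarantee at an arbitrarily small $\alpha$, and I would make this scope explicit, using monotonicity of $\beta_n^\ast(\cdot)$ in the level to identify the total-variation-optimal operating point as the relevant extreme point of the ROC.
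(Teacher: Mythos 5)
Your proposal is correct and follows essentially the same route as the paper, which likewise derives both bounds from \cref{lem:TV} and \cref{lem:stabilized-error} via the identity $(1-\text{power}) + \text{type-I error} = 1 - \TV(P_n^n,Q_n^n)$ for the total-error-minimizing likelihood ratio test, "for type-I error asymptotically between 0 and $\alpha$." Your closing caveat --- that the lower bound $1-\exp(-h)$ is really a statement about the total-variation-optimal operating point and need not hold for a fixed level $\alpha$ below the asymptotic type-I error of that test --- is a genuine refinement of a point the paper's one-line proof glosses over, and making that scope explicit is the right call.
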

\begin{proof}
This directly follows from \cref{lem:stabilized-error} since $(1 - \text{optimal power}) + \text{type-I error} = 1 -  \TV(P_n^n, Q_n^n) $ for type-I error asymptotically between 0 and $\alpha$, and then passing to the limit.  
\end{proof}

By \cref{lem:stabilized-error}, the asymptotic error converges to zero (exponentially fast) if $P_n$ and $Q_n$ are separated by a distance that is decreasing more slowly than rate $n^{-1/2}$. For example, when $P_n = P$, $Q_n = Q$ are \emph{fixed} distributions from which we observe $n$ independent samples, that is, when $P_n^n = P^{n}$ and $Q_n^n = Q^n$. 
The analysis above shows that the ability to differentiate $P_n$ and $Q_n$ based on $n$ samples depends on the distance between $P_n$ and $Q_n$. The consideration of $P_n$ and $Q_n$ as $n \rightarrow \infty$ is necessitated by the development of asymptotic results that are applicable in a specific analysis with a fixed $n$. 
In particular, here we want to investigate what happens when the sample size is small compared to the signal strength, or equivalently, when signal strength is weak under a given sample size. This is modeled by the regime that yields a non-trivial optimal error strictly between 0 and 1. By \cref{lem:stabilized-error}, we need to choose sequences $\Sigma_n^{(0)}$ and $\Sigma_n^{(1)}$ such that $H^2 \left(P_{\Sigma_n^{(0)}}, P_{\Sigma_n^{(1)}} \right) \asymp n^{-1}$. More specifically, we choose $\Sigma_n^{(1)}$ that is the most difficult to distinguish from $\Sigma_n^{(0)}$. That is, we choose $\Sigma_n^{(1)}$ to minimize $\KL \left(P_{\Sigma_n^{(0)}} \| P_{\Sigma_n^{(1)}} \right)$, i.e., $\Sigma_n^{(1)}$ is the MLE projection of $\Sigma_n^{(0)}$ in $\model_1$ by \cref{eqs:mle-1}. The two sequences take the form of
\begin{equation}
\begin{split}
\Sigma_n^{(0)} &= \begin{pmatrix}
\sigma_{11,n} & 0 & \rho_{13,n} \sqrt{\sigma_{11,n} \sigma_{33,n}} \\
0  & \sigma_{22,n} & \rho_{23,n} \sqrt{\sigma_{11,n} \sigma_{33,n}} \\
\rho_{13,n} \sqrt{\sigma_{11,n} \sigma_{33,n}}  & \rho_{23,n} \sqrt{\sigma_{11,n} \sigma_{33,n}} & \sigma_{33,n}
\end{pmatrix},\\
\Sigma_n^{(1)} &= \begin{pmatrix}
\sigma_{11,n} & \rho_{13,n}\rho_{23,n} \sqrt{\sigma_{11,n} \sigma_{22,n}} & \rho_{13,n} \sqrt{\sigma_{11,n} \sigma_{33,n}} \\
\rho_{13,n}\rho_{23,n} \sqrt{\sigma_{11,n} \sigma_{22,n}}  & \sigma_{22,n} & \rho_{23,n} \sqrt{\sigma_{22,n} \sigma_{33,n}} \\
\rho_{13,n} \sqrt{\sigma_{11,n} \sigma_{33,n}}  & \rho_{23,n} \sqrt{\sigma_{22,n} \sigma_{33,n}} & \sigma_{33,n}\end{pmatrix}.
\end{split}
\end{equation}
Both of them converge to $\Sigma^{\ast} \in \model_0 \cap \model_1$ as $n \rightarrow \infty$. 
We assume the variances $\sigma_{ii,n} \rightarrow \sigma_{ii} > 0$ for $i=1,2,3$. For $H^2 \left(P_{\Sigma_n^{(0)}}, P_{\Sigma_n^{(1)}} \right) \rightarrow 0$, it is necessary that either (or both) $\rho_{13,n}$ and $\rho_{23,n}$ converges to zero. The squared Hellinger distance is calculated as 
\begin{align}
H^2 \left(P_{\Sigma_n^{(0)}}, P_{\Sigma_n^{(1)}} \right) &= 1 - \frac{ |{\Sigma}_n^{(0)}|^{1/4} |{\Sigma}_n^{(1)}|^{1/4}}{|({\Sigma}_n^{(0)} + {\Sigma}_n^{(1)})/2|^{1/2}} \nonumber \\
& =\begin{cases}
\rho_{13,n}^2 \rho_{23,n}^2 / 8 + O(\rho_{13,n}^4 + \rho_{23,n}^4), \quad &\rho_{13,n}, \rho_{23,n} \rightarrow 0 \\
\rho_{23}^2 (1 - \rho_{23}^2)^{-1} \rho_{13,n}^2 / 8 + O(\rho_{13,n}^4), \quad &\rho_{13,n} \rightarrow 0, \rho_{23,n} \rightarrow \rho_{23} \neq 0 \\
\rho_{13}^2 (1 - \rho_{13}^2)^{-1} \rho_{23,n}^2 / 8 + O(\rho_{23,n}^4), \quad &\rho_{23,n} \rightarrow 0, \rho_{13,n} \rightarrow \rho_{13} \neq 0
\end{cases}.
\label{eqs:minimax-helliger}
\end{align}

The calculation reveals that $H^2(P_{\Sigma_n^{(0)}}, Q_{\Sigma_n^{(1)}}) \asymp 1/n$ if and only if $\rho_{13,n} \rho_{23,n} \asymp n^{-1/2}$. This entails two distinct regimes. 

\begin{description}
\item[The weak-strong regime] Between $\rho_{13,n}$ and $\rho_{23,n}$, one (the weak edge) converges to zero at $n^{-1/2}$ rate, and the other (the strong edge) converges to a non-zero limit $\rho \in (-1,1)$. The limiting model is on $\model_0 \cap \model_1 \setminus \model_{\text{sing}}$, namely one of the axes excluding the origin in \cref{fig:geometry}.
\item[The weak-weak regime] $\rho_{13,n}, \rho_{23,n} \rightarrow 0$ and $\sqrt{n} \rho_{13,n} \rho_{23,n} \rightarrow \delta \neq 0$. The limiting model is on $\model_{\text{sing}}$, namely the origin in \cref{fig:geometry}. 
\end{description}

\begin{remark}
The result can be rephrased as the sample size required to distinguish $\model_0$ and $\model_1$.
Consider distinguishing $\model_0$ and $\model_1$ in a Euclidean $m^{-1/2}$-neighborhood of $\Sigma^{\ast} \in \model_0 \cap \model_1$ as $m \rightarrow \infty$. The sample size required is $m^2$ if $\Sigma^{\ast} \in \model_{\text{sing}}$, and $m$ if $\Sigma^{\ast} \notin \model_{\text{sing}}$. This phenomenon is described by \citet{evans2018model} in terms of equivalence of local geometry. $\model_0$ and $\model_1$ are 1-equivalent at $\Sigma^{\ast} \in \model_{\text{sing}}$ in the sense that their tangent cones coincide; and they are 1-near-equivalent at $\Sigma^{\ast} \notin \model_{\text{sing}}$ in the sense that they have distinct tangent cones. See \citet[Theorem 2.8]{evans2018model}.
\end{remark}

\begin{proposition} \label{prop:complexity}
In testing $\mathcal{M}_0$ versus $\mathcal{M}_1$, the sample complexity required is
\begin{equation}
\begin{cases}
n = \omega \left(\frac{1}{\rho_{13}^2 \rho_{23}^2} \right), &\quad \text{for consistent model selection}\\
n \asymp \left(\frac{1}{\rho_{13}^2 \rho_{23}^2} \right), &\quad \text{for asymptotic total error $\in (0,1)$}
\end{cases}.
\end{equation}
\end{proposition}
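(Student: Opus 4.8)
The plan is to obtain this as a corollary of \Cref{lem:TV,lem:stabilized-error} together with the Hellinger expansion \cref{eqs:minimax-helliger}, taking $P_n$ and $Q_n$ to be the closest pair $P_{\Sigma_n^{(0)}}$ and $P_{\Sigma_n^{(1)}}$ constructed above. By \Cref{lem:TV} the smallest attainable total (type-I plus type-II) error on $n$ samples equals $1 - \TV(P_n^n, Q_n^n)$; since both errors are nonnegative, this quantity tends to $0$ exactly when a consistent procedure (e.g.\ the likelihood ratio test, which attains the minimum) exists, and it stays in $(0,1)$ precisely when the optimal error converges to a limit bounded away from both $0$ and $1$. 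So it suffices to control $1 - \TV(P_n^n, Q_n^n)$.

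First I would record that $H^2(P_n, Q_n) \asymp \rho_{13,n}^2 \rho_{23,n}^2$ in every case of \cref{eqs:minimax-helliger}. The weak-weak case is immediate from the leading term $\rho_{13,n}^2\rho_{23,n}^2/8$. In a weak-strong case, say $\rho_{23,n} \to \rho \in (-1,1)\setminus\{0\}$, the leading term is $\frac{\rho^2}{8(1-\rho^2)}\rho_{13,n}^2(1+o(1))$ while $\rho_{13,n}^2\rho_{23,n}^2 = \rho^2\rho_{13,n}^2(1+o(1))$, so their ratio tends to the positive finite constant $1/\{8(1-\rho^2)\}$; here I use that the limiting variances are positive and $\rho$ is bounded away from $\pm 1$. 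Consequently $H^2(P_n,Q_n) \asymp n^{-1}$ if and only if $\rho_{13,n}^2\rho_{23,n}^2 \asymp n^{-1}$, and the little-$o$/little-$\omega$ comparisons against $n^{-1}$ transfer likewise.

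Substituting this rate into \Cref{lem:stabilized-error} finishes both claims. For consistency, $1 - \TV(P_n^n,Q_n^n)\to 0$ holds iff $H^2(P_n,Q_n) = \omega(n^{-1})$, which by the previous step is $n\rho_{13,n}^2\rho_{23,n}^2 \to \infty$, i.e.\ $n = \omega(1/(\rho_{13}^2\rho_{23}^2))$. For a nontrivial error, the regime in which the optimal error is pinned strictly inside $(0,1)$ by the two-sided bound of \Cref{lem:stabilized-error} is $nH^2(P_n,Q_n)\to h>0$, which translates to $n\rho_{13,n}^2\rho_{23,n}^2 \asymp 1$, i.e.\ $n \asymp 1/(\rho_{13}^2\rho_{23}^2)$.

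The one place I would be careful is the $\asymp$ in the second case: the condition $n \asymp 1/(\rho_{13}^2\rho_{23}^2)$ only forces $nH^2(P_n,Q_n)$ to lie in a fixed compact subinterval of $(0,\infty)$ rather than to converge, so I would argue along subsequences. Along any subsequence with $nH^2(P_n,Q_n)\to h\in(0,\infty)$, \Cref{lem:stabilized-error} sandwiches the error between $1-\{1-e^{-2h}\}^{1/2}>0$ and $e^{-h}<1$, while boundedness of $nH^2$ away from $0$ and $\infty$ excludes the degenerate limits $0$ and $1$. No new estimate beyond \cref{eqs:minimax-helliger} is needed; the rest is a mechanical substitution into the information-theoretic bounds already established.
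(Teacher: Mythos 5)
Your argument is correct and is essentially the paper's own: the proposition is presented as an immediate consequence of \cref{lem:stabilized-error} and the Hellinger expansion \cref{eqs:minimax-helliger}, which is exactly the substitution you carry out (the paper gives no separate proof beyond the sentence ``The calculation reveals that $H^2 \asymp 1/n$ if and only if $\rho_{13,n}\rho_{23,n} \asymp n^{-1/2}$''). Your additional care with subsequences in the $\asymp$ case, and your explicit check that $H^2(P_n,Q_n) \asymp \rho_{13,n}^2\rho_{23,n}^2$ in each regime, simply make rigorous what the paper leaves implicit.
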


 \section{Local asymptotics} \label{sec:local-asymp}
In this section, we analyze the asymptotic distribution of the log-likelihood ratio statistic $\lambda_n^{(0:1)}=2(\ell_n^{(0)} - \ell_n^{(1)})$ under the two regimes outlined earlier. 

\subsection{The weak-strong regime}
Without loss of generality, we choose $\rho_{13,n} = \gamma / \sqrt{n}$ as the weak edge and $\rho_{23,n} \rightarrow \rho \neq 0$ as the strong edge. $\gamma \in \mathbb{R}$ characterizes the size of the local asymptotic, and is also referred to as a \emph{local parameter} \citep[Chapter 9]{van2000asymptotic}. We consider asymptotics under local sequences $\Sigma_n^{(0)}$ and $\Sigma_n^{(1)}$ approaching the limiting covariance 
\begin{equation}
\Sigma^{\ast} = \begin{pmatrix}
\sigma_{11} & 0 & 0 \\
0 & \sigma_{22} & \rho \sqrt{\sigma_{22} \sigma_{33}}  \\
0 & \rho \sqrt{\sigma_{22} \sigma_{33}}  & \sigma_{33}
\end{pmatrix} \in \model_0 \cap \model_1 \setminus \model_{\text{sing}}.
\end{equation}
We consider the following local alternatives of size $\gamma$ on the correlation scale. Again, $\Sigma_n^{(1)}$ is the KL-projection (i.e., MLE-projection) of $\Sigma_n^{(0)}$. 
\begin{equation}
\Sigma^{(0)}_n = \begin{pmatrix} 
\sigma_{11} & 0 & \gamma \sqrt{\sigma_{11} \sigma_{33}} / \sqrt{n} \\
0 & \sigma_{22} & \rho \sqrt{\sigma_{22} \sigma_{33}} \\
\gamma \sqrt{\sigma_{11} \sigma_{33}} / \sqrt{n} & \rho \sqrt{\sigma_{22} \sigma_{33}} & \sigma_{33}
\end{pmatrix} \in \model_0 \setminus \model_1, \label{eqs:sigma-0-ws}
\end{equation}
\begin{equation}
\Sigma^{(1)}_n = \begin{pmatrix} 
\sigma_{11} & \gamma \rho \sqrt{\sigma_{11} \sigma_{22}} / \sqrt{n} & \gamma \sqrt{\sigma_{11} \sigma_{33}} / \sqrt{n} \\
\gamma \rho \sqrt{\sigma_{11} \sigma_{22}} / \sqrt{n} & \sigma_{22} & \rho \sqrt{\sigma_{22} \sigma_{33}} \\
\gamma \sqrt{\sigma_{11} \sigma_{33}} / \sqrt{n} & \rho \sqrt{\sigma_{22} \sigma_{33}} & \sigma_{33}
\end{pmatrix} \in \model_1 \setminus \model_0. \label{eqs:sigma-1-ws}
\end{equation}

At the limit, both models are correct (intersection); see \Cref{fig:local-schematic}. However, the sequence approaches the limit \textit{only} on one of the models, and the size of violation of the other model is $|\gamma|n^{-1/2}$. To ensure positive definiteness, we require $|\rho| < 1$.

\begin{proposition} \label{prop:asymp-weak-strong}
Under local alternative $\Sigma_n^{(0)}$,
\begin{equation}
\lambda_{n}^{(0:1)} \distconvto \rho \left [ \left(Z_1 + \frac{\gamma}{\sqrt{2 (1 - \rho)}} \right)^2 - \left(Z_2 + \frac{\gamma}{\sqrt{2(1+\rho)}} \right)^2 \right ]; \label{eqs:asymp-dist-01}
\end{equation}
and under local alternative $\Sigma_n^{(1)}$,
\begin{equation}
\lambda_{n}^{(0:1)} \distconvto \rho \left [ \left(Z_1 + \gamma \sqrt{\frac{1-\rho}{2}} \right)^2 - \left(Z_2 + \gamma \sqrt{\frac{1 + \rho}{2}} \right)^2 \right ],\label{eqs:asymp-dist-10}
\end{equation}
where $Z_1, Z_2$ are two independent standard normal variables. 
\end{proposition}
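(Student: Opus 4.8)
The plan is to reduce the likelihood ratio to a smooth function of two sample correlations, apply a delta-method central limit theorem, and finish by diagonalizing a difference of two squares. First I would rewrite $\lambda_n^{(0:1)}$ transparently. Writing $\hat r_{12} = s_{12}/\sqrt{s_{11}s_{22}}$ for the sample correlation and $\hat r_{12\cdot 3}$ for the sample partial correlation of $X_1, X_2$ given $X_3$, a direct computation from \cref{eqs:ll-0,eqs:ll-1} and the saturated log-likelihood---using the determinant identity $|S_n|\,s_{33} = (s_{11}s_{33}-s_{13}^2)(s_{22}s_{33}-s_{23}^2)-(s_{12}s_{33}-s_{13}s_{23})^2$---collapses the two deviances against the saturated model to $\lambda_n^{(0)} = -n\log(1-\hat r_{12}^2)$ and $\lambda_n^{(1)} = -n\log(1-\hat r_{12\cdot 3}^2)$. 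Hence $\lambda_n^{(0:1)} = n\log\{(1-\hat r_{12}^2)/(1-\hat r_{12\cdot 3}^2)\}$, a function of sample correlations alone.

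Next I would set up a joint CLT. Under the triangular array $X_1,\dots,X_n \iid \N(0,\Sigma_n)$ with $\Sigma_n \to \Sigma^\ast$, the entries of $S_n$ are asymptotically normal, and the delta method transfers this to the correlations. Because $\rho_{12}=\rho_{13}=0$ at the limit $\Sigma^\ast$, the derivatives of the normalizing factors drop out, so if $(U_0, V)$ denotes the weak limit of $\sqrt n(\hat r_{12}, \hat r_{13})$ then $(U_0,V)$ is bivariate normal with covariance $\left(\begin{smallmatrix}1 & \rho\\ \rho & 1\end{smallmatrix}\right)$---the off-diagonal $\rho$ coming from $\cov(s_{12},s_{13})\to \sigma_{11}\sigma_{23}$. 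Its mean is $\lim \sqrt n(\rho_{12,n},\rho_{13,n})$, which equals $(0,\gamma)$ under $\Sigma_n^{(0)}$ and $(\gamma\rho,\gamma)$ under $\Sigma_n^{(1)}$.

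Then I would linearize. Since all correlations are $O_P(n^{-1/2})$, a Taylor expansion of the logarithm gives $\lambda_n^{(0:1)} = n(\hat r_{12\cdot 3}^2 - \hat r_{12}^2) + o_P(1)$, and the identity $\hat r_{12\cdot 3} = (\hat r_{12} - \hat r_{13}\hat r_{23})/\sqrt{(1-\hat r_{13}^2)(1-\hat r_{23}^2)}$ passes to the limit, with $\hat r_{23}\to\rho$, as $U_1 := (U_0 - \rho V)/\sqrt{1-\rho^2}$. Thus $\lambda_n^{(0:1)} \distconvto U_1^2 - U_0^2$. Finally, setting $A = (U_0+V)/\sqrt{2(1+\rho)}$ and $B = (U_0-V)/\sqrt{2(1-\rho)}$ gives independent unit-variance normals, and a short algebraic identity yields $U_1^2 - U_0^2 = \rho(B^2 - A^2)$ exactly; substituting the means of $A$ and $B$ under each local alternative reproduces the non-centralities in \cref{eqs:asymp-dist-01,eqs:asymp-dist-10}.

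I expect the delicate point to be the remainder control rather than the algebra. Because the statistic is $n$ times a quantity vanishing to second order at $\Sigma^\ast$, the linear term must cancel exactly---which it does, as both $\hat r_{12}$ and $\hat r_{12\cdot 3}$ are centered at $O(n^{-1/2})$---and the quartic Taylor remainder must be $o_P(n^{-1})$ so that it disappears after multiplication by $n$. Justifying this uniform negligibility, together with establishing the triangular-array asymptotic covariances at $\Sigma^\ast$ (and, if one prefers, invoking contiguity and Le~Cam's third lemma to move between the two local alternatives), is where the real work lies; the concluding diagonalization that produces the clean $\rho$-factored form is then routine.
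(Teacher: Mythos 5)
Your proposal is correct, and it reaches the stated limits by a genuinely different route from the paper. The paper proves this proposition via the theory of limit experiments: it establishes local asymptotic normality of $\{P_{\Sigma^\ast + \G h/\sqrt{n}}\}$, invokes \citet[Theorem 16.7]{van2000asymptotic} (noting explicitly that the theorem does not require the two hypotheses to be nested) to represent the asymptotic law of $\lambda_n^{(0:1)}$ as a difference of squared distances from a bivariate Gaussian observation to the two lines $L^{-1}H_0$ and $L^{-1}(H_1 - h)$, and then carries out a trigonometric computation in a rotated coordinate system with angle $\theta = \arcsin\rho$, finishing with the $45^\circ$ change of variables $(Z_1,Z_2) \mapsto ((U+V)/\sqrt{2},(U-V)/\sqrt{2})$. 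You instead work directly with the statistic: your reductions $\lambda_n^{(0)} = -n\log(1-\hat r_{12}^2)$ and $\lambda_n^{(1)} = -n\log(1-\hat r_{12\cdot 3}^2)$ are correct (I verified them from \cref{eqs:ll-0,eqs:ll-1} and the stated determinant identity), the triangular-array CLT gives exactly the covariance $\bigl(\begin{smallmatrix}1 & \rho\\ \rho & 1\end{smallmatrix}\bigr)$ and the means $(0,\gamma)$ and $(\gamma\rho,\gamma)$ you claim, the Taylor remainder is $n\cdot O_P(n^{-2}) = o_P(1)$ as you note, and the algebraic identity $U_1^2 - U_0^2 = \rho(B^2-A^2)$ with your $A,B$ checks out and delivers the correct non-centralities in both \cref{eqs:asymp-dist-01} and \cref{eqs:asymp-dist-10} (after using the symmetry $(Z-c)^2 =_d (Z+c)^2$). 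Your approach is more elementary and self-contained --- it sidesteps the LAN machinery and the need to justify applying the nested-case theorem to non-nested hypotheses --- at the cost of the geometric picture (angle and intercept of two lines) that the paper reuses in its analysis of the limit experiments and the envelope distributions; the paper's route also transfers between the two local alternatives by the structure of the limit experiment, where you would either redo the calculation or invoke contiguity and Le~Cam's third lemma as you suggest.
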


We leave the proof to the next section, where we will present a geometric interpretation of the asymptotic distributions. Alternatively, the distribution can be derived by a change of measure with Le Cam's third lemma; see \citet[Example 6.7]{van2000asymptotic}. 

Asymptotically the log-likelihood ratio statistic is distributed as a scaled difference of two independent non-central $\chi^2_1$ variables, with non-centralities scaled by $\gamma$ and weighted by $\rho$ differently, depending on the true model. Note that the distribution only depends on the absolute values of $\gamma$ and $\rho$. The asymptotic distributions under the two types of sequences (truths) are visualized in \cref{fig:asymp-dist-ws}. We can see that the mean is positive under $\model_0 \setminus \model_1$ and negative under $\model_1 \setminus \model_0$. However, a pair of these distributions are not symmetric to each other in terms of shape. They are further separated apart (more easily distinguished) as $|\gamma|$ or $|\rho|$ becomes bigger, and only become identical (distributed as $\rho (Z_1^2 - Z_2^2)$) when $\gamma \rightarrow 0$.

\begin{figure}[!ht]
\includegraphics[width=1.\textwidth]{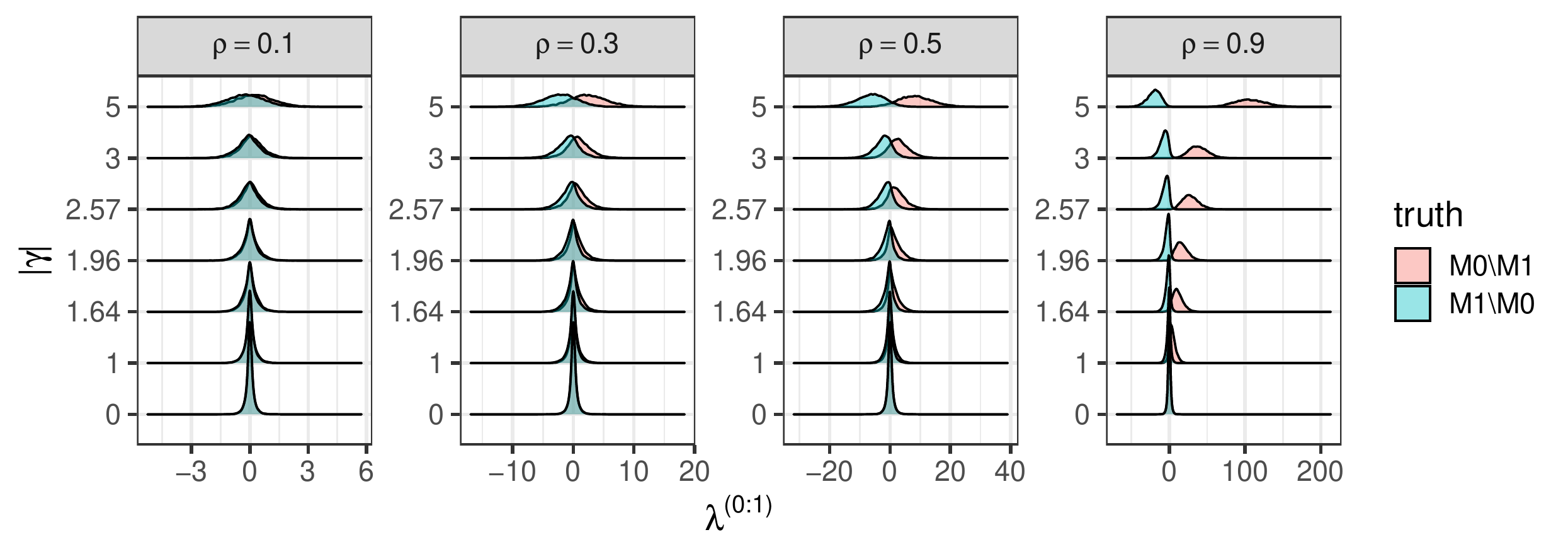}
\caption{Asymptotic distributions of $\lambda_n^{(0:1)}$ under $\Sigma_n^{(0)} \in \model_0 \setminus \model_1$ and $\Sigma_n^{(1)} \in \model_1 \setminus \model_0$ in the weak-strong regime.}
\label{fig:asymp-dist-ws}
\end{figure}

\begin{remark}
The models are locally asymptotically normal at $\Sigma^{\ast} \notin \model_{\text{sing}}$. By regularity, replacing the constant elements in \cref{eqs:sigma-0-ws,eqs:sigma-1-ws} with sequences in $n$ that converge to the corresponding limits does not alter the asymptotic distribution of $\lambda_n^{(0:1)}$.
\end{remark}

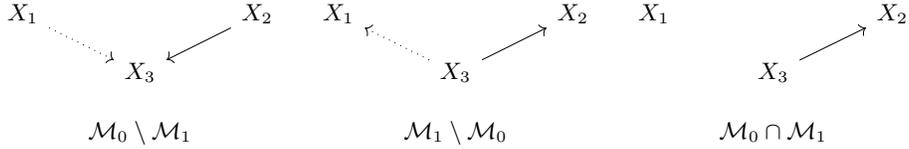
\begin{figure}[!ht]
\begin{tikzcd}[sep=small]
X_1 \arrow[rd, dotted, rightarrow] &  & X_2 \arrow[ld, rightarrow] & X_1 &  & X_2 & X_1 &  & X_2 \\
 & X_3 &  &  & X_3 \arrow[lu, dotted] \arrow[ru] &  &  & X_3 \arrow[ru] &  \\
 & \mathcal{M}_0 \setminus \mathcal{M}_1&  &  & \mathcal{M}_1 \setminus \mathcal{M}_0 &  &  & \mathcal{M}_0 \cap \mathcal{M}_1 & 
\end{tikzcd}
\caption{Two types of local sequences and their common limit.}
\label{fig:local-schematic}
\end{figure}

\subsection{The weak-weak regime} \label{ssec:asymp-ww}
Now we study the asymptotic under $\rho_{13,n}, \rho_{23,n} \rightarrow 0$ and $\sqrt{n} \rho_{13,n} \rho_{23,n} \rightarrow \delta$. The limiting covariance is $\Sigma^{\ast} = \diag(\sigma_{11}, \sigma_{22}, \sigma_{33}) \in \model_{\text{sing}}$, towards which we consider two local sequences
\begin{equation} \label{eqs:sigma-0-ww}
\Sigma_n^{(0)} = \begin{pmatrix}
\sigma_{11} & 0 & \rho_{13,n} \sqrt{\sigma_{11} \sigma_{33}} \\
0  & \sigma_{22} & \rho_{23,n} \sqrt{\sigma_{11} \sigma_{33}} \\
\rho_{13,n} \sqrt{\sigma_{11} \sigma_{33}}  & \rho_{23,n} \sqrt{\sigma_{11} \sigma_{33}} & \sigma_{33}
\end{pmatrix} \in \model_0 \setminus \model_1
\end{equation}
and 
\begin{equation} \label{eqs:sigma-1-ww}
\Sigma_n^{(1)} = \begin{pmatrix}
\sigma_{11} & \rho_{13,n}\rho_{23,n} \sqrt{\sigma_{11} \sigma_{22}} & \rho_{13,n} \sqrt{\sigma_{11} \sigma_{33}} \\
\rho_{13,n}\rho_{23,n} \sqrt{\sigma_{11} \sigma_{22}}  & \sigma_{22} & \rho_{23,n} \sqrt{\sigma_{22} \sigma_{33}} \\
\rho_{13,n} \sqrt{\sigma_{11,n} \sigma_{33}}  & \rho_{23,n} \sqrt{\sigma_{22} \sigma_{33}} & \sigma_{33}\end{pmatrix} \in \model_1 \setminus \model_0.
\end{equation}

\begin{proposition}\label{prop:asymp-ww}
Given $\rho_{13,n} \rho_{23,n} = \delta n^{-1/2} + o(n^{-1/2})$ for $\delta \neq 0$ and $\rho_{13,n
}, \rho_{23,n} \rightarrow 0$. Under $\Sigma_{n}^{(i)} \in \model_i \setminus \model_{1-i} $ for $i=0,1$, we have
\begin{equation}
\lambda_{n}^{(0:1)} \distconvto \delta (2 Z + (-1)^{i} \delta) =_{d} \N \left((-1)^{i} \delta^2, (2 \delta)^2 \right).
\end{equation}
\end{proposition}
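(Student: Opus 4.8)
The plan is to work directly from the closed form \cref{eqs:lambda-01}. Writing $u = s_{13}$, $v = s_{23}$, $w = s_{12}$ for the three off-diagonal sample covariances and noting that both bracketed arguments in \cref{eqs:lambda-01} are positive, I would recast $\lambda_n^{(0:1)} = n \log(A_n / B_n)$ with
\[
A_n = \frac{(s_{11}s_{33} - u^2)(s_{22}s_{33} - v^2)}{s_{33}}, \qquad B_n = s_{11}s_{22}\left(s_{33} - \frac{s_{22}u^2 - 2wuv + s_{11}v^2}{s_{11}s_{22} - w^2}\right).
\]
Since the limiting covariance is diagonal, $s_{ii} \to \sigma_{ii} > 0$ while $u, v, w \to 0$ in probability, so $A_n, B_n \to \sigma_{11}\sigma_{22}\sigma_{33}$ and $\log(A_n/B_n) = (A_n - B_n)/(\sigma_{11}\sigma_{22}\sigma_{33}) + o_p(A_n - B_n)$. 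The statement therefore reduces to computing the limit in distribution of $n(A_n - B_n)$, after which a single application of Slutsky finishes.

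Second, I would record the joint central limit theorem for the sample covariances. Under either sequence \cref{eqs:sigma-0-ww} or \cref{eqs:sigma-1-ww}, each $\sqrt{n}(s_{ij} - \sigma_{ij,n})$ is a centred average of i.i.d.\ terms, so by the Lindeberg CLT it converges to a centred Gaussian whose covariance is given by Isserlis' formula evaluated at the diagonal limit $\Sigma^\ast$; in particular the three off-diagonal fluctuations become asymptotically independent and $\sqrt{n}(w - \sigma_{12,n}) \distconvto \N(0, \sigma_{11}\sigma_{22})$. I would write $w = \sigma_{12,n} + n^{-1/2} G_{12} + o_p(n^{-1/2})$ with $G_{12} \sim \N(0,\sigma_{11}\sigma_{22})$, and emphasise the one place where the two sequences differ: $\sigma_{12,n} = 0$ under $\Sigma_n^{(0)}$, whereas $\sigma_{12,n} = \delta n^{-1/2}\sqrt{\sigma_{11}\sigma_{22}} + o(n^{-1/2})$ under $\Sigma_n^{(1)}$.

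Third comes the algebraic heart. Expanding $A_n$ gives exactly $A_n = s_{11}s_{22}s_{33} - s_{11}v^2 - s_{22}u^2 + u^2v^2/s_{33}$, and expanding $(s_{11}s_{22}-w^2)^{-1}$ gives $B_n = s_{11}s_{22}s_{33} - s_{22}u^2 - s_{11}v^2 + 2wuv + O\!\left(w^2(u^2+v^2)\right)$. The key point I would stress is that the order-$u^2$ and order-$v^2$ terms cancel \emph{exactly} in the difference, leaving
\[
A_n - B_n = \frac{u^2 v^2}{s_{33}} - 2 w u v + o_p(n^{-1}),
\]
where the discarded $O(w^2(u^2+v^2))$ and higher $\log$-remainder terms are $o_p(n^{-1})$ no matter how the product rate $\delta n^{-1/2}$ is split between $\rho_{13,n}$ and $\rho_{23,n}$. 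This cancellation is precisely why the limit will depend only on $\delta$ and not on the individual edge rates.

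Finally I would substitute the asymptotics. Since $uv = \sigma_{13,n}\sigma_{23,n} + o_p(n^{-1/2}) = \delta n^{-1/2}\sigma_{33}\sqrt{\sigma_{11}\sigma_{22}} + o_p(n^{-1/2})$, the first surviving term gives $n\,u^2v^2/s_{33} \to \delta^2 \sigma_{11}\sigma_{22}\sigma_{33}$ deterministically, while $2n\,wuv \to 2\delta\,\sigma_{33}\sqrt{\sigma_{11}\sigma_{22}}\,(\sqrt{n}\,w)$ carries the fluctuation of $w$. Dividing by $\sigma_{11}\sigma_{22}\sigma_{33}$ and using $G_{12}/\sqrt{\sigma_{11}\sigma_{22}} = Z \sim \N(0,1)$ yields $\lambda_n^{(0:1)} \distconvto \delta^2 - 2\delta Z$ under $\Sigma_n^{(0)}$; under $\Sigma_n^{(1)}$ the nonzero mean $\sqrt{n}\,\sigma_{12,n} \to \delta\sqrt{\sigma_{11}\sigma_{22}}$ contributes a further $-2\delta^2$, giving $\lambda_n^{(0:1)} \distconvto -\delta^2 - 2\delta Z$. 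After the symmetrisation $-Z =_d Z$, both cases read $\delta(2Z + (-1)^i\delta)$, as claimed. The main obstacle is the uniform remainder control in the third step: because only the product $\rho_{13,n}\rho_{23,n}$ is pinned down, one must verify that cross terms such as $\sigma_{13,n}\,n^{-1/2}G_{13}$ never survive, and this is guaranteed by the exact cancellation of the quadratic terms rather than by any assumption on a single edge rate.
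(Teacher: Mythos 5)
Your proof is correct, and it follows the same basic strategy as the paper's: both start from the closed form \cref{eqs:lambda-01} and reduce everything to a central limit theorem for the sample covariances, with the limit ultimately carried by the fluctuation of $s_{12}$ alone. The difference is in the bookkeeping. The paper restricts to power-law subsequences $\rho_{13,n}=\eta n^{-a}$, $\rho_{23,n}=\tau n^{-(1/2-a)}$, writes $S_n^{(i)} =_d L_n^{(i)}\Omega_n L_n^{(i)\T}$ with $L_n^{(i)}$ the Cholesky factor of $\Sigma_n^{(i)}$ and $\Omega_n$ a standard-normal sample covariance, and then substitutes $\Omega_n = I + n^{-1/2}W + o_p(n^{-1/2})$ into the closed form, leaving the decisive algebra to ``simplifying.'' You instead apply a triangular-array CLT to the drifting sample covariances directly and make the key step explicit: the order-$u^2$ and order-$v^2$ terms cancel exactly between the two log-arguments, so only $u^2v^2/s_{33}-2wuv$ survives at order $n^{-1}$, and the cross terms such as $\sigma_{13,n}\,n^{-1/2}G_{23}$ inside $uv$ are $o_p(n^{-1/2})$ because both correlations tend to zero. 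This buys two things: the argument works under the stated hypothesis without passing to subsequences of a special form, and it isolates exactly why the limit depends only on the product $\rho_{13,n}\rho_{23,n}$ and not on the individual rates (a fact the paper only confirms by simulation in a subsequent remark). One small caveat on your closing sentence: the disappearance of those cross terms does rest on $\rho_{13,n},\rho_{23,n}\to 0$ individually (which is part of the hypothesis), not solely on the quadratic cancellation; the cancellation is what removes the potentially divergent $n\sigma_{13,n}^2$-type terms, while the weakness of both edges is what kills the $\sigma_{13,n}\,n^{-1/2}G_{23}$ terms. That is a matter of wording, not a gap.
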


The limit is a centered Gaussian shifted and then scaled by $\delta$. Plots for a few values of $\delta$ are given by \cref{fig:row-gaussians}.

\begin{figure}[!ht]
\includegraphics[width=1.\textwidth]{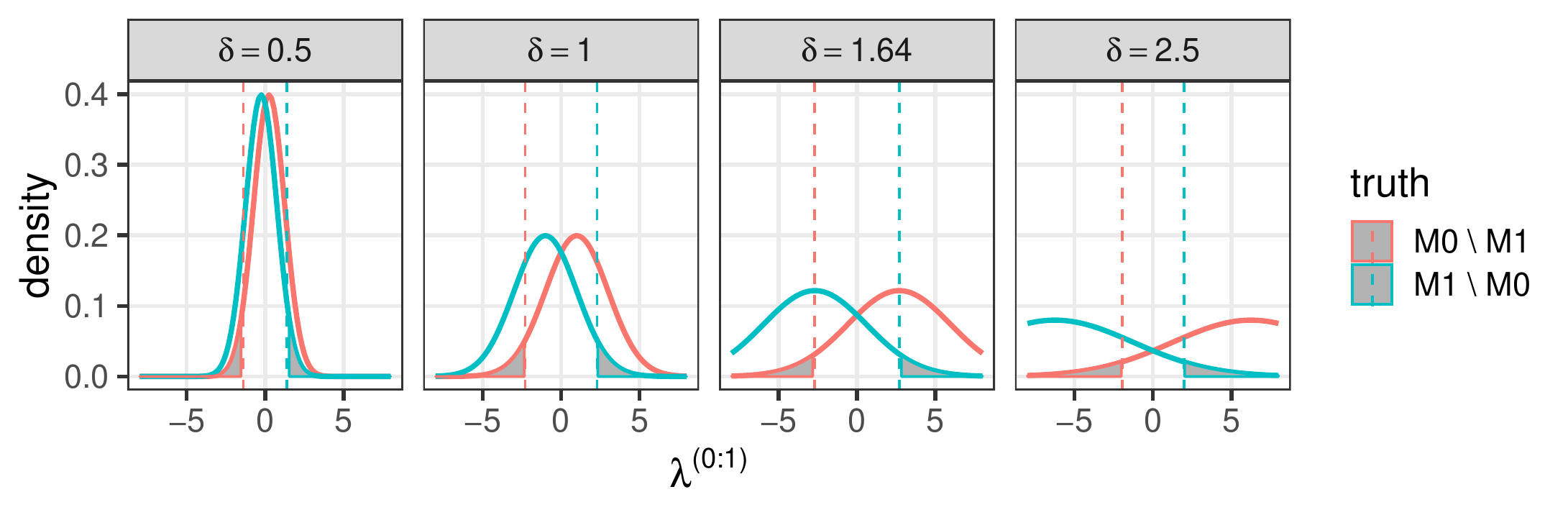}
\caption{Asymptotic distribution of $\lambda_n^{(0:1)}$ in the weak-weak regime under $\model_0 \setminus \model_1$ (red) and $\model_1 \setminus \model_0$ (blue). The vertical lines and shaded areas correspond to $95\%$ upper/lower quantiles.}
\label{fig:row-gaussians}
\end{figure}

\begin{proof}[Proof of \cref{prop:asymp-ww}]
By the coincidence of tangent cones of the two models in this regime, the distribution cannot be obtained from local asymptotic normality or contiguity relative to a tensorized static law. Instead, we perform a direct calculation. For convenience, we assume the form of (sub)sequences of $\rho_{13,n}$ and $\rho_{23,n}$ as
\begin{equation*}
\rho_{13,n} = \eta n^{-a}, \quad \rho_{23,n} = \tau n^{-(1/2 - a)}
\end{equation*}
for $a \in (0, 1/2)$ and $\eta \tau = \delta$. 
We perform a manual change of measure by relating the law under $P_{\Sigma_n^{(i)}}$ to that under $P_{I}$, which is iid sampling of $\N(0, I)$. Under sample size $n$, suppose $\Omega_n$ is the sample covariance under $\N(0, I)$. Now suppose $S_n^{(i)}$ is the sample covariance under $P_{\Sigma_n^{(i)}}$ for $i=0,1$. Then it holds that 
\begin{equation}
S_n^{(i)} =_{d} L_n^{(i)} \Omega_n L_n^{{(i)}\T},
\end{equation}
for some $L_n^{(i)}$ such that $\Sigma_n^{(i)} = L_n^{(i)} L_n^{^{(i)} \T}$.
Here we choose them as the Cholesky decompositions
\begin{equation*}
L_n^{(0)} = \left(
\begin{array}{ccc}
 \sqrt{\sigma _{11}} & 0 & 0 \\
 0 & \sqrt{\sigma _{22}} & 0 \\
 n^{-a} \eta  \sqrt{\sigma _{33}} & n^{a-\frac{1}{2}} \tau  \sqrt{\sigma _{33}} & \sqrt{\left(-\eta^2 n^{-2 a}-\tau ^2 n^{2 a-1}+1\right) \sigma _{33}} \\
\end{array}
\right)
\end{equation*}
and 
\begin{equation*}
L_n^{(1)} = \left(
\begin{array}{ccc}
 \sqrt{\sigma _{11}} & 0 & 0 \\
 \eta  \tau  \sqrt{\frac{\sigma _{22}}{n}} & \sqrt{\frac{\left(n-\eta ^2 \tau ^2\right) \sigma _{22}}{n}} & 0 \\
 n^{-a} \eta  \sqrt{\sigma _{33}} & n^{-a} \left(n^{2 a}-\gamma ^2\right) \tau  \sqrt{\frac{\sigma _{33}}{n-\eta ^2 \tau ^2}} & \sqrt{\frac{n^{-2 a} \left(n^{2 a}-\eta ^2\right) \left(n^{2 a} \tau ^2-n\right) \sigma _{33}}{\eta ^2 \tau ^2-n}} \\
\end{array}
\right).
\end{equation*}

By the central limit theorem, we have
\begin{equation}
\sqrt{n} (\Omega_n - I) \distconvto W
\end{equation}
for $W$ a $3 \times 3$ matrix of joint Gaussian variables whose covariance is determined by the Isserlis matrix. The asymptotic distribution of $\lambda_n^{(0:1)}$ can be obtained by substituting
\begin{equation}
S_n^{(i)} = L_n^{(i)} \left(I + n^{-1/2} W + o_p(n^{-1/2}) \right) L_n^{(i)\T}
\end{equation}
into the closed-form expression of \cref{eqs:lambda-01} and simplifying. We have under $\Sigma_n^{(0)}$
\begin{equation}
\lambda_n^{(0:1)} = \gamma \tau (\gamma \tau - 2 w_{12}) + o_p(1),
\end{equation}
and under $\bm{\Sigma}_n^{(1)}$
\begin{equation}
\lambda_n^{(0:1)} = -\gamma \tau (\gamma \tau + 2 w_{12}) + o_p(1).
\end{equation}
The result is immediate from $w_{12} \sim \N(0,1)$ and $\gamma \tau = \delta$.
\end{proof}

\begin{remark}
The Gaussian asymptotic in \cref{prop:asymp-ww} does \emph{not} depend on how $\rho_{13,n}$ and $\rho_{23,n}$ approach zero \emph{individually}. We verify it with simulations shown in \Cref{fig:asymp-WW}. We simulate under $n = 10,000$ for $5,000$ replicates. We set $\rho_{13,n} = r n^{-a}$ and $\rho_{23,n} = t n^{-(1/2-a)}$ such that $\rho_{13,n}\rho_{23,n} = \delta n^{-1/2}$ for $\delta = rt$ under different values of $a$. 
\end{remark}

\begin{figure}[!ht]
\includegraphics[width=0.9\textwidth]{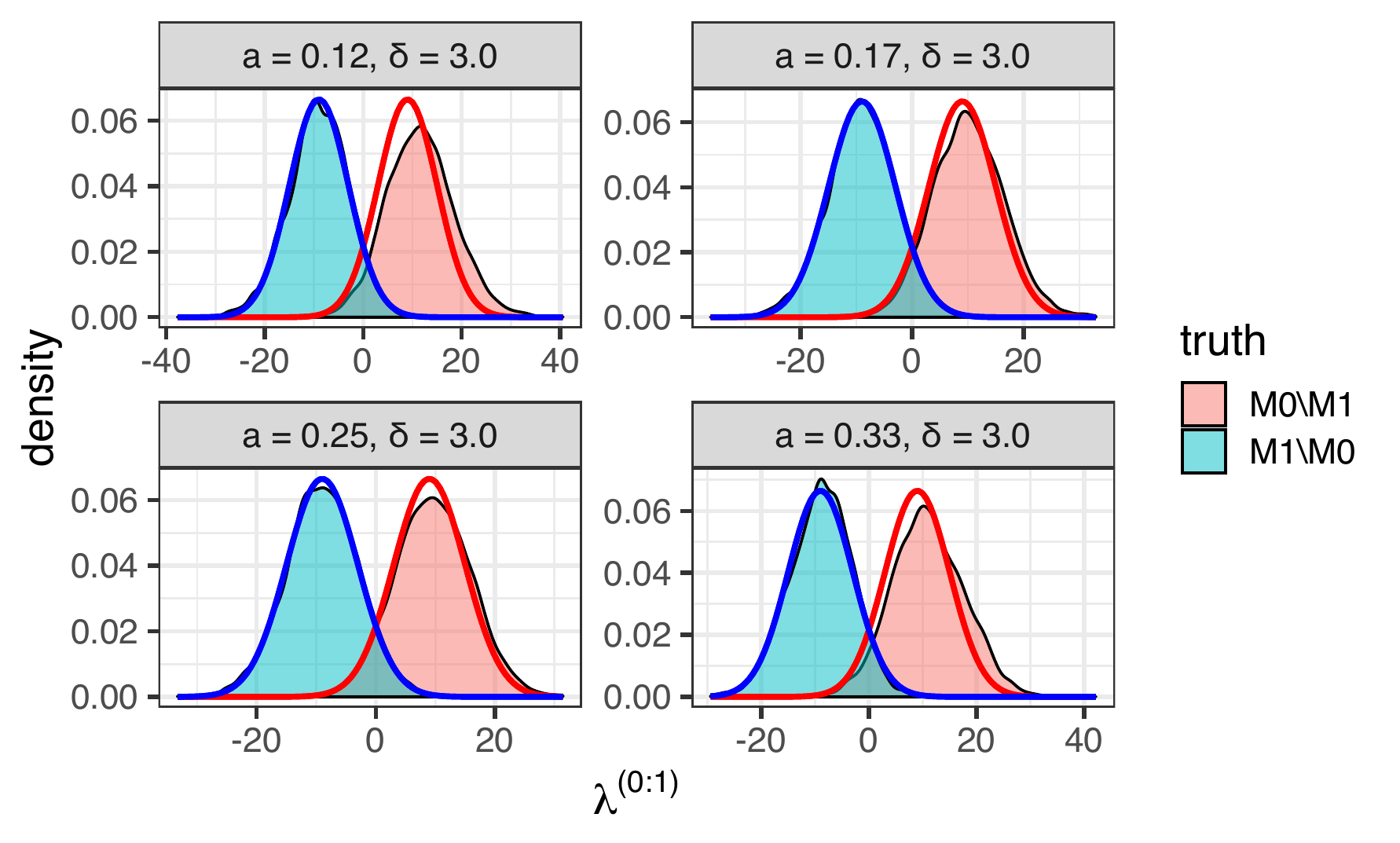}
\caption{Simulated distribution of log-likelihood ratio under $\rho_{13,n} = r n^{-a}$ and $\rho_{23,n} = t n^{-(1/2-a)}$ such that $\rho_{13,n}\rho_{23,n} = \delta n^{-1/2}$ for $\delta = rt$. Red and blue solid curves are theoretical distributions.}
\label{fig:asymp-WW}
\end{figure}
 \subsection{Limit experiments} \label{sec:limit-exp}
We establish the equivalence of testing the two models local asymptotics to that of a limit experiment, which sheds light on the form of the asymptotic distribution. As we will see, the limit experiments are Gaussian location experiments and the problem is asymptotically equivalent to testing the location between two lines from a single normal observation. Further by weak convergence, $\lambda_n^{(0:1)}$ is asymptotically distributed as the likelihood ratio statistic arising from the limit experiment. The reader is referred to \citet[Chapter 7, 9 and 16]{van2000asymptotic} for more background. 

\subsubsection{The weak-strong regime}
We characterize the limit experiment in the weak-strong regime. 

\begin{proposition}
The family of distributions $\{P_{\bm{\Sigma}^{\ast} + \G h / \sqrt{n}}: h \in \mathbb{R}^2\}$ is locally asymptotically normal, where 
\begin{equation*}
\Sigma^{\ast} = \begin{pmatrix}
\sigma_{11} & 0 & 0 \\
0 & \sigma_{22} & \sigma_{23}  \\
0  & \sigma_{23} & \sigma_{33}
\end{pmatrix}, \quad h = (\mathsf{h}_1, \mathsf{h}_2)^{\T},
\end{equation*}
and $\G: \mathbb{R}^2 \rightarrow \mathbb{R}^{3 \times 3}$ is a linear operator 
\begin{equation*}
\G h := \begin{pmatrix}
0 & \mathsf{h}_1 &  \mathsf{h}_2 \\
\mathsf{h}_1  & 0 & 0  \\
\mathsf{h}_2   & 0 & 0
\end{pmatrix}.
\end{equation*}
\end{proposition}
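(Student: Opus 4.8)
The plan is to establish local asymptotic normality (LAN) by verifying that the single-observation Gaussian model is differentiable in quadratic mean (DQM) at $\Sigma^{\ast}$ and then invoking the standard passage from DQM to LAN for product experiments \citep[Theorem 7.2]{van2000asymptotic}. Because $P_\Sigma$ is a mean-zero Gaussian, its log-density $\ell_\Sigma(x) = -\tfrac12\log|\Sigma| - \tfrac12 x^\T\Sigma^{-1}x + \text{const}$ is smooth in $\Sigma$ on the open cone $\PD^{3\times3}$, and $\Sigma^{\ast}\in\PD^{3\times3}$; hence the square-root density $\sqrt{p_\Sigma}$ is $L^2(\mu)$-differentiable and DQM holds, with the score in a symmetric direction $A$ given by the classical formula
\[
\langle \dot\ell_{\Sigma^{\ast}}(x), A\rangle = -\tfrac12 \Tr(\Sigma^{\ast-1}A) + \tfrac12\, x^\T \Sigma^{\ast-1}A\,\Sigma^{\ast-1}x.
\]

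First I would specialize to the two perturbation directions $A_1 := E_{12}+E_{21}$ and $A_2 := E_{13}+E_{31}$ (with $E_{ij}$ the matrix unit having a one in position $(i,j)$), so that $\G h = \mathsf{h}_1 A_1 + \mathsf{h}_2 A_2$. The inverse $\Sigma^{\ast-1}$ is block-diagonal with $(1,1)$ entry $1/\sigma_{11}$ and lower-right block $\bigl(\begin{smallmatrix}\sigma_{22}&\sigma_{23}\\\sigma_{23}&\sigma_{33}\end{smallmatrix}\bigr)^{-1}$; since the off-diagonal $(1,2)$ and $(1,3)$ entries of $\Sigma^{\ast-1}$ vanish, the trace terms satisfy $\Tr(\Sigma^{\ast-1}A_1)=\Tr(\Sigma^{\ast-1}A_2)=0$. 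Consequently the two scores are \emph{centered quadratic forms} $\dot\ell_j(x) = x^\T \Sigma^{\ast-1}E_{1j'}\Sigma^{\ast-1}x$ (with $j'=2,3$), which is exactly what makes $\E_{\Sigma^{\ast}}[\dot\ell_j]=0$ and sets up the central limit theorem $\Delta_n := n^{-1/2}\sum_{i=1}^n \dot\ell_{\Sigma^{\ast}}(X_i) \distconvto \N(0, I_{\Sigma^{\ast}})$.

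Next I would compute the $2\times 2$ Fisher information via the Gaussian covariance formula $(I_{\Sigma^{\ast}})_{jk} = \tfrac12\Tr(\Sigma^{\ast-1}A_j\,\Sigma^{\ast-1}A_k)$ and record its closed form in $\sigma_{11},\sigma_{22},\sigma_{33},\sigma_{23}$. Positive definiteness is immediate: $I_{\Sigma^{\ast}}$ is half the Gram matrix of $A_1,A_2$ under the positive-definite inner product $\langle B,C\rangle = \Tr(\Sigma^{\ast-1}B\,\Sigma^{\ast-1}C)$ (positivity follows from $\langle B,B\rangle = \|\Sigma^{\ast-1/2}B\Sigma^{\ast-1/2}\|_F^2$ and $\Sigma^{\ast}\succ0$), and $A_1,A_2$ are linearly independent, so $I_{\Sigma^{\ast}}\succ0$. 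With DQM, the score $\Delta_n$, and the nonsingular $I_{\Sigma^{\ast}}$ in hand, \citet{van2000asymptotic} yields the LAN expansion
\[
\log\frac{dP^n_{\Sigma^{\ast}+\G h/\sqrt n}}{dP^n_{\Sigma^{\ast}}} = h^\T\Delta_n - \tfrac12\, h^\T I_{\Sigma^{\ast}}h + o_{P_{\Sigma^{\ast}}}(1),
\]
uniformly over $h$ in compacta, which is the claim.

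The main obstacle is not conceptual but bookkeeping: verifying DQM carefully enough that the higher-order remainder in the Taylor expansion of $\ell_{\Sigma^{\ast}+\G h/\sqrt n}$ is $o_{P_{\Sigma^{\ast}}}(1)$ once summed over the $n$ observations. For this smooth exponential family it reduces to controlling a third-order term whose coefficient involves $\Sigma^{\ast-1}$ and a cubic moment of $x$; since $\Sigma^{\ast}$ is fixed and nonsingular and Gaussian moments are finite, this is routine. Alternatively I would bypass DQM altogether by expanding the product log-likelihood directly, applying the CLT to the linear (score) term and the law of large numbers to the Hessian term $-\tfrac{1}{2n}\sum_i (\G h)^\T \nabla^2\ell_{\Sigma^{\ast}}(X_i)(\G h) \to -\tfrac12 h^\T I_{\Sigma^{\ast}}h$, with the remainder handled by the same moment bound.
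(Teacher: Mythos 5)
Your proposal is correct and follows essentially the same route as the paper, which simply notes that the Gaussian family is differentiable in quadratic mean at $\Sigma^{\ast}$ and invokes the standard DQM-to-LAN results of van der Vaart (7.14 and 7.15). Your additional computations (the centered quadratic-form scores, the trace formula for $I_{\Sigma^{\ast}}$, and the Gram-matrix argument for nonsingularity) are accurate elaborations of the same argument rather than a different approach.
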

\begin{proof}
The Gaussian model $P_{\Sigma}$ is differentiable in quadratic mean at $\Sigma^{\ast}$. The result follows from \citet[7.14 and 7.15]{van2000asymptotic}.
\end{proof}

The limit experiment of a LAN (locally asymptotically normal) family is a normal location experiment.

\begin{proposition} The sequence of experiments indexed by the local parameter $h$ converges to the following normal location experiment 
\begin{equation}
\left( P_{\Sigma^{\ast} + \G h / \sqrt{n}} \right)_{h \in \mathbb{R}^2} \weaklyconvto \left( \N(h, I_{\Sigma^{\ast}}^{-1}) \right )_{h \in \mathbb{R}^{2}}, \label{eqs:LAN}
\end{equation}
where 
\begin{equation*}
I_{\Sigma^{\ast}}^{-1} = \sigma_{11} \left(
\begin{array}{ccc}
 \sigma _{22} &  \rho \sqrt{\sigma_{22} \sigma_{33}} \\
 \rho \sqrt{\sigma_{22} \sigma_{33}} & \sigma _{33} \\
\end{array}
\right).
\end{equation*}
\end{proposition}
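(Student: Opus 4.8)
The plan is to treat this as a direct corollary of the preceding LAN statement together with the general theory of convergence of statistical experiments, so that the only real work is identifying the limiting Fisher information. Having established in the previous proposition that $\left(P_{\Sigma^{\ast} + \G h/\sqrt{n}}\right)_{h \in \mathbb{R}^2}$ is LAN at $\Sigma^{\ast}$ with local parameter $h$ and nonsingular information, I would invoke the standard fact that a LAN sequence of experiments converges weakly (in the sense of Le Cam) to the Gaussian shift experiment $\left(\N(h, I_{\Sigma^{\ast}}^{-1})\right)_{h \in \mathbb{R}^2}$, where $I_{\Sigma^{\ast}}$ is the Fisher information for the local coordinate $h$; see \citet[Chapter 9]{van2000asymptotic}. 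This immediately yields \cref{eqs:LAN} once $I_{\Sigma^{\ast}}$ is computed, so the remainder of the proof reduces to one finite-dimensional Fisher-information calculation.

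To compute $I_{\Sigma^{\ast}}$, I would use the classical formula for the Fisher information of a mean-zero Gaussian family in a matrix parametrization. Writing the two perturbation directions as $\G e_1$ and $\G e_2$ (the derivatives of $\G h$ with respect to $\mathsf{h}_1$ and $\mathsf{h}_2$, i.e.\ the symmetric matrices with a single unit entry in positions $(1,2)$ and $(1,3)$ respectively), the information entries are
\begin{equation*}
(I_{\Sigma^{\ast}})_{jk} = \frac{1}{2}\,\Tr\!\left((\Sigma^{\ast})^{-1}\,\G e_j\,(\Sigma^{\ast})^{-1}\,\G e_k\right), \qquad j,k \in \{1,2\}.
\end{equation*}
The key structural simplification is that $\Sigma^{\ast}$ has $\sigma_{12}=\sigma_{13}=0$, so $(\Sigma^{\ast})^{-1}$ is block diagonal, decoupling index $1$ from the $(2,3)$ block; its relevant entries are $m_{11}=1/\sigma_{11}$ and, with $D := \sigma_{22}\sigma_{33}-\sigma_{23}^2$, $m_{22}=\sigma_{33}/D$, $m_{33}=\sigma_{22}/D$, $m_{23}=-\sigma_{23}/D$. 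Because the off-diagonal entries $m_{12},m_{13}$ vanish, the four-index trace sums collapse to just two surviving terms in each case, giving $(I_{\Sigma^{\ast}})_{11}=m_{11}m_{22}$, $(I_{\Sigma^{\ast}})_{22}=m_{11}m_{33}$, and $(I_{\Sigma^{\ast}})_{12}=m_{11}m_{23}$, that is,
\begin{equation*}
I_{\Sigma^{\ast}} = \frac{1}{\sigma_{11} D}\begin{pmatrix} \sigma_{33} & -\sigma_{23} \\ -\sigma_{23} & \sigma_{22} \end{pmatrix}.
\end{equation*}

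I would then finish by inverting this $2\times 2$ matrix: its inner matrix has determinant $D$, so $I_{\Sigma^{\ast}}^{-1} = \sigma_{11}\left(\begin{smallmatrix} \sigma_{22} & \sigma_{23} \\ \sigma_{23} & \sigma_{33}\end{smallmatrix}\right)$, and substituting $\sigma_{23} = \rho\sqrt{\sigma_{22}\sigma_{33}}$ recovers exactly the stated covariance. The argument has no serious obstacle: the convergence-of-experiments step is off-the-shelf once LAN is known, and the main point requiring care is the trace bookkeeping in the information computation, where one must confirm that the vanishing of the $(1,2)$ and $(1,3)$ entries of $\Sigma^{\ast}$ is what eliminates the cross terms and leaves the clean two-term expressions above.
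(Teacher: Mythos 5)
Your proposal is correct and follows essentially the same route as the paper: both deduce the convergence of experiments from the LAN property via the standard Gaussian-shift limit (Corollary 9.5 of van der Vaart), and your explicit trace computation of the $2\times 2$ information block in the directions $\G e_1, \G e_2$ correctly yields $I_{\Sigma^{\ast}}^{-1} = \sigma_{11}\bigl(\begin{smallmatrix}\sigma_{22} & \sigma_{23}\\ \sigma_{23} & \sigma_{33}\end{smallmatrix}\bigr)$. The paper merely asserts this information matrix (describing it as the information for $(\sigma_{12},\sigma_{13})$ with the remaining parameters held fixed), so your calculation simply fills in a detail the paper leaves implicit.
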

\begin{proof}
$\{P_{\bm{\Sigma}^{\ast} + \G h / \sqrt{n}}: h \in \mathbb{R}^2\}$ is LAN with non-singular Fisher information $I_{\Sigma}^{\ast}$, which is the conditional information matrix of $(\sigma_{12}, \sigma_{13})$ under $P_{\Sigma}$ given $(\sigma_{11}, \sigma_{22}, \sigma_{33}, \sigma_{23})$, corresponding to $(\mathsf{h}_1, \mathsf{h}_2)$. The result then follows from \citet[Corollary 9.5]{van2000asymptotic}.
\end{proof}

The local sequences \cref{eqs:sigma-0-ws} and \cref{eqs:sigma-1-ws} can be identified as $\Sigma^{\ast} + \G h / \sqrt{n}$ with $h$ taking value of
\begin{equation}
h_0 = (0, \gamma \sqrt{\sigma_{11} \sigma_{33}})^{\T}, \quad h_1 = (\gamma \rho \sqrt{\sigma_{11} \sigma_{22}}, \gamma \sqrt{\sigma_{11} \sigma_{33}})^{\T}
\end{equation}
respectively. Models $\model_0$ and $\model_1$ correspond to the set of $h_0$ and $h_1$ respectively as $\gamma$ varies in $\mathbb{R}$. That is, $\model_0$ and $\model_1$ are represented by local parameter spaces
\begin{equation}
H_0 = \{0\} \times \mathbb{R}, \quad H_1 = \{(\gamma \rho \sqrt{\sigma_{11} \sigma_{22}}, \gamma \sqrt{\sigma_{11} \sigma_{33}})^{\T}: \gamma \in \mathbb{R} \},
\end{equation}
which consist of all limits of $\sqrt{n} \G^{-1} (\Sigma_n^{(i)} - \Sigma^{\ast})$ for $i=0,1$ (see \citet[Chapter 7.4]{van2000asymptotic}). Note $H_0$ and $H_1$ are lines in $\mathbb{R}^2$ (affine) and they correspond to tangent cones from $\model_0$ and $\model_1$ at $\Sigma^{\ast}$ under Chernoff regularity; see also \citet{drton2009likelihood} and \citet{geyer1994asymptotics}.

\begin{proposition} \label{prop:limit-exp}
Suppose $I_{\Sigma^{\ast}}^{-1} = L L^{\T}$. For $i=0,1$, under $P^{n}_{\Sigma^{\ast} + \G h / \sqrt{n}}$ for $h = h_i$, it holds that $(-1)^{i} \lambda_n^{(0:1)}$ is asymptotically distributed as the likelihood ratio statistic of testing
\begin{equation}
\mu \in L^{-1} H_i \quad \text{versus} \quad \mu \in L^{-1} (H_{1-i} - h_i)
\end{equation}
from a single observation $Z \sim \N(\mu = \bm{0}, I_2)$. 
\end{proposition}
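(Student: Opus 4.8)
The plan is to push the computation into the limit Gaussian shift experiment of \cref{eqs:LAN} and to evaluate the likelihood ratio functional there. By local asymptotic normality, the localized log-likelihood admits the expansion, uniformly over $h$ in compact sets,
\[
\ell_n(\Sigma^{\ast} + \G h / \sqrt{n}) - \ell_n(\Sigma^{\ast}) = h^{\T} \Delta_n - \tfrac{1}{2} h^{\T} I_{\Sigma^{\ast}} h + o_p(1),
\]
where the central sequence $\Delta_n$ converges weakly to $\N(\bm 0, I_{\Sigma^{\ast}})$ under $P_{\Sigma^{\ast}}$; under the local truth $h = h_i$, contiguity and Le~Cam's third lemma \citep[Chapter 6]{van2000asymptotic} shift the limit so that the efficient estimator $I_{\Sigma^{\ast}}^{-1} \Delta_n$ behaves asymptotically like a single draw $X \sim \N(h_i, I_{\Sigma^{\ast}}^{-1})$, the canonical observation of the limit experiment. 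Since $\lambda_n^{(0:1)} = 2(\sup_{\Sigma \in \Theta_0} \ell_n(\Sigma) - \sup_{\Sigma \in \Theta_1} \ell_n(\Sigma))$, the whole statistic is, up to $o_p(1)$, a functional of this local log-likelihood obtained by profiling over the two models.

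The crux --- and the step I expect to be the main obstacle --- is to show that profiling the curved models $\Theta_0, \Theta_1$ is asymptotically the same as minimizing the limiting quadratic over their tangent cones $H_0, H_1$. I would first argue that the constrained MLEs are $\sqrt{n}$-consistent, so that their localizations $\hat{h}_n^{(i)} := \sqrt{n}\, \G^{-1}(\hat{\Sigma}_n^{(i)} - \Sigma^{\ast})$ are tight; then, using Chernoff regularity of each model at $\Sigma^{\ast} \notin \model_{\text{sing}}$ --- so that $H_0, H_1$ are the genuine tangent lines --- together with the uniform quadratic expansion, invoke the argmax machinery for constrained M-estimators \citep[Chapter 16]{van2000asymptotic} (see also \citealp{geyer1994asymptotics}) to conclude $\hat{h}_n^{(i)} \distconvto \argmin_{h \in H_i} (X - h)^{\T} I_{\Sigma^{\ast}} (X - h)$ and that $2 \sup_{\Sigma \in \Theta_i} \ell_n(\Sigma)$ equals, up to an $h$-free term common to both models, $-\inf_{h \in H_i} (X - h)^{\T} I_{\Sigma^{\ast}} (X - h) + o_p(1)$. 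Differencing the two profiled suprema cancels the common term and yields
\[
\lambda_n^{(0:1)} \distconvto \inf_{h \in H_1} (X - h)^{\T} I_{\Sigma^{\ast}} (X - h) - \inf_{h \in H_0} (X - h)^{\T} I_{\Sigma^{\ast}} (X - h).
\]
The delicate points here are the tightness argument, which rests on consistency of the constrained MLE together with positive definiteness of $I_{\Sigma^{\ast}}$, and the approximation of each curved model by its linear tangent cone at the $\sqrt{n}$-scale.

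It then remains to standardize and recenter. Writing $I_{\Sigma^{\ast}}^{-1} = L L^{\T}$ gives $(X - h)^{\T} I_{\Sigma^{\ast}} (X - h) = \|L^{-1}(X - h)\|^2$, so each infimum is a squared Euclidean distance from $L^{-1} X$ to the line $L^{-1} H_j$. Under the truth $h = h_i$, set $Z := L^{-1}(X - h_i) \sim \N(\bm 0, I_2)$; because $H_i$ is a line through the origin containing $h_i$, we have $H_i - h_i = H_i$, whereas $H_{1-i} - h_i$ is a genuinely shifted line. Taking $\mu = L^{-1}(h - h_i)$ as the optimization variable turns the two infima into $\inf_{\mu \in L^{-1} H_i} \|Z - \mu\|^2$ and $\inf_{\mu \in L^{-1}(H_{1-i} - h_i)} \|Z - \mu\|^2$. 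Reading off the signs, $(-1)^i \lambda_n^{(0:1)}$ converges to $\inf_{\mu \in L^{-1}(H_{1-i} - h_i)} \|Z - \mu\|^2 - \inf_{\mu \in L^{-1} H_i} \|Z - \mu\|^2$, which is exactly $2(\sup_{\mu \in L^{-1} H_i} \ell(\mu) - \sup_{\mu \in L^{-1}(H_{1-i} - h_i)} \ell(\mu))$ for the standard-normal log-likelihood $\ell(\mu) = -\tfrac{1}{2}\|Z - \mu\|^2$ --- that is, the likelihood ratio statistic of testing $\mu \in L^{-1} H_i$ versus $\mu \in L^{-1}(H_{1-i} - h_i)$ from $Z \sim \N(\bm 0, I_2)$, as claimed.
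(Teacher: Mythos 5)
Your argument is correct and follows essentially the same route as the paper: reduce to the Gaussian limit experiment, identify $\lambda_n^{(0:1)}$ with the difference of squared $I_{\Sigma^{\ast}}$-distances from the limiting observation to the tangent lines $H_0$ and $H_1$, then standardize by $L^{-1}$ and recenter using $H_i - h_i = H_i$. The only difference is that the paper simply cites \citet[Theorem 16.7]{van2000asymptotic} (noting its proof does not require nestedness) for the step you sketch via the LAN expansion and the tangent-cone/argmax machinery; your signs and the final identification of the limiting functional as the stated likelihood ratio statistic are all consistent with the paper's subsequent derivation of \cref{prop:asymp-weak-strong}.
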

\begin{proof}
Under $P^{n}_{\Sigma^{\ast} + \G h / \sqrt{n}}$, by \citet[Theorem 16.7]{van2000asymptotic} $\lambda_n^{(0:1)}$ is asymptotically distributed as the log-likelihood ratio statistic for testing $H_0$ and $H_1$ based on a single sample from $\N(h, I_{\Sigma^\ast}^{-1})$. Note that the theorem still applies to our case even though $H_0$ and $H_1$ are non-nested, as its proof does not require the two models to be nested. 
That is, given $X \sim \N(m=\bm{0}, I_{\Sigma^{\ast}}^{-1})$, we have
\begin{equation}
\begin{split}
\lambda_n^{(0:1)} &\distconvto \|I_{{\Sigma}^{\ast}}^{1/2} ({X} + h) - {I}_{{\Sigma}^{\ast}}^{1/2} H_0 \|^2 - \|{I}_{{\Sigma}^{\ast}}^{1/2} ({X} + h) - {I}_{{\Sigma}^{\ast}}^{1/2} H_1 \|^2 \\
&=_{d} \|{I}_{{\Sigma}^{\ast}}^{1/2} {X} - {I}_{{\Sigma}^{\ast}}^{1/2} (H_0 - h )\|^2 - \|{I}_{{\Sigma}^{\ast}}^{1/2} {X} - {I}_{{\Sigma}^{\ast}}^{1/2} (H_1 - h)\|^2,
\end{split}
\end{equation}
which is equivalent to testing $m \in H_0 - h$ versus $m \in H_1 - h$ from $X$.  Given $I_{\Sigma^{\ast}}^{-1} = L L^{\T}$, by rewriting $X =_{d} L Z$ for $Z \sim \N(\mu = \bm{0}, I_2)$, the testing problem is mapped to that from $Z$ by $L^{-1}$. Hence, this is further equivalent to testing 
\begin{equation*}
\mu \in L^{-1} (H_0 - h) \quad \text{versus} \quad \mu \in L^{-1} (H_1 - h)
\end{equation*}
from $Z$. Note that $H_i - h_i = H_i$ since $H_i$ is affine. 
\end{proof}

Now we derive limit experiments based on \cref{prop:limit-exp}. The Cholesky decomposition gives
\begin{equation*}
\begin{split}
L &= \sqrt{\sigma_{11}} \left(
\begin{array}{cc}
\sqrt{\sigma_{22}} & 0 \\
\rho \sqrt{\sigma_{33}} & \sqrt{(1-\rho^2) \sigma_{33}}
\end{array}
\right), \\
L^{-1} &= \frac{1}{\sqrt{\sigma_{11}}} \left(
\begin{array}{cc}
1/\sqrt{\sigma_{22}} & 0 \\
-\rho / \sqrt{(1-\rho^2) \sigma_{22}} & 1/\sqrt{(1-\rho^2) \sigma_{33}}
\end{array}
\right).
\end{split}
\end{equation*}
We have, when $h = h_0$
\begin{equation}
L^{-1} H_0 = \{0\} \times \mathbb{R}, \quad L^{-1} (H_1 - h) = \left \{\begin{pmatrix} 0\\ \frac{-\gamma}{\sqrt{1 - \rho^2}} \end{pmatrix} + u \begin{pmatrix} \rho \\ \sqrt{1 - \rho^2}
\end{pmatrix}: u \in \mathbb{R} \right \}, \label{eqs:lim-exp-ws-0}
\end{equation}
and when $ h = h_1$
\begin{equation}
L^{-1} (H_0 - h) = \{-\gamma \rho\} \times \mathbb{R}, \quad L^{-1} H_1 = \left \{ u \begin{pmatrix} \rho \\ \sqrt{1 - \rho^2}  \end{pmatrix}: u \in \mathbb{R} \right  \}. \label{eqs:lim-exp-ws-1}
\end{equation}

They are visualized in \Cref{fig:experiments}. The limit experiments \cref{eqs:lim-exp-ws-0} and \cref{eqs:lim-exp-ws-1} are of the same type as they are both characterized by an \emph{angle} and an \emph{intercept}. The two have the same angle $\theta = \arcsin \rho$ and their intercepts are related by a factor of $1/\sqrt{1-\rho^2}$. 

\begin{figure}[!ht]
\begin{tikzpicture}[scale=.35]
  \tkzDefPoint(0,0){O}
  \tkzDefPoint(3,0){A}
  \tkzDefPoint(4,4){B}
  \tkzDefPoint(4.5,0){Z}
  \fill[inner color=gray, outer color=white] (Z) circle (2);
  \tkzDrawLine[add=.5 and 2, color=blue](O,A) \tkzLabelLine[pos=2.7, above](O,A){$\model_0$}
  \tkzDrawLine[add=.2 and .5, color=blue](O,B) \tkzLabelLine[pos=1, above=2ex](O,B){$\model_1$}
  \tkzMarkAngle[color=blue](A,O,B)
  \tkzLabelAngle[right, pos=1.4, color=blue](A,O,B){$\theta = \arcsin \rho$}
  \tkzDrawPoints(O,Z)
  \draw[color=blue,decoration={brace,mirror,raise=5pt},decorate] (0,0) -- node[below=6pt] {$\gamma/\sqrt{1-\rho^2}$} (Z);
\end{tikzpicture}
\begin{tikzpicture}[scale=.35]
  \tkzDefPoint(0,0){O}
  \tkzDefPoint(3,0){A}
  \tkzDefPoint(4,4){B}
  \tkzDefPoint(4.5,0){Z}
  \fill[inner color=gray, outer color=white] (Z) circle (2);
  \tkzDrawLine[add=.5 and 2, color=blue](O,A) \tkzLabelLine[pos=2.7, above](O,A){$\model_1$}
  \tkzDrawLine[add=.2 and .5, color=blue](O,B) \tkzLabelLine[pos=1, above=2ex](O,B){$\model_0$}
  \tkzMarkAngle[color=blue](A,O,B)
  \tkzLabelAngle[right, pos=1.4, color=blue](A,O,B){$\theta = \arcsin \rho$}
  \tkzDrawPoints(O,Z)
  \draw[color=blue,decoration={brace,mirror,raise=5pt},decorate] (0,0) -- node[below=6pt] {$\gamma$} (Z);
\end{tikzpicture}
\begin{tikzpicture}[scale=.35]
  \tkzDefPoint(0,0){O}
  \tkzDefPoint(0,1){A}
  \tkzDefPoint(4.5,0){C}
  \tkzDefPoint(4.5,1){B}
  \fill[inner color=gray, outer color=white] (O) circle (2);
  \tkzDrawLine[add=3 and 3, color=blue](O,A) \tkzLabelLine[above=9ex](O,A){$\model_i$}
  \tkzDrawLine[add=3 and 3, color=blue](C,B) \tkzLabelLine[above=9ex](C,B){$\model_{1-i}$}
  \tkzDrawLine[add=.5 and .5, style=dashed, color=blue](O,C)
  \tkzDrawPoints(O)
  \draw[color=blue,decoration={brace,mirror,raise=5pt},decorate] (O) -- node[below=6pt] {$\delta$} (C);
\end{tikzpicture}
\caption{Three limit experiments: (1) $\model_0 \setminus \model_1$ in the weak-strong regime, (2) $\model_1 \setminus \model_0$ in the weak-strong regime, and (3) $\model_i \setminus \model_{1-i}$ in the weak-weak regime for $i=0,1$.}
\label{fig:experiments}
\end{figure}
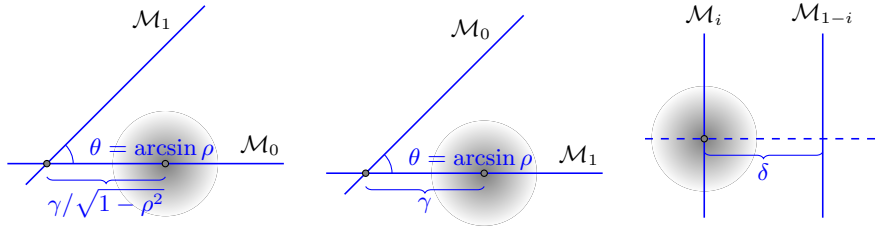

Now we prove the form of the asymptotic distributions in \cref{prop:asymp-weak-strong} from the limit experiment. 

\begin{figure}[!ht]
\begin{tikzpicture}[scale=.5]
  \tkzDefPoint(0,0){O}
  \tkzDefPoint(-3,0){A}
  \tkzDefPoint(3,0){B}
  \tkzDefPoint(0,-3.5){C}
  \tkzDefPoint(2,4){Z}
  \tkzDrawPoints(A,B,O,Z,C) \tkzLabelPoints(B,O) \tkzLabelPoints[above](Z) \tkzLabelPoints[below](A,C)
  \tkzDefPointBy[projection=onto C--B](Z) \tkzGetPoint{F1}
  \tkzDefPointBy[projection=onto C--A](Z) \tkzGetPoint{F2}
  \tkzDefPointBy[projection=onto A--B](Z) \tkzGetPoint{H}
  \tkzDrawLine[add= 1 and 1, color=black, style=dashed](A,B)
  \tkzDrawLine[add= 0 and 2, color=black, style=dashed](C,O)
  \tkzLabelSegment[right, blue](C,B){$\gamma$}
  \tkzDrawSegment[red](H,B) \tkzLabelSegment[red, above](H,B){$Z_1$}
  \tkzDrawLine[add= 0 and 2, color=blue](C,A) \tkzLabelLine[pos=3, left](C,A){$\model_0$}
  \tkzDrawLine[add= 0 and 2, color=blue](C,B) \tkzLabelLine[pos=3, right](C,B){$\model_1$}
  \tkzDrawSegments[color=red](Z,H)  \tkzLabelSegment[red, right](Z,H){$Z_2$}
  \tkzDrawSegments[color=red](F1,Z F2,Z)
  \tkzLabelSegment[red, above left](F2,Z){$d_2$}
  \tkzLabelSegment[red, above right](F1,Z){$d_1$}
  \tkzMarkRightAngles[color=red](Z,F2,C B,F1,Z O,H,Z)
  \tkzMarkAngle[color=blue](B,C,O)
  \tkzLabelAngle[pos=0.5, right=2ex, color=blue](O,C,B){$\theta/2$}
\end{tikzpicture}
\caption{Derivation of the asymptotic distribution \cref{eqs:asymp-dist-10} from the limit experiment of $\model_1 \setminus \model_0$ under the weak-strong regime (the middle panel of \cref{fig:experiments}).}
\label{fig:geo-limit}
\end{figure}
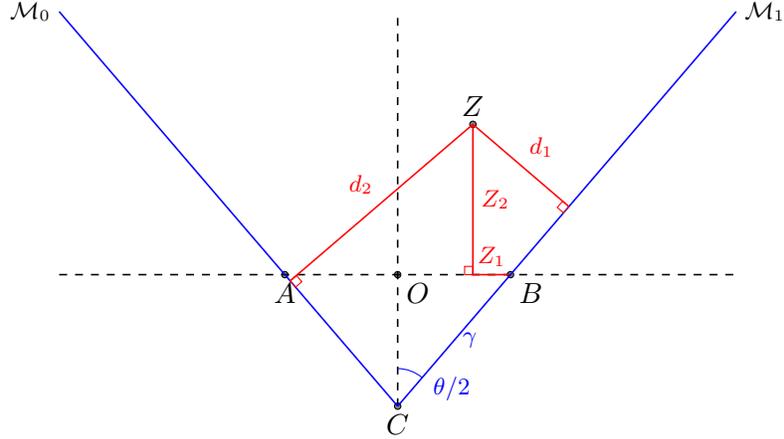

\begin{proof}[Proof of \cref{prop:asymp-weak-strong}]
Since the limit experiments are of the same type, we only derive for local alternatives $\Sigma_n^{(1)} \in \model_1 \setminus \model_0$. We set the coordinate system as in \cref{fig:geo-limit}, where the bisector of angle $\angle B C A = \theta = \arcsin \rho$ is the $y$-axis. The standard Gaussian vector centered at $B$ is represented as $Z=(x, y) = (\gamma \sin (\theta/2) - Z_1, Z_2)$. By the limit experiment, we have $\lambda_n^{(0:1)} \distconvto d_1^2 - d_2^2$. $\model_0$ and $\model_1$ are respectively represented by lines $y = \pm k x + a$ for $k=\cot (\theta/2)$ and $a = -\gamma \cos(\theta/2)$. We have
\begin{equation*}
\begin{split}
d_1^2 - d_2^2 &= \frac{(a + k x - y)^2}{1+k^2} - \frac{(a - k x - y)^2}{1+k^2} \\
&= 2\rho (Z_1 - \gamma \sin (\theta/2))(Z_2 + \gamma \cos(\theta/2)),
\end{split}
\end{equation*}
where we used 
\begin{equation*}
\frac{2k}{1+k^2} = \frac{2 \cot(\theta/2)}{1+\cot^2(\theta/2)} = \sin \theta = \rho.
\end{equation*}
By a change of variables $(Z_1, Z_2) =_{d} ((U+V)/\sqrt{2}, (U-V)/\sqrt{2})$ for another pair of independent standard normals and using the fact
\begin{equation*}
\sqrt{1+\sqrt{1-\rho^2}} - \sqrt{1-\sqrt{1-\rho^2}} = \sqrt{2(1-\rho)},
\end{equation*}
upon simplifying we have
\begin{equation*}
\lambda_{n}^{(0:1)} \distconvto d_1^2 - d_2^2 =_{d} \rho \left [ \left(U + \gamma \sqrt{\frac{1-\rho}{2}} \right)^2 - \left(V + \gamma \sqrt{\frac{1 + \rho}{2}} \right)^2 \right ].
\end{equation*}
\end{proof}

\subsubsection{The weak-weak regime}
The Gaussian limit in \Cref{prop:asymp-ww} shows that the limit experiment of the weak-weak regime is testing the location of a \emph{univariate} normal between two points; see the last panel of \cref{fig:experiments}.

\begin{corollary}
Testing $\model_0$ versus $\model_1$ under $\sqrt{n} \rho_{12,n} \rho_{13,n} \rightarrow \delta$ for $\delta \neq 0$ with $\rho_{12,n}, \rho_{13,n} \rightarrow 0$ is asymptotically equivalent to testing $H_0: \mu = 0$ versus $H_1: \mu = \delta$ from a single observation $Z \sim \N(\mu, 1)$. 
\end{corollary}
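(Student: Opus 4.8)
The plan is to show that the sequence of two-point experiments $(P_n^n, Q_n^n)$, with $P_n := P_{\Sigma_n^{(0)}}$ and $Q_n := P_{\Sigma_n^{(1)}}$ for the sequences in \cref{eqs:sigma-0-ww,eqs:sigma-1-ww}, converges in Le Cam's sense to the Gaussian location experiment $(\N(0,1), \N(\delta,1))$. For a two-point experiment the entire statistical structure is carried by the law of the likelihood ratio, so convergence of experiments reduces to weak convergence of $\Lambda_n := \log(\dd Q_n^n / \dd P_n^n)$, under $P_n^n$, to $\log\{\dd \N(\delta,1)/\dd\N(0,1)\}(Z) = \delta Z - \delta^2/2$ with $Z \sim \N(0,1)$; that is, to $\Lambda_n \distconvto \N(-\delta^2/2, \delta^2)$ \citep[Chapter 9]{van2000asymptotic}. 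I emphasize at the outset that this $\Lambda_n$ is the likelihood ratio between the two \emph{specified} sequences and is not the model-selection statistic $\lambda_n^{(0:1)}$ of \cref{prop:asymp-ww}, which maximizes over all of $\model_0$ and $\model_1$; the two are related but distinct objects.

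I would obtain the limit of $\Lambda_n$ by the same change-of-measure device used in the proof of \cref{prop:asymp-ww}. Writing $\Lambda_n = \tfrac{n}{2}[\log|\Sigma_n^{(0)}| - \log|\Sigma_n^{(1)}| + \Tr\{((\Sigma_n^{(0)})^{-1} - (\Sigma_n^{(1)})^{-1}) S_n\}]$ and substituting $S_n =_d L_n^{(0)}(I + n^{-1/2} W + o_p(n^{-1/2}))(L_n^{(0)})^{\T}$, one uses that $\Sigma_n^{(0)}$ and $\Sigma_n^{(1)}$ differ only in the $(1,2)$ entry, by $\Delta_{12} = \rho_{13,n}\rho_{23,n}\sqrt{\sigma_{11}\sigma_{22}} = \delta\sqrt{\sigma_{11}\sigma_{22}/n}$, which is $O(n^{-1/2})$. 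Expanding the log-determinant and the trace to second order in $\Delta$, the deterministic part collapses to $-\tfrac{n}{4}\Tr\{((\Sigma_n^{(0)})^{-1}\Delta)^2\} \to -\delta^2/2$, while the stochastic part contributes $\tfrac{\sqrt n}{2}\Tr\{(L_n^{(0)})^{-1}\Delta (L_n^{(0)})^{-\T} W\} \to \delta\, w_{12}$, where $w_{12} \sim \N(0,1)$ is the $(1,2)$ entry of $W$. Hence $\Lambda_n = \delta\, w_{12} - \delta^2/2 + o_p(1) \distconvto \N(-\delta^2/2, \delta^2)$, which is exactly the log-likelihood ratio of the limit experiment. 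A shortcut that sidesteps the expansion is to argue abstractly that $\Lambda_n \distconvto \N(m, s^2)$ for some $m, s^2$ by a triangular-array central limit theorem for the i.i.d.\ sum $\sum_{j} \log(q_n/p_n)(X_j)$ (the Lindeberg condition holding because the per-observation discrepancy is $O(n^{-1})$), and then to pin down $m, s^2$ from two constraints: mutual contiguity forces $m = -s^2/2$, and matching the tensorized Hellinger limit $H^2(P_n^n, Q_n^n) \to 1 - e^{-\delta^2/8}$ from \cref{eqs:hellinger-tensor,eqs:minimax-helliger} against $1 - e^{-s^2/8}$ forces $s^2 = \delta^2$.

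With $\Lambda_n \distconvto \N(-\delta^2/2, \delta^2)$ in hand, the two-point experiments converge to $(\N(0,1), \N(\delta,1))$, which is the assertion. As a check that reconciles this with \cref{prop:asymp-ww}, the likelihood-ratio statistic of the limit experiment is $2\{\log p_0(Z) - \log p_1(Z)\} = \delta^2 - 2\delta Z$, with law $\N(\delta^2, 4\delta^2)$ under $\mu = 0$ and $\N(-\delta^2, 4\delta^2)$ under $\mu = \delta$, matching the asymptotic law $\N((-1)^i\delta^2, (2\delta)^2)$ of $\lambda_n^{(0:1)}$ under $\model_i$. The main obstacle is precisely that, in contrast to the weak-strong regime, the tangent cones of $\model_0$ and $\model_1$ coincide at $\model_{\text{sing}}$, so the family is not locally asymptotically normal and the limit experiment cannot be read off from \citet[Theorem 16.7]{van2000asymptotic}; the equivalence has to be established through the direct likelihood-ratio calculation above, taking care that the object governing convergence of experiments is $\Lambda_n$ and not the maximized statistic $\lambda_n^{(0:1)}$.
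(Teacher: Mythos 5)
Your argument is correct, and it takes a genuinely different route from the paper. The paper offers no standalone proof of this corollary: it simply reads the limit experiment off from \cref{prop:asymp-ww}, observing that the asymptotic law $\N((-1)^i\delta^2,(2\delta)^2)$ of the \emph{maximized} statistic $\lambda_n^{(0:1)}$ coincides with the law of the likelihood ratio statistic $\delta^2-2\delta Z$ for testing $\mu=0$ against $\mu=\delta$ from $Z\sim\N(\mu,1)$. You instead establish convergence of the two-point experiments $(P_n^n,Q_n^n)$ in Le Cam's sense by showing $\Lambda_n=\log(\dd Q_n^n/\dd P_n^n)\distconvto\N(-\delta^2/2,\delta^2)$ under $P_n^n$, via the same change-of-measure expansion used in the proof of \cref{prop:asymp-ww}; your second-order bookkeeping (deterministic part $-\tfrac{n}{4}\Tr\{((\Sigma_n^{(0)})^{-1}\Delta)^2\}\to-\delta^2/2$, stochastic part $\to\delta w_{12}$) checks out, as does the shortcut pinning down the limit by contiguity plus the Hellinger limit $h=\delta^2/8$ from \cref{eqs:minimax-helliger}. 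What your route buys is logical completeness: for a binary experiment, weak convergence of the likelihood ratio between the two specified sequences \emph{is} convergence of experiments, whereas matching the distribution of the maximized statistic $\lambda_n^{(0:1)}$ to that of the limit experiment's test statistic is consistent with equivalence but does not by itself prove it — a distinction you rightly flag, together with the reason the LAN machinery of \citet[Theorem 16.7]{van2000asymptotic} is unavailable here (coincident tangent cones at $\model_{\text{sing}}$). What the paper's route buys is brevity, and the reconciliation you give at the end shows the two are consistent. One cosmetic note: the corollary's statement writes $\rho_{12,n}\rho_{13,n}$ where the weak-weak regime concerns $\rho_{13,n}\rho_{23,n}$; you correctly work with the latter.
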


 \section{Envelope distributions} \label{sec:envelope}
Though it may at first appear otherwise, the asymptotic distributions as obtained in \cref{prop:asymp-weak-strong} and \cref{prop:asymp-ww} are not directly applicable to forming decision rules. This is due to the \emph{non-uniformity} of the asymptotics. 

Firstly, the asymptotic depends on the \emph{regime}: weak-strong versus weak-weak, namely \emph{where} the local sequence converges to. And the law is \emph{discontinuous} between the two regimes. That is, the law in the weak-strong regime (scaled difference of noncentral chi-squares) does not converge to that of the weak-weak regime (Gaussian) as $\rho \rightarrow 0$. Furthermore, a procedure that firstly estimates the regime and then uses the corresponding distribution to form decision boundary, is susceptible to irregularity issues. Additionally, it is difficult to judge if an edge is weak based on whether its confidence interval contains zero without further assumptions, as illustrated by the following example.

\begin{example}
Suppose $X_i \iid \N(\gamma/\sqrt{n}, \sigma^2)$ for $i=1,\cdots,n$. The usual $(1-\alpha)$-level confidence interval for the mean of $X$ is $\bar{X}_n \pm z_{\alpha/2} \hat{\sigma}_n / \sqrt{n}$. The probability that it contains zero is 
\begin{equation*}
\begin{split}
\Pr\left(0 \in (\bar{X}_n \pm z_{\alpha/2} \hat{\sigma}_n / \sqrt{n}) \right) &= \Pr\left(\sqrt{n} \bar{X}_n / \hat{\sigma}_n \in (\pm z_{\alpha/2}) \right) \\
& \rightarrow \Pr(Z + \gamma \in (\pm z_{\alpha/2})) < 1 - \alpha
\end{split}
\end{equation*}
for $\gamma \neq 0$ and $Z \sim \N(0,1)$. A large enough $\gamma$ can be chosen to make this probability arbitrarily small. 
\end{example}

Secondly, given the regime, the distribution depends on the value of a \emph{local parameter} ($\gamma$ for strong-weak and $\delta$ for weak-weak), which determines \emph{how} the local sequence converges. Due to the $\sqrt{n}$ factor, the standard error for its estimator does not vanish and in general the local parameter cannot be consistently estimated. The reader is referred to \citet{berger1994p,andrews2001testing} for discussions in the literature on the treatment of asymptotic distributions involving nuisance parameters that are not point-identified. Here we take a different approach, presented as follows. 

The non-uniformity of asymptotic distributions motivates us to seek a procedure that \emph{circumvents} the inference on the regime and the local parameter. In this section, we study the ``extremal'' distributions arising from the asymptotic distributions as the local parameter varies in $\mathbb{R}$. 

\begin{definition} \label{def:envelope}
Given a family of distribution functions $\{F_{h}: h \in \mathcal{H}\}$ on $\mathbb{R}$, define
\begin{equation*}
\bar{F}^{\ast}(x) = \sup_{h \in \mathcal{H}} F_h(x),
\end{equation*}
and
\begin{equation}
\bar{F}(x) = \begin{cases} \bar{F}^{\ast}(x), \quad &\text{$\bar{F}^{\ast}$ is continuous at $x$} \\
\lim_{y \rightarrow x^{+}} \bar{F}^{\ast}(y), \quad &\text{$\bar{F}^{\ast}$ is discontinuous at $x$}
\end{cases}.
\end{equation}
We call $\bar{F}$ the envelope distribution of $\{F_{h}: h \in \mathcal{H}\}$ if $\bar{F}$ is a valid distribution function.
\end{definition}

\begin{lemma}
$\bar{F}^{\ast}(x)$ is left-continuous if every $F_h(x)$ for $h \in \mathcal{H}$ is continuous.
\end{lemma}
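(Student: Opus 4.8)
The plan is to establish left-continuity of $\bar{F}^{\ast}(x) = \sup_{h \in \mathcal{H}} F_h(x)$ directly from the definition of left-continuity, exploiting that a supremum of functions inherits one-sided continuity properties more readily than two-sided continuity. Recall that left-continuity at $x$ means $\lim_{y \to x^{-}} \bar{F}^{\ast}(y) = \bar{F}^{\ast}(x)$; equivalently, since $\bar{F}^{\ast}$ is monotone nondecreasing (each $F_h$ is a distribution function, hence nondecreasing, and the pointwise supremum of nondecreasing functions is nondecreasing), the left-limit $\bar{F}^{\ast}(x^{-})$ always exists, and we need only show $\bar{F}^{\ast}(x^{-}) \geq \bar{F}^{\ast}(x)$ (the reverse inequality being automatic from monotonicity).

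First I would fix $x \in \mathbb{R}$ and let $\varepsilon > 0$ be arbitrary. By the definition of the supremum, I can pick some $h_0 \in \mathcal{H}$ such that $F_{h_0}(x) > \bar{F}^{\ast}(x) - \varepsilon/2$. Then I would invoke the hypothesis that each $F_h$ is continuous, in particular $F_{h_0}$ is left-continuous at $x$, so there exists $\eta > 0$ with $F_{h_0}(y) > F_{h_0}(x) - \varepsilon/2$ for all $y \in (x - \eta, x)$. Chaining these two inequalities gives, for such $y$,
\begin{equation*}
\bar{F}^{\ast}(y) \geq F_{h_0}(y) > F_{h_0}(x) - \varepsilon/2 > \bar{F}^{\ast}(x) - \varepsilon,
\end{equation*}
where the first inequality is just the definition of the pointwise supremum. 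Taking $y \to x^{-}$ yields $\bar{F}^{\ast}(x^{-}) \geq \bar{F}^{\ast}(x) - \varepsilon$, and since $\varepsilon > 0$ was arbitrary, $\bar{F}^{\ast}(x^{-}) \geq \bar{F}^{\ast}(x)$.

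Combined with $\bar{F}^{\ast}(x^{-}) \leq \bar{F}^{\ast}(x)$ from monotonicity, this gives $\bar{F}^{\ast}(x^{-}) = \bar{F}^{\ast}(x)$, which is exactly left-continuity at $x$; since $x$ was arbitrary, $\bar{F}^{\ast}$ is left-continuous everywhere. I do not expect a serious obstacle here: the argument is a standard $\varepsilon$-extraction from the supremum followed by the left-continuity of the chosen $F_{h_0}$. The one point to handle carefully is the role of the index set $\mathcal{H}$ being arbitrary (possibly uncountable) — but this causes no trouble because the $\varepsilon/2$ argument selects only a single witness $h_0$, so no uniformity over $\mathcal{H}$ is needed. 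It is worth emphasizing (and I would note explicitly) that this argument does not extend to right-continuity: the supremum of right-continuous functions need not be right-continuous, which is precisely why \Cref{def:envelope} must manually right-continuify $\bar{F}^{\ast}$ to obtain a valid distribution function $\bar{F}$.
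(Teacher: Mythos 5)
Your proof is correct and uses essentially the same argument as the paper: extract a single near-maximizing index $h_0$ within $\varepsilon/2$ of the supremum, invoke the (left-)continuity of that one $F_{h_0}$, and chain the inequalities. The only cosmetic difference is that you phrase the conclusion via the left-limit of a monotone function rather than via an explicit $|\bar{F}^{\ast}(x)-\bar{F}^{\ast}(x-\epsilon)|\leq\delta$ bound; the remark that the argument fails for right-continuity is a nice observation consistent with why \cref{def:envelope} right-continuifies $\bar{F}^{\ast}$.
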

\begin{proof}
Fix any $x$ and $\delta > 0$, for $\epsilon > 0$ we have $|\bar{F}^{\ast}(x) - \bar{F}^{\ast}(x - \epsilon)| = \sup_{h} F_h(x) - \sup_{h} F_h(x - \epsilon)$. By definition of supremum, there exists $h' \in \mathcal{H}$ such that $F_{h'}(x) \geq \sup_{h} F_h(x) - \delta / 2$. Hence, $|\bar{F}^{\ast}(x) - \bar{F}^{\ast}(x - \epsilon)| \leq \delta/2 + F_{h'}(x) -  F_{h'}(x - \epsilon) $. By continuity of $F_{h'}$, choosing $\epsilon > 0$ such that $F_{h'}(x) -  F_{h'}(x - \epsilon) \leq \delta / 2$ shows that $\bar{F}^{\ast}(x)$ is left-continuous. 
\end{proof}

\begin{lemma} \label{lem:envelope-valid}
If $\bar{F}^{\ast}(x) \rightarrow 0$ as $x \rightarrow -\infty$, then $\bar{F}(x)$ is a valid distribution function.
\end{lemma}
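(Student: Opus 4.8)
The plan is to check the four defining properties of a cumulative distribution function for $\bar{F}$: that it is $[0,1]$-valued and non-decreasing, right-continuous, and has the correct tail limits $\bar{F}(x) \to 0$ as $x \to -\infty$ and $\bar{F}(x) \to 1$ as $x \to +\infty$. The starting point is that $\bar{F}^{\ast} = \sup_{h \in \mathcal{H}} F_h$ is a pointwise supremum of non-decreasing $[0,1]$-valued functions, hence is itself non-decreasing and valued in $[0,1]$; consequently it admits one-sided limits at every point and has at most countably many discontinuities. The crucial reformulation I would make is that $\bar{F}$ from \cref{def:envelope} is precisely the right-continuous modification of $\bar{F}^{\ast}$: for any $x$, monotonicity gives $\lim_{y \to x^{+}} \bar{F}^{\ast}(y) = \inf_{y > x} \bar{F}^{\ast}(y)$, and this right limit equals $\bar{F}^{\ast}(x)$ exactly when $\bar{F}^{\ast}$ is continuous at $x$, so both branches of the definition collapse into the single expression $\bar{F}(x) = \inf_{y > x} \bar{F}^{\ast}(y)$.

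With this representation in hand, monotonicity of $\bar{F}$ is immediate from the inclusion $\{y : y > x_2\} \subseteq \{y : y > x_1\}$ for $x_1 < x_2$, which yields $\bar{F}(x_1) \le \bar{F}(x_2)$, while $[0,1]$-valuedness is inherited directly from $\bar{F}^{\ast}$. For right-continuity I would argue as follows: given $x$ and $\epsilon > 0$, choose $y_0 > x$ with $\bar{F}^{\ast}(y_0) < \bar{F}(x) + \epsilon$ (possible by the infimum representation); then every $x'$ with $x < x' < y_0$ satisfies $\bar{F}(x) \le \bar{F}(x') \le \bar{F}^{\ast}(y_0) < \bar{F}(x) + \epsilon$, so $\bar{F}(x') \downarrow \bar{F}(x)$ as $x' \downarrow x$. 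Note that it is right-continuity, not the left-continuity established in the preceding lemma, that is required here, and that no continuity of the individual $F_h$ is needed.

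It remains to pin down the two tail limits. For $x \to +\infty$, fixing any $h \in \mathcal{H}$ and using $\bar{F}^{\ast}(x) \ge F_h(x) \to 1$ together with $\bar{F}^{\ast} \le 1$ forces $\bar{F}^{\ast}(x) \to 1$; since $\bar{F}^{\ast} \le \bar{F} \le 1$ (the first inequality from $\bar{F}(x) = \inf_{y>x}\bar{F}^{\ast}(y) \ge \bar{F}^{\ast}(x)$), we get $\bar{F}(x) \to 1$. For $x \to -\infty$, this is exactly where the hypothesis enters: from $\bar{F}(x) = \inf_{y>x} \bar{F}^{\ast}(y) \le \bar{F}^{\ast}(x+1)$ and the assumption $\bar{F}^{\ast}(x+1) \to 0$, together with $\bar{F} \ge 0$, we conclude $\bar{F}(x) \to 0$. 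I would emphasize that the assumption $\bar{F}^{\ast}(x) \to 0$ cannot be dropped: for the location family $\{\N(h,1) : h \in \mathbb{R}\}$ one computes $\bar{F}^{\ast} \equiv 1$, which is not a distribution function.

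The only genuinely non-routine step is the reformulation of $\bar{F}$ as the right-continuous modification of the monotone function $\bar{F}^{\ast}$, together with the ensuing verification of right-continuity; everything else is bookkeeping. I expect the main obstacle to be carefully justifying that the two cases in \cref{def:envelope} coincide with the single right-limit expression $\inf_{y>x}\bar{F}^{\ast}(y)$, since it is this identification that both makes the regularization well defined and lets the standard monotone-function arguments go through verbatim.
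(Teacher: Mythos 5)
Your proof is correct and follows essentially the same route as the paper's: both recognize $\bar{F}$ as the right-continuous modification of the non-decreasing, $[0,1]$-valued function $\bar{F}^{\ast}$ and then check the tail limits, with the $+\infty$ limit coming from $\bar F \geq \bar F^\ast \geq F_h \to 1$ for any fixed $h$ and the $-\infty$ limit from the hypothesis. The only difference is one of detail: where the paper appeals to the standard fact that redefining a monotone function at its (countably many) discontinuities by right limits yields a right-continuous function, you verify right-continuity explicitly via the representation $\bar{F}(x) = \inf_{y>x}\bar{F}^{\ast}(y)$, which is a correct and slightly more self-contained rendering of the same step.
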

\begin{proof}
Given any $x \leq x'$, $\sup_{h} F_h(x) \leq \sup_{h} F_h(x')$ by monotonicity of every $F_h$. Since $\bar{F}^{\ast}$ is non-decreasing, by \citet[Theorem 3.23]{Folland1999}, the set of points at which $\bar{F}^{\ast}$ is discontinuous is countable. By redefining the function value at these points to be their right limits, $\bar{F}$ is right continuous. Also, $\bar{F}(x) \geq \bar{F}^{\ast}(x) \rightarrow 1$ as $x \rightarrow +\infty$ since every $F_h(x) \rightarrow 1$. Finally, as $x \rightarrow -\infty$ if $\bar{F}^{\ast}(x) \rightarrow 0$ , then $\bar{F}(x) \rightarrow 0$. $\bar{F}$ is a distribution function.
\end{proof}

\subsection{The weak-weak regime}
\begin{proposition} \label{prop:envelope-ww}
Let $G_{\delta} = \{\N(\delta^2, (2\delta)^2): \delta \in \mathbb{R}\}$ be the asymptotic distributions for the weak-weak regime under $\model_0 \setminus \model_1$. The envelope of $\{G_{\delta}\}$ is an equal-probability mixture of $(-\chi_1^2)$ and a point mass at zero, namely
\begin{equation}
\bar{G}(x) = \frac{1}{2} \left(1-F_{\chi_1^2}(-x)\right) \I_{x < 0} + \frac{1}{2} \I_{x \geq 0}
\end{equation}
The corresponding envelope under $\model_1 \setminus \model_0$ is distributed as its negation.
\end{proposition}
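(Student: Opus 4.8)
The plan is to reduce the two-sided supremum over $\delta \in \mathbb{R}$ to a single one-parameter optimization and then read off the resulting law. Writing $G_\delta = \N(\delta^2, (2\delta)^2)$, its CDF for $\delta \neq 0$ is $F_\delta(x) = \Phi\!\left(\frac{x - \delta^2}{2|\delta|}\right)$, where $\Phi$ is the standard normal CDF. Since $F_\delta$ depends on $\delta$ only through $t := |\delta| > 0$, I would set $\bar{F}^{\ast}(x) = \sup_{t > 0} \Phi(g_x(t))$ with $g_x(t) := \frac{x}{2t} - \frac{t}{2}$, and then invoke \cref{def:envelope} and \cref{lem:envelope-valid}. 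The degenerate endpoint $\delta = 0$ (a point mass at $0$) need not be handled separately, because the right-continuity correction in \cref{def:envelope} will reproduce its contribution as a limit.

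The core step is the elementary optimization of $g_x$ over $t > 0$, split by the sign of $x$. For $x > 0$, $g_x$ is strictly decreasing with $g_x(0^+) = +\infty$, so $\bar{F}^{\ast}(x) = 1$; this reflects the degeneration $G_\delta \distconvto$ a point mass at $0$ as $\delta \to 0$, which piles all mass below any positive level. For $x < 0$, solving $g_x'(t) = -\frac{x}{2t^2} - \frac{1}{2} = 0$ gives the interior maximizer $t^{\ast} = \sqrt{-x}$ (a maximum since $g_x''<0$ there) with $g_x(t^{\ast}) = -\sqrt{-x}$, hence $\bar{F}^{\ast}(x) = \Phi(-\sqrt{-x})$. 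I would then confirm $\bar{F}^{\ast}(x) \to 0$ as $x \to -\infty$, so that \cref{lem:envelope-valid} certifies $\bar{G}$ is a genuine distribution function.

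To identify $\bar{G}$, I would rewrite the Gaussian expression in $\chi^2_1$ form via $F_{\chi^2_1}(y) = 2\Phi(\sqrt{y}) - 1$, which yields $\Phi(-\sqrt{-x}) = \tfrac{1}{2}\big(1 - F_{\chi^2_1}(-x)\big)$ for $x < 0$. Since $\bar{F}^{\ast}$ has a single jump at $x = 0$ (left limit $\tfrac12$, right limit $1$), the right-continuity correction in \cref{def:envelope} places an atom of mass $\tfrac12$ at $0$, while the continuous part on $(-\infty,0)$ carries the remaining mass $\tfrac12$ and is governed by $-\chi^2_1$. This is exactly the equal-probability mixture of $-\chi^2_1$ and a point mass at $0$, establishing the claim for $\model_0 \setminus \model_1$.

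Finally, for $\model_1 \setminus \model_0$ I would use the reflection symmetry of \cref{prop:asymp-ww}: there $\lambda_n^{(0:1)} \distconvto \N(-\delta^2, (2\delta)^2)$, the negation of $G_\delta$. The clean route is to apply the construction to $\lambda_n^{(1:0)} = -\lambda_n^{(0:1)}$, whose weak-weak law under $\model_1 \setminus \model_0$ is again $G_\delta$; its envelope is therefore $\bar{G}$, so the envelope of $\lambda_n^{(0:1)}$ under $\model_1 \setminus \model_0$ is the law of $-Y$ with $Y \sim \bar{G}$. I expect the main obstacle to be conceptual rather than computational: applying the literal supremum-of-CDFs directly to the $\model_1$ family is degenerate, because as $\delta \to \infty$ the mean $-\delta^2 \to -\infty$ outpaces the spread $2|\delta|$ and every $F_\delta(x) \to 1$, forcing $\sup_\delta F_\delta \equiv 1$. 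The resolution, and the point requiring care, is that the envelope must be oriented toward the tail relevant to the decision (the lower tail against $\model_0$, the upper tail against $\model_1$), which is precisely what the reflection enforces; the other delicate bookkeeping is the atom at $x=0$ produced by the region $x>0$ where the supremum saturates at $1$.
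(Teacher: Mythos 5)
Your proof is correct and takes essentially the same route as the paper's: the supremum over $\delta$ is attained at $\delta^{\ast}=\sqrt{-x}$ for $x<0$ (you locate it by calculus where the paper completes an AM--GM-type bound), giving $\Phi(-\sqrt{-x})$ there, with the mass-$1/2$ atom at zero arising from the saturated region $x\geq 0$ and the $\model_1\setminus\model_0$ case obtained by negation. Your explicit observation that the literal supremum of CDFs over the family $\N(-\delta^2,(2\delta)^2)$ degenerates to the constant $1$, so that the $\model_1$ envelope must be understood as the law of $-Y$ for $Y\sim\bar{G}$, is a useful clarification of what the paper compresses into ``follows from symmetry.''
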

\begin{proof}
It suffices to consider $\delta \geq 0$. Given any $x < 0$, 
\begin{equation*}
\sup_{\delta} \Pr(\delta^2 + 2 \delta Z \leq x) = \sup_{\delta >0} \Phi \left(\frac{x - \delta^2}{2 \delta} \right) = \sup_{\delta >0} \Phi \left ( -\left [ \frac{-x}{2 \delta} + \frac{\delta}{2} \right ] \right ) = \Phi(-\sqrt{-x}),
\end{equation*}
where $\delta^{\ast} = \sqrt{-x}$ is the maximizer; Given any $x \geq 0$, $\delta = 0$ maximizes the probability to one. Hence, the envelope CDF is
\begin{equation*}
\bar{G}(x) = \begin{cases}
\Phi(-\sqrt{-x}), \quad x < 0 \\
1, \quad x \geq 0
\end{cases},
\end{equation*}
from which it follows that
\begin{equation*}
\bar{g}(x) = \bar{G}'(x) = \frac{1}{2} f_{\chi_1^2}(-x) \I_{x < 0} + \frac{1}{2} \delta_{0}(x).
\end{equation*} 
The envelope for $\model_1 \setminus \model_0$ follows from symmetry. 
\end{proof}

Since when $\model_0$ is true, the region for rejecting $\model_0$ should take the form $(-\infty, r)$ for some $r < 0$, only the negative part of $\bar{G}$ is relevant for decision making. It follows from \cref{prop:envelope-ww} that the negative part of $\bar{G}$ is distributed as $\chi_1^2$. The formation of the envelope is visualized in \cref{fig:chi-sq-envelope}, which aligns with the behavior observed in \cref{fig:row-gaussians}, where as $\delta$ grows, the quantiles for $\alpha=0.05$ first moves outward for $\delta \in (0.5, 1.64)$ and then moves inward for $\delta \in (1.64, \infty)$.

\begin{figure}[!ht]
\includegraphics[width=0.8\textwidth]{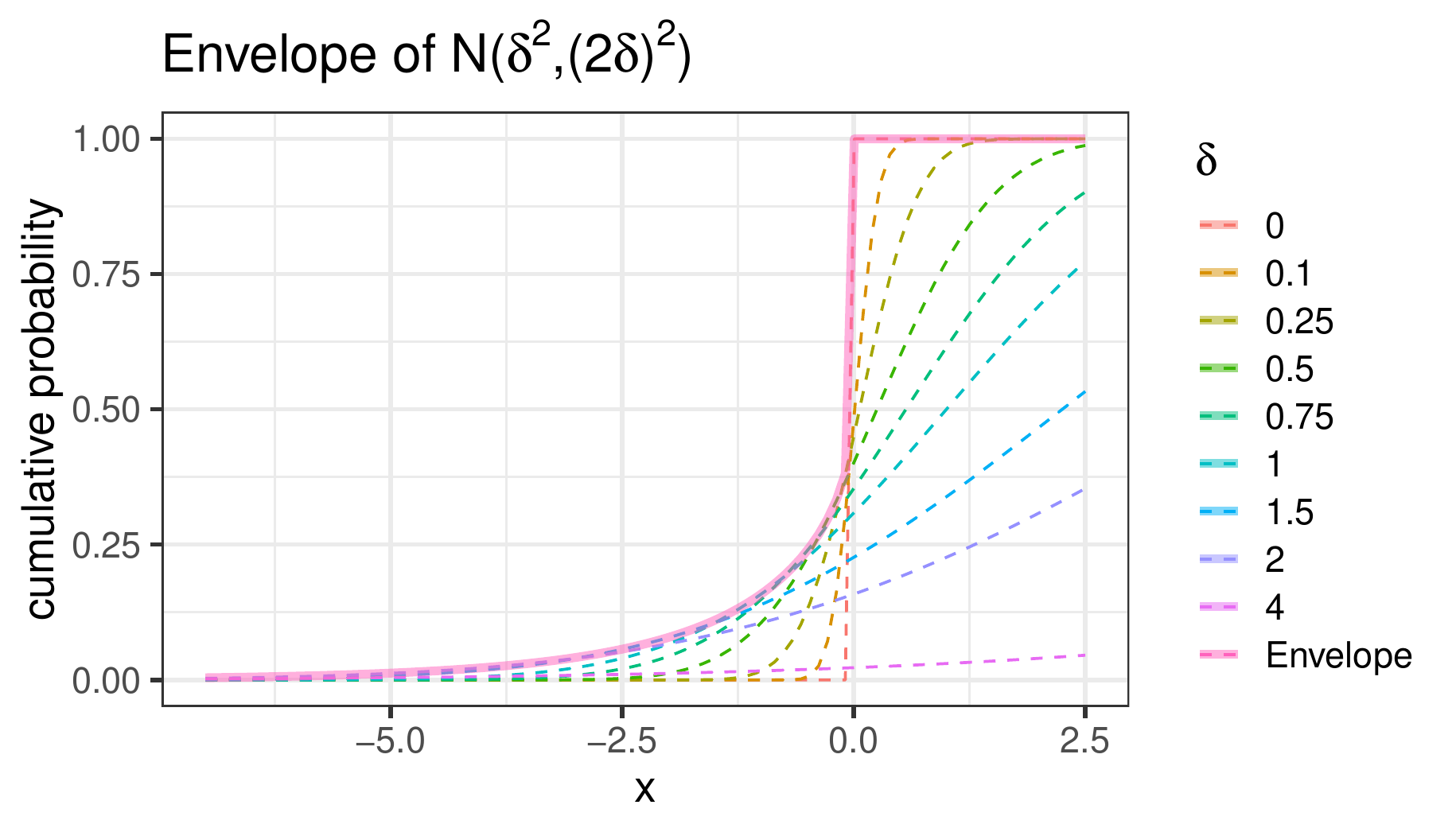}
\caption{The envelope CDF $\bar{G}$ for the weak-weak regime.}
\label{fig:chi-sq-envelope}
\end{figure}

\subsection{The weak-strong regime}
Now we study the envelope distributions under the weak-strong regime. We first observe that the envelope distributions, if they exist, must be symmetric for \cref{eqs:asymp-dist-01} and \cref{eqs:asymp-dist-10}, in the sense that they are distributed as the negation of each other. The symmetry holds because the two local parameters are related by a factor of $1 / \sqrt{1-\rho^2}$ (see \cref{fig:experiments}), and hence the suprema are taken over the same set of laws up to a difference in the sign. Fix $\rho$, let $\{F_{\rho, \gamma}: \gamma \in \mathbb{R}\}$ be the family of asymptotic distributions in the weak-strong regime under $\model_0 \setminus \model_1$ as given in \cref{eqs:asymp-dist-01}. Let $\bar{F}_{\rho}$ be its envelope distribution function. 
\begin{proposition}
$\bar{F}_{\rho}$ is a valid distribution function for $|\rho| \in (0, 1]$.
\end{proposition}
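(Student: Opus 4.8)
By Lemma~\ref{lem:envelope-valid}, proving that $\bar{F}_{\rho}$ is a valid distribution function reduces to verifying a single tail condition: that $\bar{F}_{\rho}^{\ast}(x) \to 0$ as $x \to -\infty$, where $\bar{F}_{\rho}^{\ast}(x) = \sup_{\gamma \in \mathbb{R}} F_{\rho, \gamma}(x)$. (The left-continuity and the other defining properties of a distribution function are already handled by the two preceding lemmas, since each $F_{\rho,\gamma}$ is continuous.) So the plan is to control this pointwise supremum over $\gamma$ and show it does not stay bounded away from zero in the left tail.

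\medskip

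\noindent\emph{Plan.} First I would write out the random variable in \cref{eqs:asymp-dist-01} explicitly as
\[
\rho\!\left[\left(Z_1 + \tfrac{\gamma}{\sqrt{2(1-\rho)}}\right)^{\!2} - \left(Z_2 + \tfrac{\gamma}{\sqrt{2(1+\rho)}}\right)^{\!2}\right],
\]
a scaled difference of two independent noncentral $\chi^2_1$ variables with noncentrality offsets $a(\gamma) = \gamma/\sqrt{2(1-\rho)}$ and $b(\gamma)=\gamma/\sqrt{2(1+\rho)}$. Since the law depends only on $|\gamma|$, I take $\gamma \ge 0$. For $x < 0$, the event $\{(\text{value}) \le x\}$ requires the (positive) term $\rho(Z_2+b)^2$ to exceed $\rho(Z_1+a)^2$ by at least $|x|$. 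The key structural fact is that as $\gamma \to \infty$ both offsets grow, but they grow at different rates because $a(\gamma) > b(\gamma)$ for $\rho \in (0,1)$; pushing $\gamma$ large inflates \emph{both} squared terms, so one cannot make the difference very negative and simultaneously keep it probable. I would make this precise by bounding $F_{\rho,\gamma}(x)$ uniformly: for the difference to be $\le x < 0$ one needs $(Z_2+b)^2 \ge (Z_1+a)^2 + |x|/\rho$, and since $a \ge b$, a union/conditioning argument on the Gaussian tails shows $\sup_{\gamma} F_{\rho,\gamma}(x) \le C(\rho)\,\Phi(-c(\rho)\sqrt{|x|})$ or a comparable bound vanishing as $x \to -\infty$.

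\medskip

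\noindent The cleanest route is probably to optimize over $\gamma$ for fixed $x$: treat $\gamma \mapsto F_{\rho,\gamma}(x)$ as a function whose supremum I evaluate by the same device used in \cref{prop:envelope-ww}, namely recognizing that the maximizing $\gamma^{\ast}(x)$ grows like $\sqrt{|x|}$ and substituting back yields an explicit envelope of the form $\Phi(-\,\text{const}\cdot\sqrt{|x|})$ in the deep left tail, which $\to 0$. Alternatively, a softer argument suffices since I only need the limit, not the exact envelope: bound the supremum by integrating the worst-case density, or observe that $\E[\exp(t\,\cdot\text{value})]$ stays finite and bounded in $\gamma$ for a suitable $t>0$ (the moment generating function of a difference of noncentral chi-squares), so a Chernoff bound $F_{\rho,\gamma}(x) \le e^{t x}\sup_{\gamma}\E[e^{t\cdot\text{value}}]$ gives uniform exponential decay in the left tail.

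\medskip

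\noindent The main obstacle is confirming that the supremum over $\gamma$ does not conspire to keep $\bar{F}_{\rho}^{\ast}(x)$ bounded away from $0$: a priori, larger $\gamma$ shifts mass, and one must rule out that for every fixed very negative $x$ some large $\gamma$ still places non-negligible probability below $x$. This is exactly where the asymmetry $a(\gamma)\neq b(\gamma)$ (equivalently $|\rho|>0$) is essential — it is what prevents the two chi-squared terms from canceling and producing heavy negative mass uniformly in $\gamma$, and it explains why the statement requires $|\rho|\in(0,1]$ rather than $|\rho|\ge 0$. I would therefore make the role of $\rho$ explicit in whatever bound I derive, and check the boundary case $|\rho|=1$ separately (where $b=0$ and the second term degenerates to a central $\chi^2_1$, simplifying the tail estimate).
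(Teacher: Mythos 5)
Your reduction is exactly the paper's: by \cref{lem:envelope-valid} (together with the monotonicity and left-continuity already established) everything hinges on showing $\sup_{\gamma}F_{\rho,\gamma}(x)\to 0$ as $x\to-\infty$, and you correctly identify the danger that large $\gamma$ might keep mass in the far left tail. Where you diverge is in how you kill that supremum. The paper writes the density of the scaled difference of noncentral $\chi^2_1$ variables as a convolution integral, bounds $\cosh$ by $\exp$, and then \emph{completes the square in $\gamma$} inside the integrand to obtain a density bound uniform in $\gamma$; after the elementary inequality $2\sqrt{t(t+v^2)}\le 2t+v^2$ this reduces to a modified Bessel function $K_0$, whose asymptotics give exponential decay of the tail. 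Of your proposed routes, the Chernoff/MGF one is genuinely viable and arguably cleaner: writing $W=\rho[(Z_1+a)^2-(Z_2+b)^2]$ with $a^2=\gamma^2/(2(1-\rho))$, $b^2=\gamma^2/(2(1+\rho))$, one computes
\begin{equation*}
\E e^{-tW}=\frac{1}{\sqrt{(1+2t\rho)(1-2t\rho)}}\exp\!\left(t\rho\gamma^2\left[\frac{1}{2(1+\rho)(1-2t\rho)}-\frac{1}{2(1-\rho)(1+2t\rho)}\right]\right),
\end{equation*}
and the bracket is $\le 0$ precisely when $t\le 1/2$, so any fixed $t\in(0,1/2)$ gives $\sup_\gamma F_{\rho,\gamma}(x)\le e^{tx}(1-4t^2\rho^2)^{-1/2}\to 0$. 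This plays the same role as the paper's completion of the square (both exploit that the quadratic in $\gamma$ has nonpositive leading behavior), but avoids the Bessel-function machinery entirely. Two caveats: your proposal offers this only as one of several unexecuted alternatives, and the vaguer ``union/conditioning on Gaussian tails'' route would need real work, since for fixed very negative $x$ the maximizing $\gamma$ does grow with $|x|$ and a naive union bound does not obviously close; also, your remark that at $\rho=1$ ``$b=0$'' is off in the parametrization of \cref{eqs:asymp-dist-01} (there the \emph{first} offset degenerates; it is in the \cref{eqs:asymp-dist-10} parametrization, which the paper switches to, that one noncentrality vanishes). Neither caveat breaks the argument, but you should commit to and carry out one route — the MGF bound above is the one I would finish.
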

\begin{proof}
Note since $\bar{F}_{\rho} = \bar{F}_{-\rho}$, it suffices to consider $\rho \in (0,1]$. 
First consider $\varphi_{\rho, \gamma}(x)$, the density function for $X^2 - Y^2$ with $X \sim \N\left(\mu_1 = \gamma \sqrt{\frac{1-\rho}{2}}, 1 \right)$ and $Y \sim \N \left(\mu_2=\gamma \sqrt{\frac{1+\rho}{2}}, 1 \right)$ for $\gamma \in \mathbb{R}$ and $\rho \in (0, 1]$.  Since $p(X^2 - Y^2 = v^2, Y^2=t) = p(Y^2=t) p(X^2 = t + v^2 )$, the density $\varphi_{\rho, \gamma}$ has the following integral representation from marginalization
\begin{equation*}
\begin{split}
\varphi_{\rho, \gamma}(v^2) &= \int_{0}^{\infty} \chi_1^2(t; \mu_2^2) \chi_1^2(v^2 + t; \mu_1^2) \dd t\\
&= \frac{1}{2\pi} e^{-v^2 / 2 -\gamma^2 / 2} \int_{0}^{\infty} \frac{\exp(-t) \cosh(\gamma \sqrt{\frac{1+\rho}{2}} \sqrt{t}) \cosh(\gamma \sqrt{\frac{1 - \rho}{2}} \sqrt{t + v^2})}{\sqrt{t(t + v^2)}} \dd t.
\end{split}
\end{equation*}
Recall that $\lesssim$ allows for a positive multiplicative constant. Using $\cosh(x) < \exp(x)$ for $x>0$, we have
\begin{equation*}
\begin{split}
\varphi_{\rho,\gamma}(v^2) & \lesssim e^{-v^2/2 - \gamma^2/2} \int_{0}^{\infty} \frac{e^{-t} \cosh(\gamma \sqrt{\frac{1+\rho}{2}} \sqrt{t}) \cosh(\gamma \sqrt{\frac{1-\rho}{2}} \sqrt{t+v^2})}{\sqrt{t(t+v^2)}} \dd t \\
&< e^{-v^2/2} \int_{0}^{\infty} \frac{\exp\left(-t - \gamma^2/2 + \gamma \sqrt{\frac{1+\rho}{2}} \sqrt{t} + \gamma \sqrt{\frac{1-\rho}{2}} \sqrt{t+v^2} \right)}{\sqrt{t(t+v^2)}} \dd t.
\end{split}
\end{equation*}
We note that
\begin{equation*}
- \gamma^2/2 + \gamma \sqrt{\frac{1+\rho}{2}} \sqrt{t} + \gamma \sqrt{\frac{1-\rho}{2}} \sqrt{t+v^2} \leq \frac{1}{2} \left( \sqrt{\frac{1+\rho}{2}} \sqrt{t}  +  \sqrt{\frac{1-\rho}{2}} \sqrt{t+v^2} \right)^2
\end{equation*}
by completing the square in $\gamma$. It then follows that
\begin{equation*} \label{eqs:density-bound}
\begin{split}
\varphi_{\rho, \gamma}(v^2) &< e^{-v^2/2} \int_{0}^{\infty} \frac{\exp\left [ -t + \frac{1}{2} \left(t + \frac{1-\rho}{2} v^2 + \sqrt{1-\rho^2} \sqrt{t(t+v^2)} \right)\right]}{\sqrt{t(t+v^2)}} \dd t \\
&= e^{-\frac{1+\rho}{4} v^2} \int_{0}^{\infty} \frac{\exp \left ( -\frac{t}{2} + \frac{\sqrt{1-\rho^2}}{2} \sqrt{t(t+v^2)}\right)}{\sqrt{t(t+v^2)}} \dd t \\
&\leq e^{-\frac{1+\rho}{4} v^2} \int_{0}^{\infty} \frac{\exp \left ( -\frac{t}{2} + \frac{\sqrt{1-\rho^2}}{2} (t + v^2/2) \right)}{\sqrt{t(t+v^2)}} \dd t \\
&= e^{-\frac{1+\rho - \sqrt{1 - \rho^2}}{4} v^2} \int_{0}^{\infty} \frac{\exp \left ( -\frac{1 - \sqrt{1-\rho^2}}{2}t \right)}{\sqrt{t(t+v^2)}} \dd t \\
&= e^{-\frac{\rho}{4} v^2} K_0\left(\frac{1-\sqrt{1-\rho^2}}{4} v^2\right),
\end{split}
\end{equation*}
where we used $2 \sqrt{t(t+v^2)} \leq 2t + v^2$ in the third line. $K_{\nu}(\cdot)$ is the modified Bessel function of the second kind, and has the following asymptotic expansion for $z > 0$ \citep[Page 378]{abramowitz1972handbook}
\begin{equation*}
K_{\nu}(z) = \sqrt{\frac{\pi}{2z}} e^{-z} (1 + \frac{4 \nu^2 - 1}{8z} + O(z^{-2})).
\end{equation*}
Hence for large $v^2$, we have
\begin{equation*}
\varphi_{\rho, \gamma}(v^2) \lesssim \frac{\exp\left( - \frac{1 + \rho - \sqrt{1 - \rho^2}}{4} v^2 \right)}{(1 - \sqrt{1 - \rho^2}) v^2}.
\end{equation*}
Recall that $\{F_{\rho, \gamma}: \gamma \in \mathbb{R}\}$ is the family of distributions for the RHS of \cref{eqs:asymp-dist-01}. With $\gamma' = \sqrt{1 - \rho^2} \gamma$, 
\begin{equation*}
\rho \left [ \left(Z_1 + \gamma \sqrt{\frac{1+\rho}{2}} \right)^2 - \left(Z_2 + \gamma \sqrt{\frac{1 - \rho}{2}} \right)^2 \right ] \sim F_{\rho, \gamma'}.
\end{equation*}
It follows that the density function
\begin{equation}
f_{\rho, \gamma'}(-v^2) = \varphi_{\rho, \gamma}(v^2 / \rho) \lesssim \frac{\rho \exp\left( - \frac{1 + \rho - \sqrt{1 - \rho^2}}{4 \rho} v^2 \right)}{(1 - \sqrt{1 - \rho^2}) v^2},
\end{equation}
where the exponent $\frac{1 + \rho - \sqrt{1 - \rho^2}}{4 \rho} \in (1/4, 1/2]$. By \cref{def:envelope}, we have
\begin{equation*}
\begin{split}
\bar{F}^{\ast}_{\rho}(-v^2) &= \sup_{\gamma' \in \mathbb{R}} F_{\rho, \gamma'}(-v^2) \\
& = \sup_{\gamma' \in \mathbb{R}} \int_{v^2}^{\infty} f_{\rho, \gamma'} (-u) \dd u \lesssim \int_{v^2}^{\infty} \frac{\rho \exp\left( - \frac{1 + \rho - \sqrt{1 - \rho^2}}{4 \rho} u \right)}{(1 - \sqrt{1 - \rho^2}) u} \dd u < \infty,
\end{split}
\end{equation*}
and hence $\bar{F}^{\ast}_{\rho}(-v^2) \rightarrow 0$ as $v \rightarrow \infty$. By \cref{lem:envelope-valid}, $\bar{F}_{\rho}$ is a distribution function for every $\rho \in (0, 1]$.
\end{proof}

The following proposition shows that $F_{\rho, \gamma=0}$ constitutes the envelope for the positive part of $\bar{F}_\rho$. 

\begin{proposition} \label{prop:positive-part-envelope}
The positive part of $\bar{F}_{\rho}$ for $|\rho| \in (0, 1]$ is distributed as the positive part of $\rho(Z_1^2 - Z_2^2)$ for $Z_1, Z_2 \iid \N(0,1)$. 
\end{proposition}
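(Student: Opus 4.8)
The plan is to show that for every $x>0$ the supremum $\sup_{\gamma\in\mathbb{R}}F_{\rho,\gamma}(x)$ is attained at $\gamma=0$, so that $\bar{F}^{\ast}_{\rho}$ coincides with the continuous function $F_{\rho,0}$ on $(0,\infty)$; since $F_{\rho,0}$ is the law of $\rho(Z_1^2-Z_2^2)$, this gives the claim. Because the law in \cref{eqs:asymp-dist-01} depends on $\gamma$ only through $|\gamma|$, it suffices to treat $\gamma\ge 0$, and I would fix $\rho\in(0,1)$ (the endpoint $\rho=1$ following by a limiting argument, in which the shifted means diverge and force $F_{\rho,\gamma}(x)\to 0$ for $\gamma\neq 0$). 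Writing $a=1/\sqrt{2(1-\rho)}$ and $b=1/\sqrt{2(1+\rho)}$, the decisive structural step is to factor the difference of squares,
\begin{equation*}
X_\gamma/\rho=(Z_1+a\gamma)^2-(Z_2+b\gamma)^2=\bigl(S+(a+b)\gamma\bigr)\bigl(T+(a-b)\gamma\bigr),
\end{equation*}
where $S=Z_1+Z_2$ and $T=Z_1-Z_2$ are independent $\N(0,2)$ variables. Rescaling to standard normals and setting $u=(a+b)\gamma/\sqrt{2}\ge 0$, $v=(a-b)\gamma/\sqrt{2}\ge 0$, and $c'=x/(2\rho)>0$, the problem reduces to showing that $R(u,v):=\Pr\{(Z_1'+u)(Z_2'+v)\le c'\}$ is maximized at $(u,v)=(0,0)$, noting $R(0,0)=F_{\rho,0}(x)$.

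I would establish the stronger statement that $R$ is non-increasing in each argument on $[0,\infty)^2$; since along the ray $\gamma\mapsto(u,v)$ both coordinates increase, the maximum over $\gamma\ge 0$ is then at the origin. To prove monotonicity in $u$, condition on $Z_2'+v=q$: for $q>0$ the conditional probability is $\Phi(c'/q-u)$ and for $q<0$ it is $\Phi(u-c'/q)$, which after folding the negative half-line ($q=-r$) onto the positive one gives
\begin{equation*}
\partial_u R=\int_0^\infty\Bigl[\phi(r+v)\,\phi(c'/r+u)-\phi(r-v)\,\phi(c'/r-u)\Bigr]\dd r .
\end{equation*}
For $r,v\ge 0$ one has $|r+v|\ge|r-v|$, hence $\phi(r+v)\le\phi(r-v)$; likewise $c'/r>0$ and $u\ge 0$ give $\phi(c'/r+u)\le\phi(c'/r-u)$. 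The integrand is therefore pointwise $\le 0$, so $\partial_u R\le 0$, and the same computation with the roles of the two coordinates exchanged yields $\partial_v R\le 0$. Consequently $F_{\rho,\gamma}(x)=R(u,v)\le R(0,0)=F_{\rho,0}(x)$ for every $\gamma$ and every $x>0$, and the supremum defining $\bar{F}^{\ast}_{\rho}$ is attained at $\gamma=0$.

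The main obstacle is that this monotonicity cannot be obtained at the level of densities: as $|\gamma|$ grows the law shifts to the right, so $f_{\rho,\gamma}(w)$ in fact \emph{exceeds} $f_{\rho,0}(w)$ in the far right tail and the pointwise domination $f_{\rho,\gamma}\le f_{\rho,0}$ fails. The envelope property for the positive part thus relies on cancellation, which is exactly what the product factorization and the conditional representation above expose: the elementary Gaussian inequality $\phi(\,\cdot-m)\ge\phi(\,\cdot+m)$ for nonnegative argument and shift supplies the sign once one conditions on a single coordinate and plays the two quadrants of $\{AB>0\}$ against each other. The delicate bookkeeping is in tracking the sign of the conditioning variable $q$ (which swaps the one-sided probability from $\Phi(c'/q-u)$ to $\Phi(u-c'/q)$) and in justifying differentiation under the integral sign, both of which are routine given the rapid Gaussian decay.
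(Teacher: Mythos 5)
Your proof is correct, and it takes a genuinely different route from the paper's. The paper argues geometrically: it reads $1-F_{\rho,\gamma}(v^2)$ as the standard Gaussian measure of the hyperbolic region $\{x^2-y^2\ge v^2/\rho\}$ with the center moving along a ray as $\gamma$ grows, rotates to coordinates $(U,V)$ aligned with that ray, conditions on $|V|=v$, and then runs a three-case geometric argument (the last case resting on an informal infinitesimal cancellation between the segments $[A,B]$, $[E,F]$ and $[C,D]$). You instead rotate by $45^\circ$, so that the hyperbola's asymptotes become the coordinate axes and the difference of squares factors as a product $(Z_1'+u)(Z_2'+v)$ of independent shifted Gaussians with $u,v\ge 0$ increasing in $\gamma$; conditioning on one factor and folding the negative half-line onto the positive one reduces everything to the pointwise inequality $\phi(t+m)\le\phi(t-m)$ for $t,m\ge0$, giving $\partial_u R\le 0$ and $\partial_v R\le 0$ with no case analysis. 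Your version is more self-contained and easier to make fully rigorous (the only housekeeping is differentiation under the integral, dominated by $\tfrac{1}{2\pi}\{\phi(q-v)+\phi(q+v)\}$), while the paper's version supplies the geometric picture that matches its limit-experiment figures; your observation that density-level domination fails and that the CDF-level cancellation is essential is exactly the right diagnosis. One small quibble: your deferral of $\rho=1$ to a ``limiting argument'' is both vague and unnecessary --- the same factorization applies verbatim to the $\rho=1$ family $(Z_1+\gamma)^2-Z_2^2=(Z_1+Z_2+\gamma)(Z_1-Z_2+\gamma)$ with $u=v=\gamma/\sqrt{2}$, so no limit in $\rho$ is needed (and the paper in any event handles $\rho=1$ separately in its Bessel-envelope proposition).
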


\begin{proof}
Fix $\rho \in (0,1]$ and $v^2 \geq 0$, with $\gamma' = \gamma \sqrt{1-\rho^2}$ it follows from \cref{prop:asymp-weak-strong} that 
\begin{equation}
1 - F_{\rho, \gamma'}(v^2) = \Pr \left ( (Z_1 + \mu_1)^2 - (Z_2 + \mu_2)^2 \geq v^2 / \rho \right ),\label{eqs:gaussian-hyperbola}
\end{equation}
where $\mu_1 = \gamma \sqrt{(1+\rho)/2}$, $\mu_2 = \gamma \sqrt{(1-\rho)/2}$. Since $F_{\rho, \gamma'}$ is symmetric in $\gamma$, we show $\gamma = 0$ maximizes $F_{\rho, \gamma'}(v^2)$ by showing that the probability on the RHS of \cref{eqs:gaussian-hyperbola} increases in $\gamma \in (0, \infty)$. The probability can be interpreted as the standard Gaussian measure of the hyperbolic set $\{(x,y): x^2  - y^2 \geq v^2 / \rho\}$ with the Gaussian centered at $G=(\mu_1, \mu_2) = \gamma(\sqrt{(1+\rho)/2},  \sqrt{(1-\rho)/2})$. This is visualized in \cref{fig:geo-hyperbola}, where $\gamma = \overline{OG}$, $\tan \phi = \sqrt{(1-\rho)/(1+\rho)}$ and the hyperbolic set consists of the area inside the two branches. As $\gamma$ increases from zero, the center moves away from the origin along the $V$ line. Let $U$ be the line perpendicular to $V$. The Gaussian measure has two independent standard normal projections $(U, V)$, which is a rotation of $(Z_1, Z_2)$. Now we show that for every $v>0$, by conditioning on $|V| = v$, the conditional probability of $U$ in the appropriate ``section'' of the hyperbolic set, denoted by probability $q(v)$, increases with $\gamma$. 

Let $[A,B]$ and $[C,D]$ be the line segments that $V =-v$ and $V=v$ intersect the hyperbola respectively. By independence of $U$ and $V$, we have $q(v) = \Pr(U \in [A,B]) + \Pr(U \in [C,D])$. Let $\hat{v}$ and $\bar{v}$ be the distance from $G$ to the tangent to the left and right branch of the hyperbola respectively, parallel to line $U$; see \cref{fig:geo-hyperbola}. There are three cases. (i) When $v \leq \bar{v}$ (the first panel of \cref{fig:geo-hyperbola}), as $\gamma$ increases, both $[A,B]$ and $[C,D]$ become bigger, and thus $q(v)$ increases. (ii) When $\bar{v} < v \leq \hat{v}$, $[A,B]$ is empty but $[C,D]$ becomes bigger, so $q(v)$ increases. (iii) When $v > \hat{v}$, as $\gamma$ increases (the second panel of \cref{fig:geo-hyperbola}), $[C,D]$ increases but $[A,B]$ decreases. Let $[E,F]$ be the segment symmetric to $[A,B]$ about the origin. We observe that, as $\gamma$ increases by an infinitesimal $\Delta \gamma$, the amount that $\Pr(U \in [A,B])$ decreases equals the amount that $\Pr(U \in [E,F])$ increases, which is smaller than the amount that $\Pr(U \in [C,D])$ increases. Hence, $q(v)$ still increases. 

By the monotonicity for every value of $|V|$, we conclude that the total probability on the RHS of \cref{eqs:gaussian-hyperbola} increases in $\gamma$. Hence, $F_{\gamma, \rho}(v^2)$ is maximized at $\gamma = 0$ for every $v$, namely $\bar{F}_{\rho} = F_{\rho, \gamma=0}$. It follows that for $X \sim \bar{F}_{\gamma}$, $(X)_{+} =_{d} \rho(Z_1^2 - Z_2^2)_{+}$ for two independent standard normal variables $Z_1, Z_2$. 
\end{proof}

\begin{corollary}
$\bar{F}_{\rho}(0) \equiv 1/2$.
\end{corollary}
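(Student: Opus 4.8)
The plan is to read off $\bar{F}_{\rho}(0) = 1/2$ directly from \cref{prop:positive-part-envelope} together with the symmetry of the $\gamma = 0$ law. The essential observation is that the law $F_{\rho,0}$, namely that of $\rho(Z_1^2 - Z_2^2)$ for $Z_1, Z_2 \iid \N(0,1)$, is symmetric about the origin: swapping $Z_1$ and $Z_2$ negates $Z_1^2 - Z_2^2$, and $\Pr(Z_1^2 = Z_2^2) = 0$, so that $\Pr(\rho(Z_1^2 - Z_2^2) > 0) = \Pr(\rho(Z_1^2 - Z_2^2) < 0) = 1/2$, with no atom at $0$.

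First I would invoke \cref{prop:positive-part-envelope}, which establishes that the positive part of $\bar{F}_{\rho}$ is distributed as the positive part of $\rho(Z_1^2 - Z_2^2)$, and recall that the preceding proposition guarantees $\bar{F}_{\rho}$ is a genuine distribution function. Consequently the mass $\bar{F}_{\rho}$ assigns to $(0,\infty)$ equals $\Pr(\rho(Z_1^2 - Z_2^2) > 0) = 1/2$. Since $\bar{F}_{\rho}$ has total mass one, the mass on $(-\infty, 0]$ is $1 - 1/2 = 1/2$, that is, $\bar{F}_{\rho}(0) = 1/2$, independently of whether $\bar{F}_{\rho}$ carries an atom at $0$.

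Equivalently, one can argue at the level of the pointwise supremum: the proof of \cref{prop:positive-part-envelope} shows that $\gamma = 0$ maximizes $F_{\rho,\gamma}(x)$ for every $x \geq 0$, so in particular $\bar{F}^{\ast}_{\rho}(0) = \sup_{\gamma} F_{\rho,\gamma}(0) = F_{\rho,0}(0) = 1/2$, and continuity of $F_{\rho,0}$ at $0$ then yields $\bar{F}_{\rho}(0) = \bar{F}^{\ast}_{\rho}(0) = 1/2$ via \cref{def:envelope}. There is no genuine obstacle here, since all the analytic work was carried out in \cref{prop:positive-part-envelope}; the only point deserving a moment's care is the distinction between $\bar{F}^{\ast}_{\rho}$ and $\bar{F}_{\rho}$ at the single point $0$, which is settled by the continuity of the $\gamma = 0$ law.
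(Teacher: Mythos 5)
Your proof is correct and matches the paper's intent: the corollary is stated immediately after \cref{prop:positive-part-envelope} precisely because it follows from the fact that the positive part of $\bar{F}_{\rho}$ is that of the symmetric, atomless-at-zero law $\rho(Z_1^2 - Z_2^2)$, so the mass on $(0,\infty)$ is exactly $1/2$. Your first argument (mass accounting on $(0,\infty)$ versus $(-\infty,0]$) is the cleaner of the two you give, since it sidesteps any question about continuity of $\bar{F}^{\ast}_{\rho}$ at $0$ entirely.
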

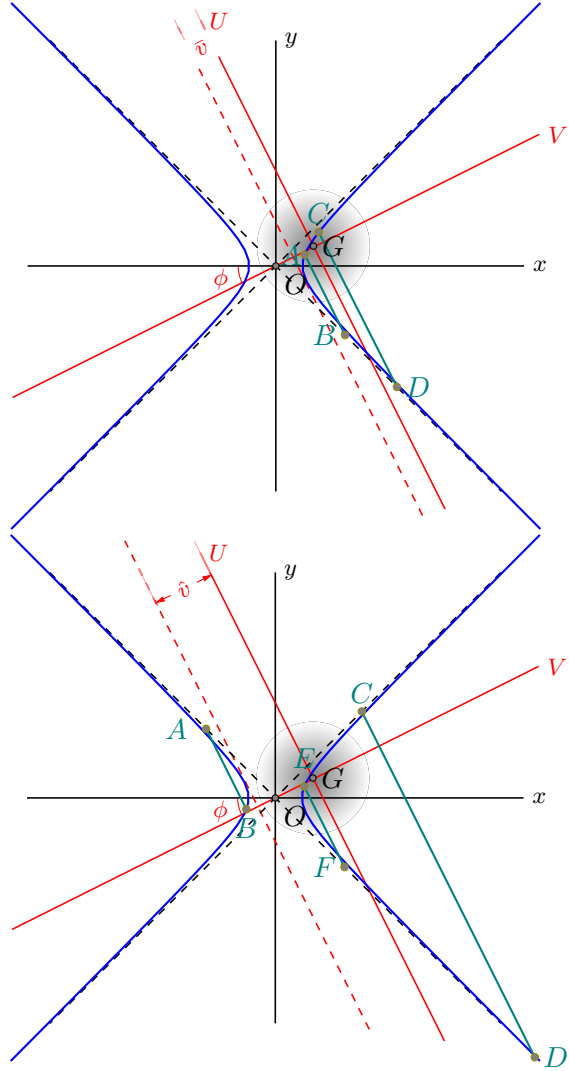
\begin{figure}[!ht]
\centering
\begin{tikzpicture}[scale=.5]
  \tkzDefPoint(0,0){O}
  \tkzDefPoint(0.77,0.3){A}
  \tkzDefPoint(0.7,-0.24){J}
  \tkzDefPoint(-2.2,0){J1}
  \tkzDefPoint(2.2,0){J2}
  \tkzDefPoint(0,2){E}
  \tkzDefPoint(0,-2){F}
  \tkzDefPoint(2,2){C1}
  \tkzDefPoint(-2,-2){C2}
  \tkzDefPoint(1, .5){Z1}
  \tkzDefPoint(-1, -.5){Z2}
  \tkzDefPoint(1, 0.53){G}
  \tkzDefPoint(-2,2){D1}
  \tkzDefPoint(2,-2){D2}
  \tkzDefPointsBy[symmetry=center G](A){M}
  \tkzDefLine[orthogonal=through G](Z1,Z2) \tkzGetPoint{H}
  \tkzDefLine[parallel=through M](G,H) \tkzGetPoint{M'}
  \tkzDefPoint(1.14,0.9){C}
  \tkzInterLL(M,M')(D1,D2) \tkzGetPoint{D}
  \tkzDefLine[parallel=through A](G,H) \tkzGetPoint{A'}
  \tkzInterLL(A,A')(D1,D2) \tkzGetPoint{B}
  \tkzDefLine[parallel=through J](G,H) \tkzGetPoint{J'}
  \tkzDefPointWith[linear,K=3.7](H,G) \tkzGetPoint{S}
  \tkzDefPointsBy[projection=onto J--J'](S){S'}
  \fill[inner color=gray, outer color=white] (G) circle (1.5);
  \tkzDrawLine[add= 1 and 1, color=black, end=$x$](J1,J2)
  \tkzDrawLine[add= 1 and 1, color=black, end=$y$](F,E)
  \tkzDrawLine[add= 3 and 3, color=red, end=$V$](Z2,Z1)
  \tkzDrawLine[add= 2.5 and 3, color=red, end=$U$](H,G)
  \tkzDrawLine[add= 2.2 and 3.3, color=red, end=$$, style=dashed](J',J)
  \dimline[color=red]{(S)}{(S')}{$\bar{v}$}
  \tkzDrawLines[add= 1 and 1, color=black, style=dashed](C1,C2 D1,D2)
  \draw[domain=-7:7, samples=50, color=blue, thick] plot ({sqrt(\x*\x + 0.5)}, {\x});
  \draw[domain=-7:7, samples=50, color=blue, thick] plot ({-sqrt(\x*\x + 0.5)}, {\x});
  \tkzDrawSegment[color=teal, thick](C,D)
  \tkzDrawSegment[color=teal, thick](A,B)
  \tkzDrawPoints[color=olive, size=7](C,D) \tkzLabelPoints[right, color=teal](D) \tkzLabelPoints[above, color=teal](C)
  \tkzDrawPoints[color=olive, size=7](A,B) \tkzLabelPoints[left, color=teal](B) \tkzLabelPoints[left=-0.5ex, color=teal](A)
  \tkzDrawPoints(O,G) \tkzLabelPoints(O) \tkzLabelPoints[right](G)
  \tkzMarkAngle[color=red](J1,O,Z2)
  \tkzLabelAngle[pos=-1.5, color=red](Z2,O,J1){$\phi$}
\end{tikzpicture}
\begin{tikzpicture}[scale=.5]
  \tkzDefPoint(0,0){O}
  \tkzDefPoint(1.8,1.7){C}
  \tkzDefPoint(0.77,0.3){E}
  \tkzDefPoint(-0.7,0.24){J}
  \tkzDefPoint(-2.2,0){J1}
  \tkzDefPoint(2.2,0){J2}
  \tkzDefPoint(0,2){P}
  \tkzDefPoint(0,-2){Q}
  \tkzDefPoint(2,2){C1}
  \tkzDefPoint(-2,-2){C2}
  \tkzDefPoint(1, .5){Z1}
  \tkzDefPoint(-1, -.5){Z2}
  \tkzDefPoint(1, 0.53){G}
  \tkzDefPoint(-2,2){D1}
  \tkzDefPoint(2,-2){D2}
  \tkzDefLine[orthogonal=through G](Z1,Z2) \tkzGetPoint{H}
  \tkzDefLine[parallel=through C](G,H) \tkzGetPoint{C'}
  \tkzInterLL(C,C')(D1,D2) \tkzGetPoint{D}
  \tkzDefLine[parallel=through E](G,H) \tkzGetPoint{E'}
  \tkzInterLL(E,E')(D1,D2) \tkzGetPoint{F}
  \tkzDefLine[parallel=through J](G,H) \tkzGetPoint{J'}
  \tkzDefPointWith[linear,K=3.7](H,G) \tkzGetPoint{S}
  \tkzDefPointsBy[projection=onto J--J'](S){S'}
  \tkzDefPointsBy[symmetry=center O](E,F){B,A}
  \tkzDefPointsBy[symmetry=center G](B){M}
  \tkzDefLine[parallel=through M](G,H) \tkzGetPoint{M'}
  \tkzInterLL(M,M')(D1,D2) \tkzGetPoint{D}
  \tkzInterLL(M,M')(C1,C2) \tkzGetPoint{C}
  \fill[inner color=gray, outer color=white] (G) circle (1.5);
  \tkzDrawLine[add= 1 and 1, color=black, end=$x$](J1,J2)
  \tkzDrawLine[add= 1 and 1, color=black, end=$y$](Q,P)
  \tkzDrawLine[add= 3 and 3, color=red, end=$V$](Z2,Z1)
  \tkzDrawLine[add= 2.5 and 3, color=red, end=$U$](H,G)
  \tkzDrawLine[add= 2.2 and 3.3, color=red, end=$$, style=dashed](J',J)
  \dimline[color=red]{(S)}{(S')}{$\hat{v}$}
  \tkzDrawLines[add= 1 and 1, color=black, style=dashed](C1,C2 D1,D2)
  \draw[domain=-7:7, samples=50, color=blue, thick] plot ({sqrt(\x*\x + 0.5)}, {\x});
  \draw[domain=-7:7, samples=50, color=blue, thick] plot ({-sqrt(\x*\x + 0.5)}, {\x});
  \tkzDrawSegment[color=teal, thick](C,D)
  \tkzDrawPoints[color=olive, size=7](C,D) \tkzLabelPoints[above, color=teal](C) \tkzLabelPoints[right, color=teal](D)
  \tkzDrawSegment[color=teal, thick](E,F)
  \tkzDrawPoints[color=olive, size=7](E,F) \tkzLabelPoints[left, color=teal](F) \tkzLabelPoints[above=1ex, color=teal](E)
  \tkzDrawSegment[color=teal, thick](A,B)
  \tkzDrawPoints[color=olive, size=7](A,B) \tkzLabelPoints[below, color=teal](B) \tkzLabelPoints[left=1ex, color=teal](A)
  \tkzDrawPoints(O,G) \tkzLabelPoints(O) \tkzLabelPoints[right](G)
  \tkzMarkAngle[color=red](J1,O,Z2)
  \tkzLabelAngle[pos=-1.5, color=red](Z2,O,J1){$\phi$}
\end{tikzpicture}

\caption{The CDF $F_{\rho, \gamma}(\cdot)$ at can be interpreted as the probability of a hyperbolic set (inside the two branches of blue curves) as measured by a standard normal centered $|\gamma|$ away from the origin, lying on the line $V$ with slope $\tan \phi = \sqrt{(1-\rho)/(1+\rho)}$. The asymptotes of the hyperbola are $y=\pm x$.}
\label{fig:geo-hyperbola}
\end{figure}

Unfortunately, we do not have an analytic form of the distribution for the negative part of $\bar{F}_{\rho}$, which is the part relevant for decision making, except for $\rho \rightarrow 0$ and $\rho = 1$. 

\begin{proposition}[Bessel envelope] \label{prop:Bessel-K}
$\bar{F}_{\rho=1} =_{d} Z_1^2 - Z_2^2$ for $Z_1, Z_2 \iid \N(0,1)$.
\end{proposition}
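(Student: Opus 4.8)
The plan is to evaluate the family of asymptotic laws \emph{directly} at $\rho = 1$ and to show that, unlike the generic case $|\rho| < 1$, the local-parameter sweep admits a maximizer of the CDF that is uniform in the argument $x$, so that the envelope collapses onto the single member indexed by $\gamma = 0$.

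First I would record the form of the family at $\rho = 1$. Using the reparametrization $\gamma' = \sqrt{1-\rho^2}\,\gamma$ introduced above, $F_{\rho, \gamma'}$ is the law of $\rho[(Z_1 + \gamma\sqrt{(1+\rho)/2})^2 - (Z_2 + \gamma\sqrt{(1-\rho)/2})^2]$, a form that stays non-degenerate as $\rho \uparrow 1$ (in contrast with the shift $\gamma/\sqrt{2(1-\rho)}$ appearing in \cref{eqs:asymp-dist-01}). Setting $\rho = 1$, the second shift $\gamma\sqrt{(1-\rho)/2}$ vanishes and the family becomes $\{(Z_1+\gamma)^2 - Z_2^2 : \gamma\in\mathbb{R}\}$. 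The crucial structural feature is that the subtracted term $B := Z_2^2 \sim \chi_1^2$ is now \emph{central} and frozen in $\gamma$; only the additive term $A_\gamma := (Z_1+\gamma)^2 \sim \chi_1^2(\gamma^2)$ varies.

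Next I would invoke the stochastic monotonicity of the noncentral chi-square in its noncentrality. For fixed $t \geq 0$ one has $\Pr(A_\gamma \leq t) = \Phi(\sqrt{t} - \gamma) - \Phi(-\sqrt{t} - \gamma)$, whose derivative in $\gamma$ equals $\phi(\sqrt{t} + \gamma) - \phi(\sqrt{t} - \gamma) \leq 0$ for $\gamma \geq 0$, since $\sqrt{t}+\gamma \geq |\sqrt{t}-\gamma|$ and $\phi$ is symmetric and decreasing on $[0,\infty)$. Hence $\Pr(A_\gamma \leq t) \leq \Pr(A_0 \leq t)$ for every $\gamma$ and every $t \geq 0$. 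Conditioning on $B$ and using independence of $A_\gamma$ and $B$,
\[
F_{1,\gamma}(x) = \E_B\!\left[\Pr(A_\gamma \leq x + B)\right] \leq \E_B\!\left[\Pr(A_0 \leq x + B)\right] = F_{1,0}(x)
\]
for every $x \in \mathbb{R}$. Taking the supremum gives $\bar{F}^{\ast}_{\rho=1}(x) = \sup_{\gamma} F_{1,\gamma}(x) = F_{1,0}(x)$, the CDF of $Z_1^2 - Z_2^2$; this is continuous, so by \cref{def:envelope} $\bar{F}_{\rho=1} = \bar{F}^{\ast}_{\rho=1}$ is exactly the claimed law. This is consistent with \cref{prop:positive-part-envelope}, which already forces the positive part to be that of $Z_1^2 - Z_2^2$ at $\rho=1$; the present argument additionally pins down the negative part. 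I would close by noting that, after the orthogonal change of variables $(U,V) = ((Z_1+Z_2)/\sqrt{2}, (Z_1-Z_2)/\sqrt{2})$, one has $Z_1^2 - Z_2^2 = 2UV$ with density $\tfrac{1}{2\pi}K_0(|x|/2)$, which explains the name ``Bessel envelope'' and ties back to the $K_0$ term appearing in the density bound derived earlier.

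The main obstacle — indeed the only nontrivial point — is justifying the reduction to a single maximizer that works simultaneously for all $x$. For generic $|\rho| < 1$ both chi-square components are noncentral and their noncentralities grow with $\gamma$, pulling the CDF in opposite directions, so the maximizing $\gamma$ depends on $x$ (this is precisely the difficulty for the negative tail flagged in the text). The content of the proposition is that at the boundary $\rho = 1$ one component degenerates to a central chi-square, decoupling this competition and making $\gamma = 0$ optimal uniformly in $x$. Care is needed to confirm that reading the family at $\rho = 1$ through the non-degenerate reparametrization is legitimate, rather than silently passing to a limit $\rho \uparrow 1$ of envelopes.
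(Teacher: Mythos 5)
Your proof is correct and follows essentially the same route as the paper: both condition on $Z_2$ (equivalently on $B=Z_2^2$), reduce to showing that $\gamma=0$ maximizes $\Pr\left((Z_1+\gamma)^2\leq t\right)=\Phi(\sqrt{t}-\gamma)-\Phi(-\sqrt{t}-\gamma)$ for every fixed $t\geq 0$, and conclude by taking the expectation over the (central, $\gamma$-free) subtracted term. Your sign-of-derivative argument for $\gamma\geq 0$ is a slightly more complete version of the paper's critical-point argument, but it is the same idea.
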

\begin{proof}
Under $\rho = 1$, the CDF is
\begin{equation*}
F_{\gamma}(x) = \Pr\left ( (Z_1 + \gamma)^2 -Z_2^2 \leq x \right) = \E_{Z_2} \Pr\left ( (Z_1 + \gamma)^2 \leq x + Z_2^2 \mid Z_2 \right).
\end{equation*}
Since the conditional probability is non-negative, it suffices to show that given any $x \in \mathbb{R}$, $\gamma = 0$ maximizes $\Pr\left((Z_1 + \gamma)^2 \leq x + z_2^2 \mid Z_2 = z_2 \right) = \Pr\left((Z_1 + \gamma)^2 \leq x + z_2^2 \right)$ for all $z_2 \in \mathbb{R}$. When $x + z_2^2 \leq 0$, the conditional probability is zero and $\gamma = 0$ is trivially a maximizer. When $x + z_2^2 > 0$, then $\Pr\left((Z_1 + \gamma)^2 \leq x + z_2^2 \right) = \Phi(\sqrt{x+z_2^2} - \gamma) - \Phi(-\sqrt{x + z_2^2} - \gamma)$. Setting the derivative with respect to $\gamma$ to zero requires $\phi(-\sqrt{x + z_2^2} - \gamma) = \phi(\sqrt{x + z_2^2} - \gamma)$, to which $\gamma = 0$ is the unique solution. Therefore, $\gamma = 0$ is the unique maximizer of $F_{\gamma}(x)$ for all $x$. 
\end{proof}

The distribution in \cref{prop:Bessel-K} is a difference between two independent $\chi_1^2$ variables. The density, as plotted in \cref{fig:bessel}, is
\begin{equation*}
p_{B}(u) = \frac{1}{2 \pi} K_0(|u| / 2), \label{eqs:pdf-Bessel}
\end{equation*}
where $K_{0}$ is a modified Bessel function of the second kind. It is referred to as a $K$-form Bessel distribution in the literature; see \citet[Chapter 4.4]{johnson1995continuous}, \citet{bhattacharyya1942use} and \citet[Page 25]{simon2007probability}.

\begin{figure}[!ht]
\includegraphics[width=0.6\textwidth]{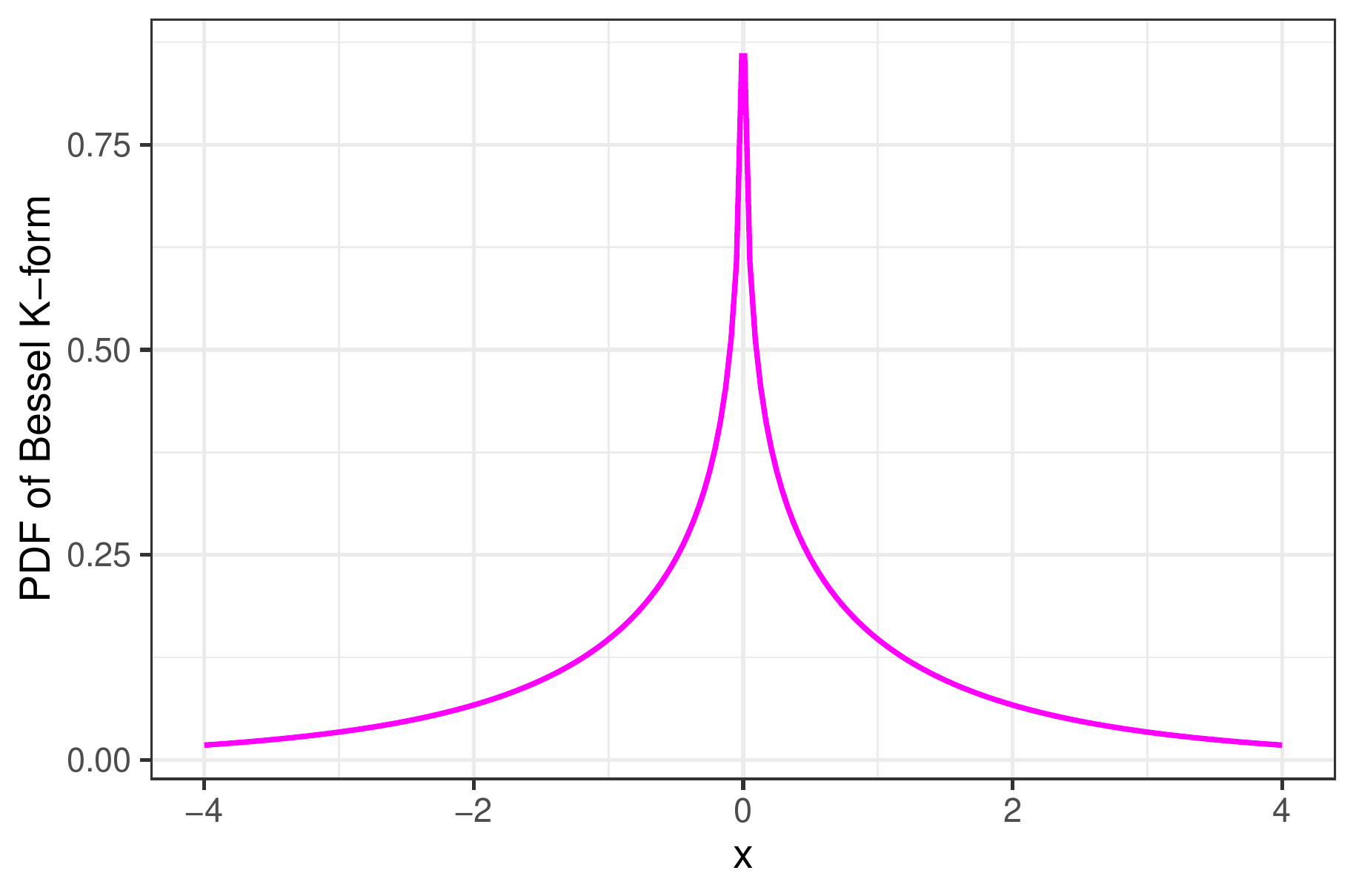}
\caption{The density for $\bar{F}_{\rho=1} =_{d} Z_1^2 - Z_2^2$.}
\label{fig:bessel}
\end{figure}

\begin{proposition}[Continuity of envelope] \label{prop:envelope-continuity}
 $\bar{F}_{\rho} \distconvto \bar{G}$ as $\rho \rightarrow 0$, where $\bar{G}$ is the envelope distribution for the weak-weak regime given in \cref{prop:envelope-ww}.
\end{proposition}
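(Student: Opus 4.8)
The plan is to prove weak convergence by verifying pointwise convergence of the distribution functions at every continuity point of $\bar{G}$. Since $\bar{G}$ is continuous except for its jump at $0$, by a standard squeeze using the monotonicity of $\bar{F}^{\ast}_\rho$ and \cref{def:envelope} (so that $\bar{F}_\rho$ agrees with $\bar{F}^{\ast}_\rho$ at continuity points), it suffices to show $\bar{F}^{\ast}_\rho(x)\to\bar{G}(x)$ for each $x\neq0$. The positive axis is immediate from \cref{prop:positive-part-envelope}: since $\bar{F}_\rho(0)=1/2$ and the positive part of $\bar{F}_\rho$ equals that of $\rho(Z_1^2-Z_2^2)$, we have $\bar{F}_\rho(x)=\tfrac12+\Pr(0<\rho(Z_1^2-Z_2^2)\le x)$ for $x>0$, and as $\rho\downarrow0$ the events $\{0<\rho(Z_1^2-Z_2^2)\le x\}=\{0<Z_1^2-Z_2^2\le x/\rho\}$ increase to $\{Z_1^2>Z_2^2\}$, whose probability is $\tfrac12$; hence $\bar{F}_\rho(x)\to1=\bar{G}(x)$.

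The real work is the negative axis. Fix $x=-v^2$ with $v>0$; the target is $\bar{F}^{\ast}_\rho(-v^2)\to\Phi(-v)=\bar{G}(-v^2)$. Writing $W_i=Z_i+\nu_i$ with $\nu_1=\gamma/\sqrt{2(1-\rho)}$ and $\nu_2=\gamma/\sqrt{2(1+\rho)}$ from \cref{eqs:asymp-dist-01}, I would pass to the rotated coordinates $S=W_1+W_2$ and $D=W_2-W_1$, which are independent, each of variance $2$, with means $m_S=\nu_1+\nu_2\ge0$ and $m_D=\nu_2-\nu_1\le0$ (taking $\gamma\ge0$ by symmetry). Then $F_{\rho,\gamma}(-v^2)=\Pr(W_2^2-W_1^2\ge v^2/\rho)=\Pr(SD\ge K)$ with $K=v^2/\rho$. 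The crucial simplification is that the ratio $r_\rho:=|m_D|/m_S=(\nu_1-\nu_2)/(\nu_1+\nu_2)$ does \emph{not} depend on $\gamma$, and a short expansion gives $r_\rho=\rho/2+O(\rho^3)$, so that $2Kr_\rho\to v^2$. Thus $\bar{F}^{\ast}_\rho(-v^2)=\sup_{m\ge0}\Pr(SD\ge K)$ over independent $S\sim\N(m,2)$, $D\sim\N(-r_\rho m,2)$, which is what makes the AM-GM calculation below land exactly on $\Phi(-v)$.

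For the lower bound I would evaluate at $m^{\ast}=\sqrt{K/r_\rho}$ (equivalently $\gamma\approx v/\rho$), where $r_\rho m^{\ast}=\sqrt{Kr_\rho}\to v/\sqrt2$ and $m^{\ast}\to\infty$. Since $K/S\to K/m^{\ast}=\sqrt{Kr_\rho}\to v/\sqrt2$ in probability by concentration of $S$, and $D$ is independent with $D\Rightarrow\N(-v/\sqrt2,2)$, Slutsky gives $\Pr(SD\ge K)=\Pr(D\ge K/S)\to\Pr(\N(-v/\sqrt2,2)\ge v/\sqrt2)=\Phi(-v)$, so $\liminf_\rho\bar{F}^{\ast}_\rho(-v^2)\ge\Phi(-v)$.

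The hard part is the matching uniform upper bound $\limsup_\rho\sup_{m\ge0}\Pr(SD\ge K)\le\Phi(-v)$; note that the density bound from the validity proposition is too lossy here (its constant blows up like $\rho^{-1}$), which is exactly why the $SD$-decomposition is needed. I would split the range of $m$ at the crossover scale. For large $m$ (say $m\ge\rho^{-1/2}$), restrict to $S\in[(1-\epsilon)m,(1+\epsilon)m]$ at cost $2\Phi(-\epsilon m/\sqrt2)\le2\Phi(-\epsilon\rho^{-1/2}/\sqrt2)\to0$ uniformly; on that event $SD\ge K$ forces $D\ge K/((1+\epsilon)m)$, and AM-GM applied to $\tfrac{K}{(1+\epsilon)m}+r_\rho m\ge2\sqrt{Kr_\rho/(1+\epsilon)}$ bounds the term by $\Phi(-\sqrt{2Kr_\rho/(1+\epsilon)})\to\Phi(-v/\sqrt{1+\epsilon})$, uniformly in $m$. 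For small $m$ ($m<\rho^{-1/2}$), both $m_S$ and $|m_D|$ are too small for $SD$ to reach $K$: bounding $\Pr(|S|\,|D|\ge K)\le\Pr(|S|\ge\rho^{-3/4})+\Pr(|D|\ge v^2\rho^{-1/4})$ makes both terms vanish uniformly. Combining the two ranges and letting first $\rho\to0$ then $\epsilon\to0$ yields the upper bound. The only delicate point, which the split at $\rho^{-1/2}$ resolves, is keeping every error term uniform in the local parameter across the crossover $m\asymp\rho^{-1}$; with that in hand the lower and upper bounds coincide at $\Phi(-v)$, completing the proof.
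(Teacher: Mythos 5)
Your proof is correct, but it takes a genuinely different route from the paper's. After the (shared) reduction of the positive axis to \cref{prop:positive-part-envelope}, the paper handles the negative axis by the substitution $\delta=\gamma\rho$, which rewrites the law as $X_\rho(\delta)=\delta^2+2\delta\bigl(\sqrt{\tfrac{1+\rho}{2}}Z_1-\sqrt{\tfrac{1-\rho}{2}}Z_2\bigr)+\rho(Z_1^2-Z_2^2)$ and identifies the target as $\sup_\delta\Pr(X_0(\delta)\le x)$; the interchange of $\sup_\delta$ and the limit $\rho\to0$ is then argued via a truncation and weak convergence of the bounded process $\{Y_\rho(\delta)\}$ in $\ell^\infty(\mathbb{R})$. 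You instead factor $W_2^2-W_1^2=SD$ into a product of independent normals and prove matching bounds directly: a Slutsky lower bound at the near-optimal location $m^\ast=\sqrt{K/r_\rho}$ (equivalently $\delta=\gamma\rho\approx\sqrt{-x}$, which reassuringly recovers the maximizer $\delta^\ast$ of \cref{prop:envelope-ww}), and a uniform upper bound via AM--GM and Gaussian tails with a split at $m\asymp\rho^{-1/2}$. The key structural observations --- that $r_\rho=|m_D|/m_S$ is free of $\gamma$ and that $2Kr_\rho\to v^2$ --- check out ($r_\rho=(1-\sqrt{1-\rho^2})/\rho=\rho/2+O(\rho^3)$), as do the tail estimates in both ranges of $m$. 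What each approach buys: the paper's argument is shorter and ties the limit directly to the weak-weak envelope, but its final step (passing a supremum over $\delta$ through indicator functionals using only weak convergence in $\ell^\infty$) is the delicate point and is stated rather than fully elaborated; your explicit two-sided computation is more elementary and self-contained, makes the uniformity in the local parameter completely transparent, and additionally exhibits the rate and the location of the near-maximizer. Your side remark that the density bound from the validity proposition cannot be reused here (its constant behaves like $\rho^{-1}$) is also accurate and correctly motivates the separate argument.
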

\begin{proof}
Firstly, we note that $\bar{F}_{\rho}(0) = \bar{G}(0) = 1/2$ and by \cref{prop:positive-part-envelope} the non-negative part of $\bar{F}_{\rho}$ also converges to that of $\bar{G}$ as $\rho \rightarrow 0$, namely a point mass at zero.
It remains to be shown that the negative part of $\bar{F}_{\rho}$ converges in law to the negative part of $\bar{G}$.
It suffices to show for any $x \leq 0$
\begin{equation*}
\sup_{\gamma} \Pr\left ( \rho \left[ \left(Z_1 + \gamma \sqrt{\frac{1 + \rho}{2}} \right )^2 - \left(Z_2 + \gamma \sqrt{\frac{1 - \rho}{2}} \right )^2 \right] \leq x \right) \rightarrow \Pr(-Z^2 \leq x) / 2
\end{equation*}
as $\rho \rightarrow 0$. Given $\rho >0$, the maximized probability can be rewritten as
\begin{equation*}
\begin{split}
&\quad \sup_{\gamma} \Pr\left ( \rho \left[ \left(Z_1 + \gamma \sqrt{\frac{1 + \rho}{2}} \right )^2 - \left(Z_2 + \gamma \sqrt{\frac{1 - \rho}{2}} \right )^2 \right] \leq x \right) \\
&= \sup_{\gamma} \Pr\left (  (\gamma \rho)^2  + 2 \gamma \rho \left( \sqrt{\frac{1 + \rho}{2}} Z_1 - \sqrt{\frac{1 - \rho}{2}} Z_2 \right ) \leq x - \rho(Z_1^2 - Z_2^2) \right ) \\
&= \sup_{\delta} \Pr\left (  \delta^2  + 2 \delta \left( \sqrt{\frac{1 + \rho}{2}} Z_1 - \sqrt{\frac{1 - \rho}{2}} Z_2 \right ) + \rho(Z_1^2 - Z_2^2) \leq x  \right ) \\
&= \sup_{\delta} \Pr\left (X_{\rho}(\delta) \leq x \right ),
\end{split}
\end{equation*}
where we define
\begin{equation*}
X_{\rho}(\delta) := \delta^2  + 2 \delta \left( \sqrt{\frac{1 + \rho}{2}} Z_1 - \sqrt{\frac{1 - \rho}{2}} Z_2 \right ) + \rho(Z_1^2 - Z_2^2)
\end{equation*}
for $\rho \in [0,1)$ and $\delta \in \mathbb{R}$. Note that $\sup_{\delta} \Pr(X_{0}(\delta) \leq x ) = \sup_{\delta} \Pr(\delta^2 + 2 \delta Z \leq x) = \Pr(-Z^2 \leq x) / 2$ for $Z \sim \N(0,1)$ by \cref{prop:envelope-ww}. We are left to show $\sup_{\delta} \Pr\left (X_{\rho}(\delta) \leq x \right ) \rightarrow \sup_{\delta} \Pr\left (X_{0}(\delta) \leq x \right )$ as $\rho \rightarrow 0$. Choose $x < M < \infty$ and define 
\begin{equation*}
Y_{\rho}(\delta) := \begin{cases} X_{\rho}(\delta), &\quad X_{\rho}(\delta) \leq M\\
M, &\quad X_{\rho}(\delta) > M
\end{cases}.
\end{equation*}
We observe that 
\begin{equation*}
\begin{split}
&\quad \left | \sup_{\delta} \Pr\left (X_{\rho}(\delta) \leq x \right ) - \sup_{\delta} \Pr\left (X_{0}(\delta) \leq x \right )  \right | \\
&= \left | \sup_{\delta} \Pr\left (Y_{\rho}(\delta) \leq x \right ) - \sup_{\delta} \Pr\left (Y_{0}(\delta) \leq x \right )  \right | \\
&\leq \sup_{\delta}  \left |  \Pr\left (Y_{\rho}(\delta) \leq x \right ) - \Pr\left (Y_{0}(\delta) \leq x \right )  \right | \\
& =  \sup_{\delta}  \left |  \E \left (\I_{Y_{\rho}(\delta) \leq x} - \I_{Y_{0}(\delta) \leq x} \right ) \right | \rightarrow 0,
\end{split}
\end{equation*}
where the last step follows from weak convergence $\{Y_{\rho}(\delta): \delta \in \mathbb{R}\} \weaklyconvto \{Y_{0}(\delta): \delta \in \mathbb{R}\}$ in $\ell^{\infty}(\mathbb{R})$ as $\rho \rightarrow 0$ for a bounded stochastic process; see \citet[Chap. 18]{van2000asymptotic}. 
\end{proof}

Perhaps surprisingly, \cref{prop:envelope-continuity} shows that the asymptotic envelope is \emph{continuous} between the two regimes, which bridges the discontinuity of the asymptotic distributions of $\lambda_n^{(0:1)}$ as presented in \cref{prop:asymp-weak-strong,prop:asymp-ww}. Therefore, taking the envelope resolves the non-uniformity issue in terms of \emph{both} the regime and the local parameter. Now with this we extend the definition of the envelope $\bar{F}_{\rho}$ to $\rho \in [0,1]$ by writing $\bar{F}_{\rho=0} = \bar{G}$. 

\begin{figure}[!ht]
\includegraphics[width=0.9\textwidth]{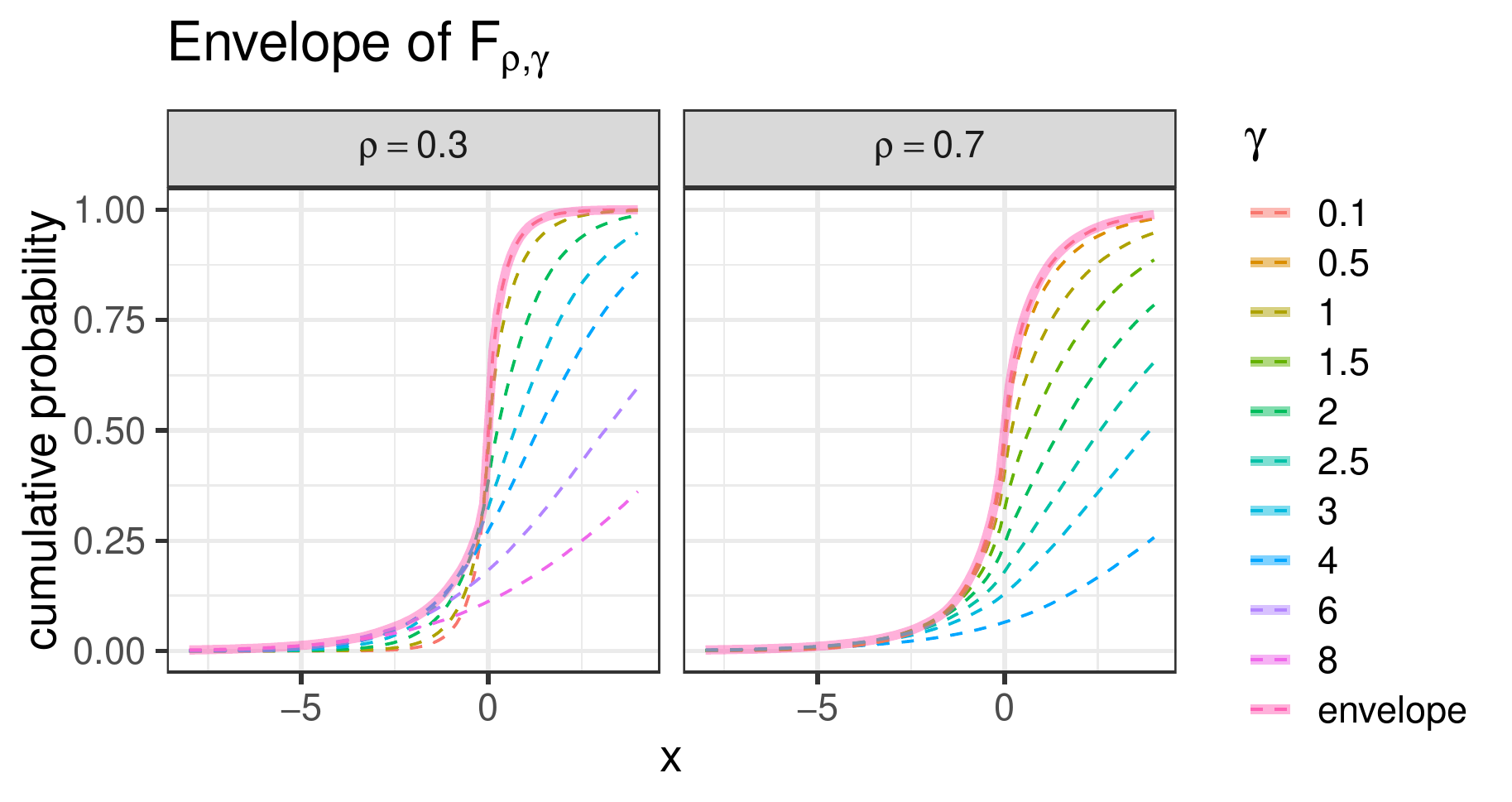}
\caption{The envelope distribution $\bar{F}_{\rho}$ under the strong-weak regime for $\rho = 0.3, 0.7$.}
\label{fig:envelope-strong-weak}
\end{figure}

\Cref{fig:envelope-strong-weak} showcases two envelopes. In the absence of an analytic form for $\rho \in (0, 1)$, the envelopes can be numerically simulated by taking the supremum over a grid of values for $\gamma$. We observe from simulations that there exists $\gamma^{\ast}(x, \rho) \in (0, \infty)$ such that $\pm \gamma^{\ast}$ uniquely maximizes $F_{\gamma, \rho}(x)$. 

Finally, we conclude this section by noting the following \emph{envelope of envelopes}. See \Cref{fig:envelope-of-envelopes} for an illustration. This result will be used in the next section to form simple decision rules based on the Bessel distribution. 

\begin{proposition}[Envelope of envelopes] \label{prop:envelope-of-envelopes}
The negative part of the envelope of $\{\bar{F}_{\rho}: \rho \in [0,1] \}$ is distributed as the negative part of $\bar{F}_{\rho = 1}$ (Bessel).
\end{proposition}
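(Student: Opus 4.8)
The plan is to show that, on the negative half-line, the pointwise supremum defining the envelope of envelopes is attained at $\rho=1$, so that it coincides with the Bessel CDF $\bar F_{\rho=1}$ of \cref{prop:Bessel-K}. Write $\tilde F(x):=\sup_{\rho\in[0,1]}\bar F_\rho(x)$. Since each $\bar F_\rho$ is one of the distributions over which the supremum is taken, $\tilde F(x)\ge\bar F_{\rho=1}(x)$ holds trivially for every $x$; recalling that every $\bar F_\rho(0)=1/2$ (and $\bar G(0)=1/2$), it therefore suffices to prove the reverse inequality on the negative part,
\begin{equation*}
\bar F_\rho(x)\le\bar F_{\rho=1}(x)\qquad\text{for all }\rho\in[0,1],\ x<0,
\end{equation*}
after which \cref{def:envelope} and \cref{lem:envelope-valid} identify the negative part of $\tilde F$ with that of $\bar F_{\rho=1}$. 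By \cref{prop:envelope-continuity} the endpoint $\rho=0$ is the weak-weak limit $\bar G$, so I may focus on $\rho\in(0,1)$; and by \cref{prop:Bessel-K} the right-hand side is the \emph{centered} law, $\bar F_{\rho=1}(-v^2)=\Pr(Z_1^2-Z_2^2\le-v^2)$.

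The key simplification is to \emph{diagonalize} the quadratic form into a product of two independent normals. For $x=-v^2<0$, set $X=(Z_1-Z_2)+(\mu_1-\mu_2)$ and $Y=(Z_1+Z_2)+(\mu_1+\mu_2)$ with $\mu_1=\gamma\sqrt{(1+\rho)/2}$, $\mu_2=\gamma\sqrt{(1-\rho)/2}$; then $X\sim\N(\mu_1-\mu_2,2)$ and $Y\sim\N(\mu_1+\mu_2,2)$ are \emph{independent}, and
\begin{equation*}
F_{\rho,\gamma}(-v^2)=\Pr\!\big(\rho\,XY\le-v^2\big)=\Pr\!\big(XY\le-v^2/\rho\big),\qquad\bar F_{\rho=1}(-v^2)=\Pr(X_0Y_0\le-v^2),
\end{equation*}
with $X_0,Y_0\iid\N(0,2)$. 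In the mean-plane the family $\{(\mu_1-\mu_2,\mu_1+\mu_2):\gamma\in\mathbb R\}$ is a ray of slope $\lambda(\rho)=(1+\sqrt{1-\rho^2})/\rho\ge1$, and the threshold $v^2/\rho$, re-expressed through $\rho=2\lambda/(\lambda^2+1)$, is proportional to $(\lambda^2+1)/\lambda=\lambda+1/\lambda$, which is minimized exactly at $\lambda=1$, i.e.\ at $\rho=1$. Thus $\sup_{\rho,\gamma}F_{\rho,\gamma}(-v^2)$ becomes a double supremum $\sup_{\lambda\ge1}\sup_{a\ge0}$ of a lower-tail probability of a product of two independent normals whose means lie on the ray and whose threshold grows with $\lambda$.

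With this reformulation the statement reduces to a single monotonicity: that the optimized tail $\bar F_\rho(-v^2)$ is nondecreasing in $\rho$ on $[0,1]$ (equivalently nonincreasing in $\lambda$). Granting it, the supremum over $\rho$ sits at $\rho=1$; there $\mu_2=0$ forces the means of $X$ and $Y$ to coincide, and a unimodality argument identical to the one in \cref{prop:Bessel-K} shows the optimal shift is $a=0$, returning precisely $\Pr(X_0Y_0\le-v^2)$, the Bessel negative part. To establish the monotonicity I would either differentiate the value function $\rho\mapsto\bar F_\rho(-v^2)$ by an envelope-theorem argument at the optimal shift $\gamma^\ast(\rho)$ (which is attained, by the exponential tail bound on $f_{\rho,\gamma}$ established earlier), or adapt the geometric section-comparison used for \cref{prop:positive-part-envelope}: condition on the coordinate along the shift ray and compare, section by section, the standard Gaussian measure of the product (hyperbolic) region as $\rho$ increases, the threshold shrinks, and the ray rotates toward the symmetric configuration.

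The main obstacle is exactly this monotonicity in $\rho$. Increasing $\rho$ has two competing effects: it shrinks the threshold $v^2/\rho$, which \emph{raises} the tail probability, but it also rotates the mean-ray toward the balanced direction $\lambda=1$, which changes how effectively the adversarial shift $\gamma$ pushes mass into the region. Because the optimal shift $\gamma^\ast(\rho)$ has no closed form for $\rho\in(0,1)$ and grows without bound as $\rho\to0$, the two effects cannot be decoupled; in particular a naive ``imbalance the two variances'' majorization is \emph{false} for large shifts, so the proof must exploit the precise coupling that ties the ray slope $\lambda(\rho)$ and the threshold $v^2/\rho$ together through the single parameter $\rho$. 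Pinning down the net sign of this interaction — most cleanly in the diagonalized product representation above — is the crux of the argument.
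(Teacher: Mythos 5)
The paper itself offers no proof of \cref{prop:envelope-of-envelopes} --- it is supported only by the numerical evidence in \cref{fig:envelope-of-envelopes} --- so there is no argument to compare yours against line by line; your attempt has to be judged on its own terms, and on those terms it has a fatal gap. Your entire plan reduces the claim to the assertion that $\rho \mapsto \bar F_\rho(-v^2)$ is nondecreasing on $[0,1]$, so that the supremum over $\rho$ sits at the endpoint $\rho=1$. But that monotonicity is \emph{false}, and the paper says so explicitly: ``It is interesting to notice that $\bar F_\rho^{-1}(\alpha)$ is not monotonic in $\rho\in[0,1]$.'' Concretely, \cref{tab:envelope-quantiles} gives $-\bar F_{0}^{-1}(0.05)=2.71$ but $-\bar F_{0.7}^{-1}(0.05)=2.39$, i.e.\ $\bar F_0^{-1}(0.05)=-2.71<-2.39=\bar F_{0.7}^{-1}(0.05)$. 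If $\bar F_\rho(x)$ were nondecreasing in $\rho$ at every fixed $x<0$, the quantiles would be nonincreasing in $\rho$; they are not (the discrepancy of $0.3$ dwarfs the stated Monte Carlo error of $0.01$). So there exist $x<0$ with $\bar F_{0.7}(x)<\bar F_{0}(x)$, and the ``single monotonicity'' you defer to cannot be established. You correctly identify the two competing effects (shrinking threshold $v^2/\rho$ versus rotation of the mean ray), but their net sign genuinely changes over $[0,1]$: the envelope quantile dips to a minimum near $\rho\approx 0.7$ before climbing back up to the Bessel value at $\rho=1$.

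The diagonalization $F_{\rho,\gamma}(-v^2)=\Pr(XY\le -v^2/\rho)$ with independent $X\sim\N(\mu_1-\mu_2,2)$, $Y\sim\N(\mu_1+\mu_2,2)$, and the identification of the ray slope $\lambda(\rho)=(1+\sqrt{1-\rho^2})/\rho$ with threshold $(v^2/2)(\lambda+1/\lambda)$, are correct and potentially useful. But to prove the proposition you would need a \emph{direct endpoint comparison} $\sup_\gamma F_{\rho,\gamma}(x)\le \bar F_{\rho=1}(x)$ for each fixed $\rho\in(0,1)$ and $x<0$ --- one that does not pass monotonically through intermediate values of $\rho$ --- and your proposal does not supply such an argument (nor does the paper). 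As written, the crux is not merely unproven; it is a reduction to a statement the paper's own computations refute.
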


\begin{figure}[!ht]
\includegraphics[width=0.8\textwidth]{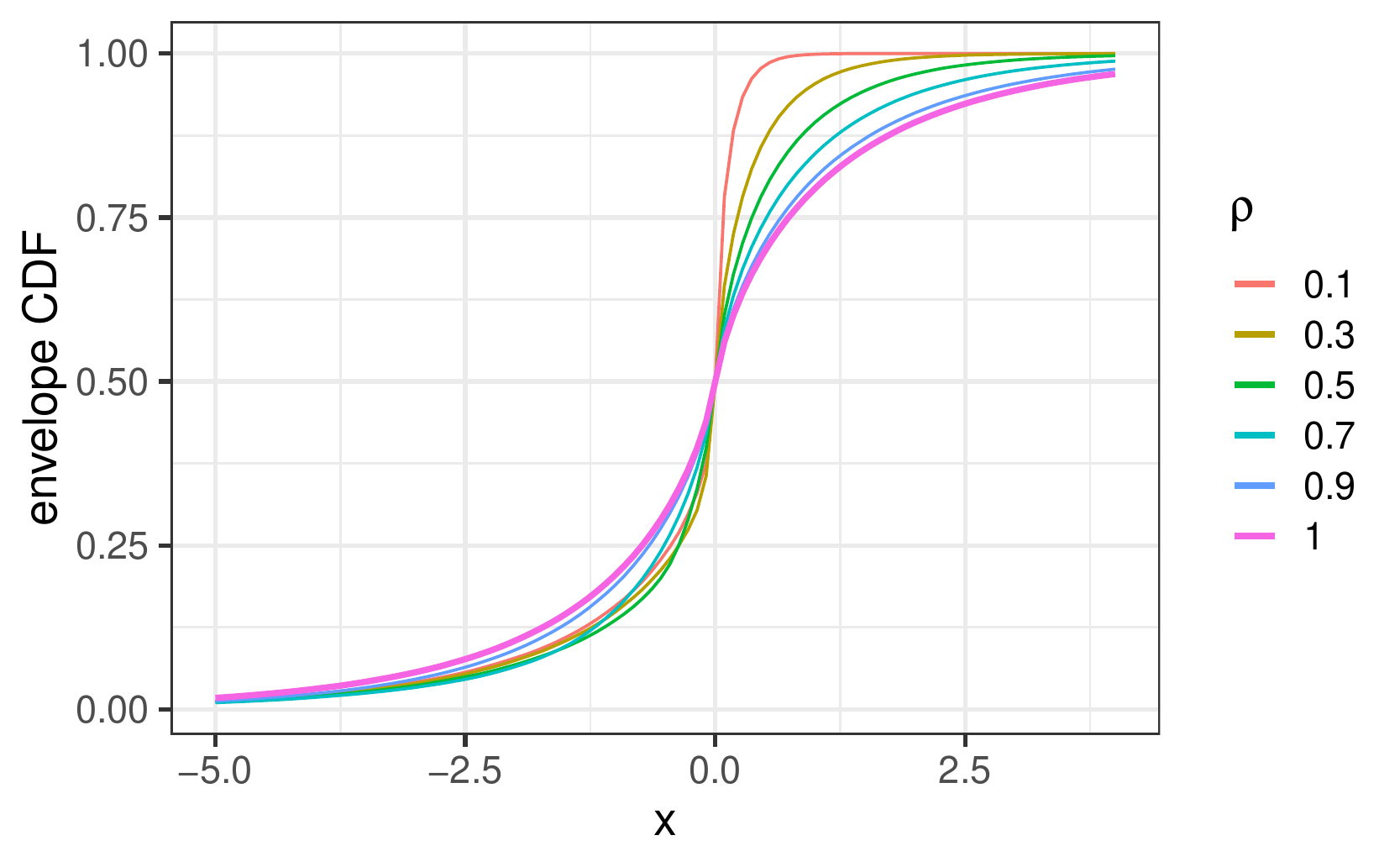}
\caption{The negative part of envelope of $\{\bar{F}_{\rho}: \rho \in [0,1]\}$ is the negative part of $\bar{F}_{\rho=1}$.}
\label{fig:envelope-of-envelopes}
\end{figure}

 \section{Model selection procedures} \label{sec:procedure}
Since we are selecting between two non-nested models, we want to refrain from choosing one of them as the default (the null hypothesis). By treating $\model_0$ and $\model_1$ symmetrically, however, a procedure that takes output value in $\{\model_0, \model_1\}$ cannot simultaneously control both types of error under a given tolerance. It can be seen from \cref{fig:asymp-dist-ws,fig:asymp-WW} that there are cases where the asymptotic distributions of $\lambda_n^{(0:1)}$ under $P_{\Sigma_{n}^{(0)}}$ and $P_{\Sigma_{n}^{(1)}}$ significantly overlap. In such cases, insisting on a dichotomous choice will inevitably result in a high probability of error under at least one model.   

To deal with this possible indistinguishability, we opt for a procedure with \emph{three options}: if two models can be sufficiently distinguished, it selects one of them; otherwise it refrains from commitment by selecting \emph{both models}, formally denoted as the union $\model_0 \cup \model_1$. It is worth stressing that we always assume at least one of the two models is true. By such a design, when the procedure does not output the union, we are ensured that the probability of choosing the wrong model is small, being controlled below a given tolerance $\alpha$. In contrast, in the usual hypothesis testing framework where supposedly $\model_0$ is the null and $\model_1$ is the alternative, one typically cannot simultaneously control both type-I and type-II error. In other words, our procedure selects model with ``confidence''. Recently the same notion has been investigated by \citet{lei2014classification} in a classification setting; \citet{robins2003uniform} also allows a test to make no decision when faced with ambiguity. We formalize the concept as follows. 

Suppose $(X_1, X_2, X_3) \in \mathbb{R}^{n \times 3}$ consists of $n$ independent samples from $\N(\bm{0}, \Sigma_n)$, where $\Sigma_n \in \model_0 \cup \model_1$ is allowed to change with $n$ and $\Sigma_n \rightarrow \Sigma^{\ast}$. The sequence $\Sigma_n$ models signal strength relative to the sample size. We consider a deterministic decision rule
\begin{equation}
\phi_n(S_n): \PSD^{3 \times 3} \rightarrow \{\model_0, \model_1, \model_0 \cup \model_1\},
\end{equation}
where sample covariance $S_n$ is the sufficient statistic. For a given sequence of $\Sigma_n \in \model_i \setminus \model_{1-i}$ with $\Sigma_n \rightarrow \Sigma^{\ast} \in \model_{i} \cup \model_{1-i}$, we define the asymptotic (type-I) error of $\phi_n$ as the large-sample probability of rejecting the true model, i.e.,
\begin{equation}
p_{\err}((\Sigma_n)):=\limsup_{n \rightarrow \infty} \Pr(\phi_n(S_n) = \model_{1-i}),
\end{equation}
where the probability is taken under $P_{\Sigma_n}^{n}$. Similarly, the asymptotic power is defined as
\begin{equation}
p_{\pow}((\Sigma_n)) := \liminf_{n \rightarrow \infty} \Pr(\phi_n(S_n) = \model_{i}).
\end{equation}
We say that the error is \emph{uniformly} controlled below a given size $\alpha > 0$, if
\begin{equation}
p_{\err}^{(0)}:=\sup_{(\Sigma_n^{(0)})} p_{\err}((\Sigma_n^{(0)})) \leq \alpha \quad \text{and} \quad p_{\err}^{(1)}:=\sup_{(\Sigma_n^{(1)})} p_{\err}((\Sigma_n^{(1)})) \leq \alpha,
\end{equation}
where for $i=0,1$ the supremum for $(\Sigma_n^{(i)})$ is taken over all converging sequences of $\Sigma_n^{(i)}$ within $\model_{i} \setminus \model_{1-i}$ (the limit can be in either $\model_i \setminus \model_{1-i}$ or $\model_{i} \cap \model_{1-i}$). In general, the power $p_
{\pow}((\Sigma_n))$ depends on the sequence considered and we do not seek power optimality or guarantee in a uniform sense. In the next section, we will compare the power of several proposed procedures to the theoretical optimal for $\Sigma_n$ considered in the two regimes of local asymptotics. 

By construction, using the $\alpha$-quantile of the envelope as the decision boundary achieves uniform error control. Based on the envelope of envelopes, a simple \emph{uniform rule} is
\begin{equation}
\phi_n^{\text{unif}} = \begin{cases}
\model_0, &\quad \lambda_n^{(0:1)} > -\bar{F}_{\rho=1}^{-1}(\alpha) \\
\model_1, &\quad \lambda_n^{(0:1)} < \bar{F}_{\rho=1}^{-1}(\alpha) \\
\model_0 \cup \model_1, &\quad \text{otherwise}
\end{cases}.
\end{equation}
To gain more power, since $\bar{F}_{\rho}$ is continuous in $\rho$ and $\rho$ can be consistently estimated (recall that $\rho = \rho_{\text{strong}}$ in the weak-strong regime, and $\rho=0$ in the weak-weak regime), an \emph{adaptive rule} can be formed as 
\begin{equation}
\phi^{\text{ada}}_n = \begin{cases}
\model_0, &\quad \lambda_n^{(0:1)} > -\bar{F}_{\hat{\rho}_n}^{-1}(\alpha)\\
\model_1, &\quad \lambda_n^{(0:1)} < \bar{F}_{\hat{\rho}_n}^{-1}(\alpha) \\
\model_0 \cup \model_1, &\quad \text{otherwise}
\end{cases},
\end{equation}
where $\hat{\rho}_n = |\hat{\rho}_{13,n}| \vee |\hat{\rho}_{23,n}|$ is the MLE for $|\rho|$. If it is desired to report a $p$-value, consider a potentially conservative $p\text{-value} = \bar{F}_{\rho}(-|\lambda_n^{(0:1)}|)$. For $\rho = 1$ and $\rho = \hat{\rho}_n$ respectively, the uniform rule and the adaptive rule can be then restated as 
\begin{equation*}
\phi_{n} = \begin{cases}
\model_0, &\quad \lambda_n^{(0:1)} > 0 \text{ and } p\text{-value} < \alpha \\
\model_1, &\quad \lambda_n^{(0:1)} < 0 \text{ and } p\text{-value} < \alpha \\
\model_0 \cup \model_1, &\quad \text{otherwise}
\end{cases}.
\end{equation*}
The conservative $p$-value can be computed numerically by Monte Carlo and then taking the maximum over a grid of values for $\gamma$.

\begin{theorem} \label{thm:error-control}
The adaptive rule $\phi^{\text{ada}}_n$ controls asymptotic error uniformly below $\alpha$ for $0 < \alpha < 1/2$. 
\end{theorem}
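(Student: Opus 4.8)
The plan is to prove uniform control under $\model_0 \setminus \model_1$ only; the case $\model_1 \setminus \model_0$ then follows by symmetry, since the two branches of $\phi^{\text{ada}}_n$ are interchanged by $\lambda_n^{(0:1)} \mapsto -\lambda_n^{(0:1)}$, the asymptotic laws under $\model_1 \setminus \model_0$ are the negations of those under $\model_0 \setminus \model_1$ (compare \cref{eqs:asymp-dist-01} and \cref{eqs:asymp-dist-10}, and \cref{prop:asymp-ww}), and $\hat\rho_n$ is symmetric in the two edges. Because $p_{\err}^{(0)} = \sup_{(\Sigma_n^{(0)})} \limsup_n \Pr(\phi^{\text{ada}}_n = \model_1)$ and the bound to be established does not depend on the sequence, it suffices to fix an arbitrary convergent sequence $\Sigma_n^{(0)} \to \Sigma^\ast$ inside $\model_0 \setminus \model_1$ and show $\limsup_n \Pr(\lambda_n^{(0:1)} < \bar F_{\hat\rho_n}^{-1}(\alpha)) \leq \alpha$.

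First I would split on the limit $\Sigma^\ast$, writing $\rho_{13}^\ast, \rho_{23}^\ast$ for its off-diagonal correlations (note $\sigma_{12,n}=0$ forces $\rho_{12}^\ast=0$, so $\Sigma^\ast \in \model_1$ iff $\rho_{13}^\ast\rho_{23}^\ast=0$). If $\rho_{13}^\ast\rho_{23}^\ast \neq 0$ then $\Sigma^\ast$ is bounded away from $\model_1$; by consistency of the two MLEs and the law of large numbers $n^{-1}\lambda_n^{(0:1)}$ converges to a strictly positive constant (twice the KL divergence from $P_{\Sigma^\ast}$ to $\model_1$), so $\lambda_n^{(0:1)} \to +\infty$ in probability, while $\hat\rho_n \in [0,1]$ keeps the cutoff bounded below by $\bar F_{\rho=1}^{-1}(\alpha) > -\infty$ (\cref{prop:envelope-of-envelopes}); hence the error tends to $0 \leq \alpha$. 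The remaining limits lie in $\model_0 \cap \model_1$: either exactly one of $\rho_{13}^\ast,\rho_{23}^\ast$ vanishes (weak-strong, strong value $\rho$) or both vanish (weak-weak). In both cases $\hat\rho_n$ is the consistent MLE for $|\rho|$, so $\hat\rho_n \xrightarrow{p} |\rho|$, with $|\rho|=0$ in the weak-weak case.

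For these critical limits I would pass to a subsequence realizing the $\limsup$ and, by compactness of $[0,\infty]$, a further subsequence along which the local parameter stabilizes, $\gamma_n := \sqrt n\,\rho_{\text{weak},n} \to \gamma$ (weak-strong) or $\delta_n := \sqrt n\,\rho_{13,n}\rho_{23,n} \to \delta$ (weak-weak). A value $+\infty$ returns to the consistent case, giving error $0$; otherwise \cref{prop:asymp-weak-strong} (with the remark allowing convergent-in-$n$ nuisance entries) or \cref{prop:asymp-ww} yields $\lambda_n^{(0:1)} \distconvto X$, where $X$ is absolutely continuous with CDF belonging to the family whose envelope is $\bar F_{|\rho|}$ (here $\bar F_0 = \bar G$ by the extension preceding the statement). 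Since the envelope is continuous in $\rho$, and is continuous and strictly increasing at its $\alpha$-quantile $q_\alpha := \bar F_{|\rho|}^{-1}(\alpha) < 0$ (as $\alpha < 1/2$ while $\bar F_{|\rho|}(0)=1/2$, and the negative part carries a positive density by the $K_0$ bound, the Bessel form for $\rho=1$, and the $-\chi_1^2$ form for $\rho=0$), the quantile map $\rho \mapsto \bar F_\rho^{-1}(\alpha)$ is continuous at $|\rho|$, so $\bar F_{\hat\rho_n}^{-1}(\alpha) \xrightarrow{p} q_\alpha$. Slutsky's theorem then gives $\lambda_n^{(0:1)} - \bar F_{\hat\rho_n}^{-1}(\alpha) \distconvto X - q_\alpha$, and because $\Pr(X = q_\alpha)=0$,
\begin{equation*}
\lim_k \Pr\!\left(\lambda_n^{(0:1)} < \bar F_{\hat\rho_n}^{-1}(\alpha)\right) = \Pr(X < q_\alpha).
\end{equation*}

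The payoff of the envelope construction is the final domination. Writing $\bar F^\ast_{|\rho|}$ for the pointwise supremum in \cref{def:envelope}, the CDF of $X$ is dominated by $\bar F^\ast_{|\rho|}$, so $\Pr(X < q_\alpha) \leq \bar F^\ast_{|\rho|}(q_\alpha)$; and since $\bar F_{|\rho|}(x) < \alpha$ for every $x < q_\alpha$, left-continuity of $\bar F^\ast_{|\rho|}$ forces $\bar F^\ast_{|\rho|}(q_\alpha) = \lim_{x\uparrow q_\alpha}\bar F^\ast_{|\rho|}(x) \leq \alpha$ (equivalently $\bar F_{|\rho|}(q_\alpha)=\alpha$ by continuity). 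Thus every sub-subsequence limit is $\leq \alpha$, so the original $\limsup$ is $\leq \alpha$, completing all cases. I expect the main obstacle to be the interaction between the data-dependent cutoff $\bar F_{\hat\rho_n}^{-1}(\alpha)$ and the limit law of $\lambda_n^{(0:1)}$: both are functionals of the same scatter matrix, so the argument must route through their joint weak limit via Slutsky and, crucially, through the continuity of the envelope and its quantile in $\rho$, which is what lets the random cutoff be replaced by the deterministic $q_\alpha$ matched to the true regime. The remaining difficulty is purely the bookkeeping of regimes and rates, handled uniformly by the subsequence extraction above.
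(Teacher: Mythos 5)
Your proposal is correct and follows essentially the same route as the paper's proof: reduce to $\model_0\setminus\model_1$ by symmetry, split on whether the limit lies in $\model_0\cap\model_1$ and on the rate of $\rho_{13,n}\rho_{23,n}$ relative to $n^{-1/2}$, invoke consistency of $\hat{\rho}_n$ together with \cref{prop:asymp-weak-strong,prop:asymp-ww,prop:envelope-continuity}, and finish by envelope domination $F_{\gamma,\rho}\le\bar{F}_{\rho}$. Your subsequence extraction for non-convergent local parameters and the explicit Slutsky/quantile-continuity step are refinements of details the paper leaves implicit, not a different argument.
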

\begin{proof}
We show error guarantee when $\model_0$ is true. The same argument holds when $\model_1$ is true. It suffices to show for any converging sequence $\Sigma_n \in \model_{0} \setminus \model_{1}$, 
\begin{equation*}
p_{\err}((\Sigma_n)) = \limsup_{n \rightarrow \infty} \Pr(\phi_n(S_n) = \model_{1}) \leq \alpha,
\end{equation*}
where the probability is measured under $P_{\Sigma_n}^n$. Suppose $\Sigma_{n} \rightarrow \Sigma^{\ast} \in \model_0 \cup \model_1$. If $\Sigma^{\ast} \notin \model_0 \cap \model_1$, then $\lambda_n^{(0:1)}$ is unbounded in probability towards $+\infty$. Hence $\Pr(\phi_n(S_n) = \model_0) \rightarrow 1$ and $p_{\err}(\Sigma_n) = 0$. In the following we prove the claim for $\Sigma^{\ast} \in \model_0 \cap \model_1$. Suppose $\hat{\rho}_{ij,n}$, $\rho_{ij,n}$ and $\rho_{ij}$ respectively denote the corresponding correlation coefficient of $S_n$, $\Sigma_n$ and $\Sigma^{\ast}$. We have three cases depending on the rate at which $\Sigma_n$ converges. 

\begin{enumerate}
\item When $|\rho_{13,n} \rho_{23,n}| \asymp 1/\sqrt{n}$, there are two regimes depending on $\Sigma^{\ast}$. 
\begin{enumerate}
\item In the weak-strong regime, without loss of generality suppose $\sqrt{n} \rho_{13,n} \rightarrow \gamma \neq 0$ and $\rho_{23,n} \rightarrow \rho \neq 0$. By consistency $\hat{\rho}_n = |\hat{\rho}_{13,n}| \vee |\hat{\rho}_{23,n}| \rightarrow_{p} |\rho|$ and the definition of envelope, we have
\begin{equation*}
\limsup_{n} \Pr(\lambda_n^{(0:1)} < \bar{F}_{\hat{\rho}_n}^{-1}(\alpha)) = F_{\gamma,\rho}(\bar{F}_{\rho}^{-1}(\alpha)) \leq F_{\gamma, \rho}(F_{\gamma, \rho}^{-1}(\alpha)) = \alpha.
\end{equation*}

\item In the weak-weak regime, suppose $\sqrt{n} \rho_{13,n} \rho_{23,n} \rightarrow \delta \neq 0$. We have $\hat{\rho}_n = |\hat{\rho}_{13,n}| \vee |\hat{\rho}_{23,n}| = (|\rho_{13,n}| \vee |\rho_{23,n}|) + O_p(1/\sqrt{n}) \rightarrow_{p} 0$ since both $\rho_{13,n}, \rho_{23,n} \rightarrow 0$. By \cref{prop:envelope-continuity}, we have
\begin{equation*}
\begin{split}
\limsup_{n} \Pr(\lambda_n^{(0:1)} < \bar{F}_{\hat{\rho}_n}^{-1}(\alpha)) &= G_{\delta}(\bar{F}_{\rho=0}^{-1}(\alpha))\\
&= G_{\delta}(\bar{G}^{-1}(\alpha)) \leq G_{\delta}(G_{\delta}^{-1}(\alpha)) = \alpha.
\end{split}
\end{equation*}
\end{enumerate}
\item When $|\rho_{13,n} \rho_{23,n}| = o(1/\sqrt{n})$, we have $\lambda_{n}^{(0:1)} \rightarrow_{p} 0$. Since $F^{-1}_{\rho}(\alpha) < c < 0$ for $\alpha < 1/2$, we have $\Pr(\phi_{n}(S_n) = \model_0 \cup \model_1) \rightarrow 1$. 

\item When $|\rho_{13,n} \rho_{23,n}| = \omega(1/\sqrt{n})$, we have $\Pr(\lambda_{n}^{(0:1)} > c) \rightarrow 1$ for any constant $c$ and hence $\Pr(\phi_n(S_n) = \model_0) \rightarrow 1$.
\end{enumerate}
\end{proof}

As can be seen from the proof, the consistency of model selection based on the loglikelihood (or AIC/BIC since in this case $\model_0$ and $\model_1$ have the same dimensions) is a special case when $|\rho_{13,n} \rho_{23,n}| = \omega(n^{-1/2})$, i.e., under strong signal or large enough sample size. However, under $|\rho_{13,n} \rho_{23,n}| = O(n^{-1/2})$, simply choosing the model with the highest loglikelihood (or the lowest AIC/BIC) can lead to large errors, as we will illustrate in the next section. Note that \cref{thm:error-control} provides a ``rate-free'' guarantee, in the sense that it does \emph{not} require any \textit{a priori} assumption on the rate of signal strength relative to the sample size. The envelope of envelopes leads to the same guarantee for the uniform rule. 

\begin{corollary}
The decision rule $\phi_n^{\text{unif}}$ controls asymptotic error uniformly below $\alpha$ for $0 < \alpha < 1/2$.
\end{corollary}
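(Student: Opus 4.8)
The plan is to follow the proof of \cref{thm:error-control} almost verbatim, replacing the data-dependent threshold $\bar{F}_{\hat{\rho}_n}^{-1}(\alpha)$ by the fixed constant $c := \bar{F}_{\rho=1}^{-1}(\alpha)$, and then to absorb the slack created by this replacement through the envelope-of-envelopes bound of \cref{prop:envelope-of-envelopes}. By the same symmetry used there, it suffices to bound the error when $\model_0$ is true: I would fix an arbitrary converging sequence $\Sigma_n \in \model_0 \setminus \model_1$ with $\Sigma_n \rightarrow \Sigma^{\ast} \in \model_0 \cup \model_1$ and show $\limsup_n \Pr(\lambda_n^{(0:1)} < c) \leq \alpha$ under $P_{\Sigma_n}^n$. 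Since $\bar{F}_{\rho=1}(0) = 1/2$ (the corollary established after \cref{prop:positive-part-envelope}) and $\alpha < 1/2$, the constant $c$ lies strictly in the negative part, $c < 0$, which is precisely the region where \cref{prop:envelope-of-envelopes} is informative; note also that $\bar{F}_{\rho=1}$ is continuous and strictly increasing there (it carries the Bessel density of \cref{prop:Bessel-K}), so $c$ is well defined with $\bar{F}_{\rho=1}(c) = \alpha$ exactly.

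I would then split into the same three cases as in \cref{thm:error-control}, according to the rate of $|\rho_{13,n}\rho_{23,n}|$ relative to $n^{-1/2}$. The two degenerate cases carry over unchanged: if $|\rho_{13,n}\rho_{23,n}| = \omega(n^{-1/2})$ (or $\Sigma^{\ast} \notin \model_0 \cap \model_1$) then $\lambda_n^{(0:1)}$ diverges to $+\infty$ in probability and $\Pr(\lambda_n^{(0:1)} < c) \rightarrow 0$; if $|\rho_{13,n}\rho_{23,n}| = o(n^{-1/2})$ then $\lambda_n^{(0:1)} \rightarrow_p 0$ and, because $c < 0$, again $\Pr(\lambda_n^{(0:1)} < c) \rightarrow 0 \leq \alpha$. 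The substantive case is $|\rho_{13,n}\rho_{23,n}| \asymp n^{-1/2}$, where \cref{prop:asymp-weak-strong,prop:asymp-ww} give $\lambda_n^{(0:1)} \distconvto F_{\gamma,\rho}$ in the weak-strong regime and $\lambda_n^{(0:1)} \distconvto G_{\delta}$ in the weak-weak regime, both limits being continuous laws, so that $\limsup_n \Pr(\lambda_n^{(0:1)} < c)$ equals $F_{\gamma,\rho}(c)$ or $G_{\delta}(c)$ respectively.

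The crux is a two-step domination at the single point $c < 0$. First, by \cref{def:envelope} one has $F_{\gamma,\rho}(c) \leq \bar{F}_{\rho}(c)$ and $G_{\delta}(c) \leq \bar{G}(c) = \bar{F}_{\rho=0}(c)$ for every value of the local parameter. Second, \cref{prop:envelope-of-envelopes} asserts that on the negative half-line the pointwise supremum of $\{\bar{F}_{\rho}: \rho \in [0,1]\}$ coincides with $\bar{F}_{\rho=1}$, so that $\bar{F}_{\rho}(c) \leq \bar{F}_{\rho=1}(c)$ for every $\rho \in [0,1]$; this covers the weak-strong envelope directly and the weak-weak envelope through the identification $\bar{F}_{\rho=0} = \bar{G}$ supplied by \cref{prop:envelope-continuity}. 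Chaining these inequalities with $\bar{F}_{\rho=1}(c) = \alpha$ yields $\limsup_n \Pr(\lambda_n^{(0:1)} < c) \leq \bar{F}_{\rho=1}(c) = \alpha$ in every regime, which is the claim.

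The one point I would treat with care — and the main (minor) obstacle — is passing the strict inequality $\lambda_n^{(0:1)} < c$ to the limit as the limiting CDF evaluated at $c$. This is exactly where continuity of $F_{\gamma,\rho}$ and $G_{\delta}$ is used: at a continuity point, $\Pr(\lambda_n^{(0:1)} < c) \rightarrow F(c^-) = F(c)$, so no boundary mass is lost and the strict-versus-weak distinction is immaterial. Equivalently, one may observe that $c = \bar{F}_{\rho=1}^{-1}(\alpha) \leq \bar{F}_{\hat{\rho}_n}^{-1}(\alpha)$ for all $n$ by the same envelope-of-envelopes inequality, so the rejection region of $\phi_n^{\text{unif}}$ is contained in that of $\phi_n^{\text{ada}}$; the corollary then follows at once from \cref{thm:error-control}.
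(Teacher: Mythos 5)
Your proposal is correct and follows essentially the same route as the paper, whose proof of this corollary is simply the observation that it follows from \cref{thm:error-control} together with \cref{prop:envelope-of-envelopes}; your closing remark that $\bar{F}_{\rho=1}^{-1}(\alpha) \leq \bar{F}_{\hat{\rho}_n}^{-1}(\alpha)$, so the rejection region of $\phi_n^{\text{unif}}$ is contained in that of $\phi_n^{\text{ada}}$, is exactly the intended one-line argument. The longer case-by-case rerun of the theorem's proof is a valid but redundant elaboration of the same idea.
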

\begin{proof}
It follows from \cref{thm:error-control} and \cref{prop:envelope-of-envelopes}.
\end{proof}

The uniform rule can be easily applied by comparing the difference in log-likelihoods to a single number, e.g., 3.19 for $\alpha=0.05$ and 5.97 for $\alpha = 0.01$. The adaptive rule can be implemented by numerically evaluating $\bar{F}_{\rho}^{-1}(\alpha)$ via Monte Carlo on a grid of $\rho$ and interpolating. Some values are plotted in \cref{fig:envelope-quantiles} and tabulated in \cref{tab:envelope-quantiles} based on $10^7$ samples. It is interesting to notice that $\bar{F}^{-1}_{\rho}(\alpha)$ is not monotonic in $\rho \in [0,1]$. 

\begin{figure}[!htb]
\includegraphics[width=0.8\textwidth]{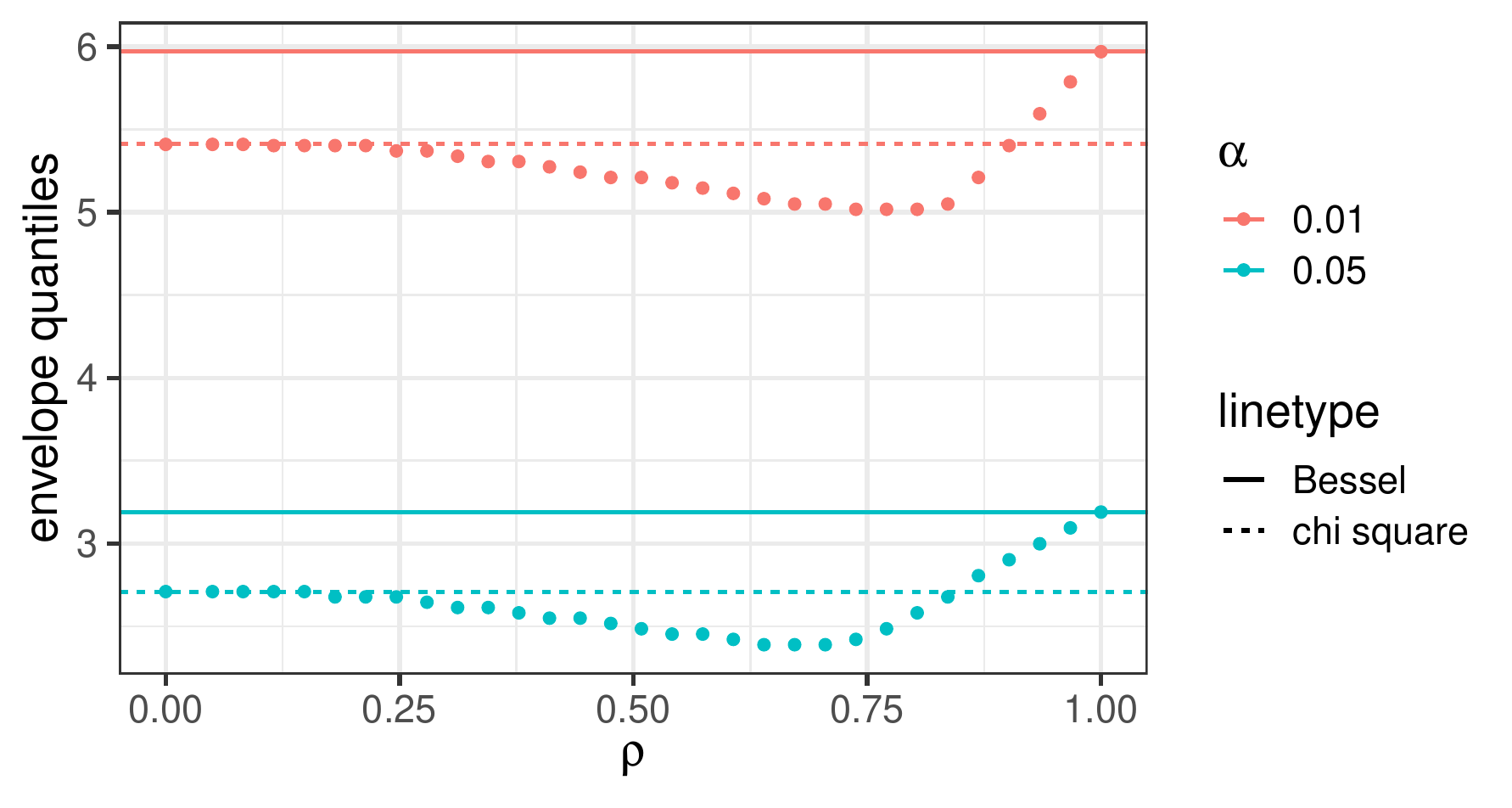}
\caption{Negated $\alpha$-quantiles of $\bar{F}_{\rho}$ evaluated on a grid.}
\label{fig:envelope-quantiles}
\end{figure}

\begin{table}[!htb]
\caption{Envelope quantiles $-\bar{F}_{\rho}^{-1}(\alpha)$, Monte Carlo standard errors $\leq 0.01$}
\label{tab:envelope-quantiles}
\begin{tabular}{lccccccccccc}
\toprule
$\rho$ & 0.0 & 0.1 & 0.2 & 0.3 & 0.4 & 0.5 & 0.6 & 0.7 & 0.8 & 0.9 & 1.0\\
\midrule
$\alpha=0.05$ & 2.71 & 2.71 & 2.68 & 2.65 & 2.58 & 2.48 & 2.42 & 2.39 & 2.58 & 2.90 & 3.19\\
$\alpha=0.01$ & 5.41 & 5.41 & 5.40 & 5.34 & 5.27 & 5.21 & 5.11 & 5.05 & 5.02 & 5.40 & 5.97\\
\bottomrule
\end{tabular}
\end{table}
 \section{Simulations} \label{sec:simu}
In this section we conduct numerical simulations to assess the performance of the adaptive and uniform decision rules proposed in the previous section. In subsequent simulations we use $\alpha=0.05$. In addition to the two methods we propose, we also consider the following methods for comparison. 

\paragraph{Naive} The naive procedure selects the model with a higher likelihood (or equivalently, a lower AIC/BIC, since $\model_0$ and $\model_1$ have the same dimensions), namely
\begin{equation*}
\phi_n^{\text{naive}} = \begin{cases}
\model_0, \quad \lambda_n^{(0:1)} > 0\\
\model_1, \quad \lambda_n^{(0:1)} < 0 
\end{cases}.
\end{equation*}
This is effectively choosing a single model based on AIC/BIC since the penalty terms cancel out as $\model_0$ and $\model_1$ have the same dimension.

\paragraph{Interval Selection} This method is adapted from \citet{drton2004model}. We construct $(1-\alpha)$-level non-simultaneous confidence intervals on correlation coefficients $\rho_{12}$ and $\rho_{12 \cdot 3}$ with Fisher's $z$-transform \citep{fisher1924distribution}. The decision rule is 
\begin{equation}
\phi_n^{\text{interval}} = \begin{cases}
\model_0, &\quad 0 \in \text{C.I.}(\rho_{12}) \text{ and } 0 \notin \text{C.I.} (\rho_{12 \cdot 3})  \\
\model_1, &\quad 0 \in \text{C.I.} (\rho_{12 \cdot 3}) \text{ and } 0 \notin \text{C.I.} (\rho_{12}) \\
\model_0 \cup \model_1, &\quad \text{otherwise}
\end{cases}.
\end{equation}
Note that the interval selection method controls asymptotic error below $\alpha$. For example, when $\model_0$ is true, 
\begin{equation*}
\limsup_{n} \Pr(\phi_n^{\text{interval}} = \model_1) \leq \limsup_{n} \Pr(0 \notin \text{C.I.} (\rho_{12})) \leq \alpha.
\end{equation*}

We conduct numerical simulations in the following three settings.

\subsection{Local hypotheses} We simulate under $\Sigma_n^{(0)}$ and $\Sigma_n^{(1)}$ (variances are set to unity) for the two regimes considered in \cref{sec:local-asymp}. The power is compared to the theoretically optimal. Since exact values of the total variation distance are intractable, we plot bounds given by \cref{eqs:bound-power} in grey curves. We perform 4,000 replications for each point on the graphs.

See \Cref{fig:size-local-ws,fig:power-local-ws} for the size and power in the weak-strong regime (\cref{eqs:sigma-0-ws,eqs:sigma-1-ws}) under $n = 1,000$. Smaller sample sizes $n=100,200,\dots$ generate very similar results. See \Cref{fig:size-power-local-ww} for the size and power in the weak-weak regime (\cref{eqs:sigma-0-ww,eqs:sigma-1-ww}), where we set $\rho_{13,n}=n^{-a/4}$, $\rho_{23,n}=n^{-1/2+a/4}$ and let $a$ vary. We observe that (i) the naive method does not control error at all; (ii) the other three methods control error uniformly even under relatively small $n$. We also observe that the relation ``adaptive'' $>$ ``uniform'' $>$ ``interval'' holds in general in terms of both size and power. By comparing to the grey curves, we regard the adaptive rule as achieving near-optimal power in these settings. 

\begin{figure}[htbp]
\includegraphics[width=.9\textwidth]{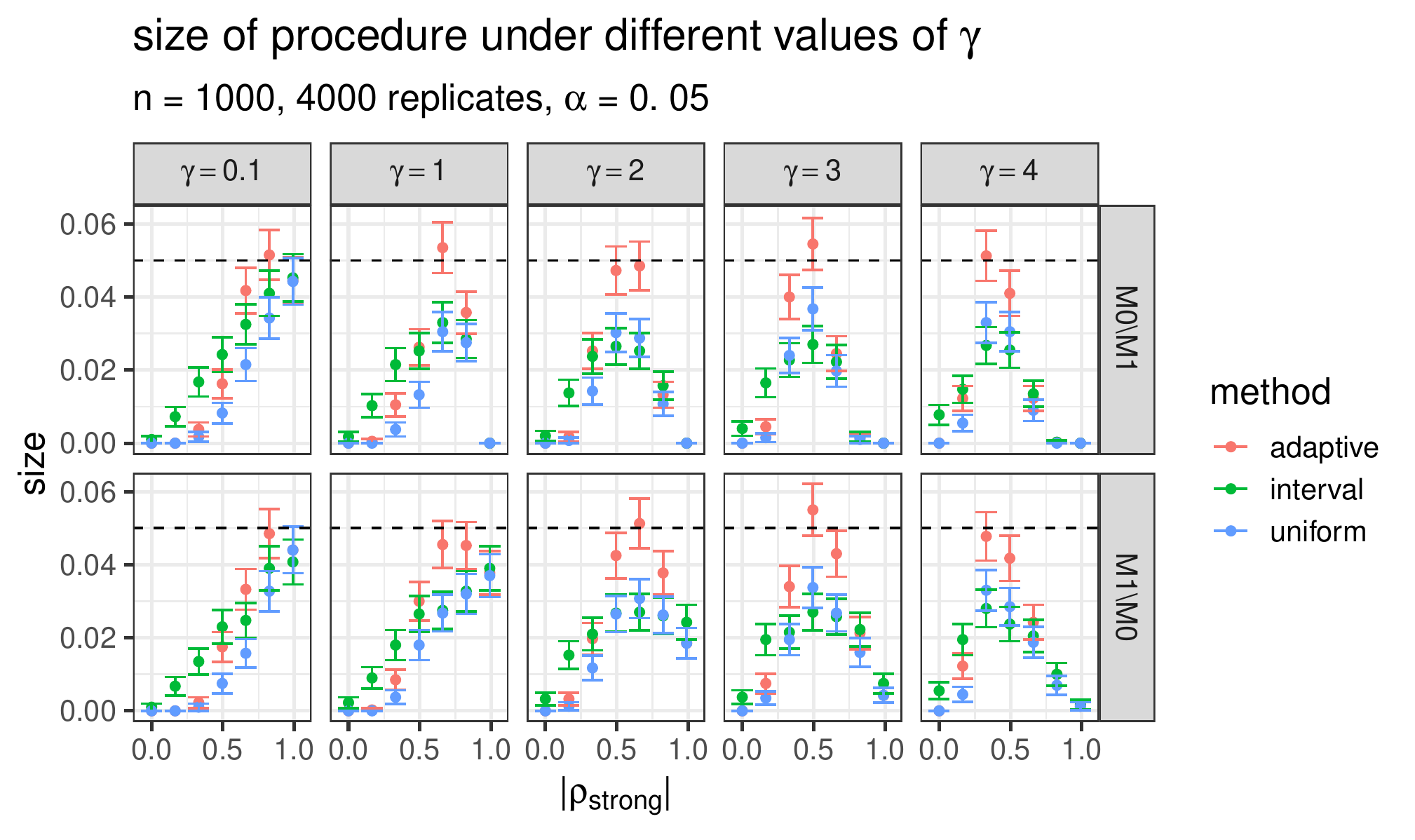}
\includegraphics[width=.9\textwidth]{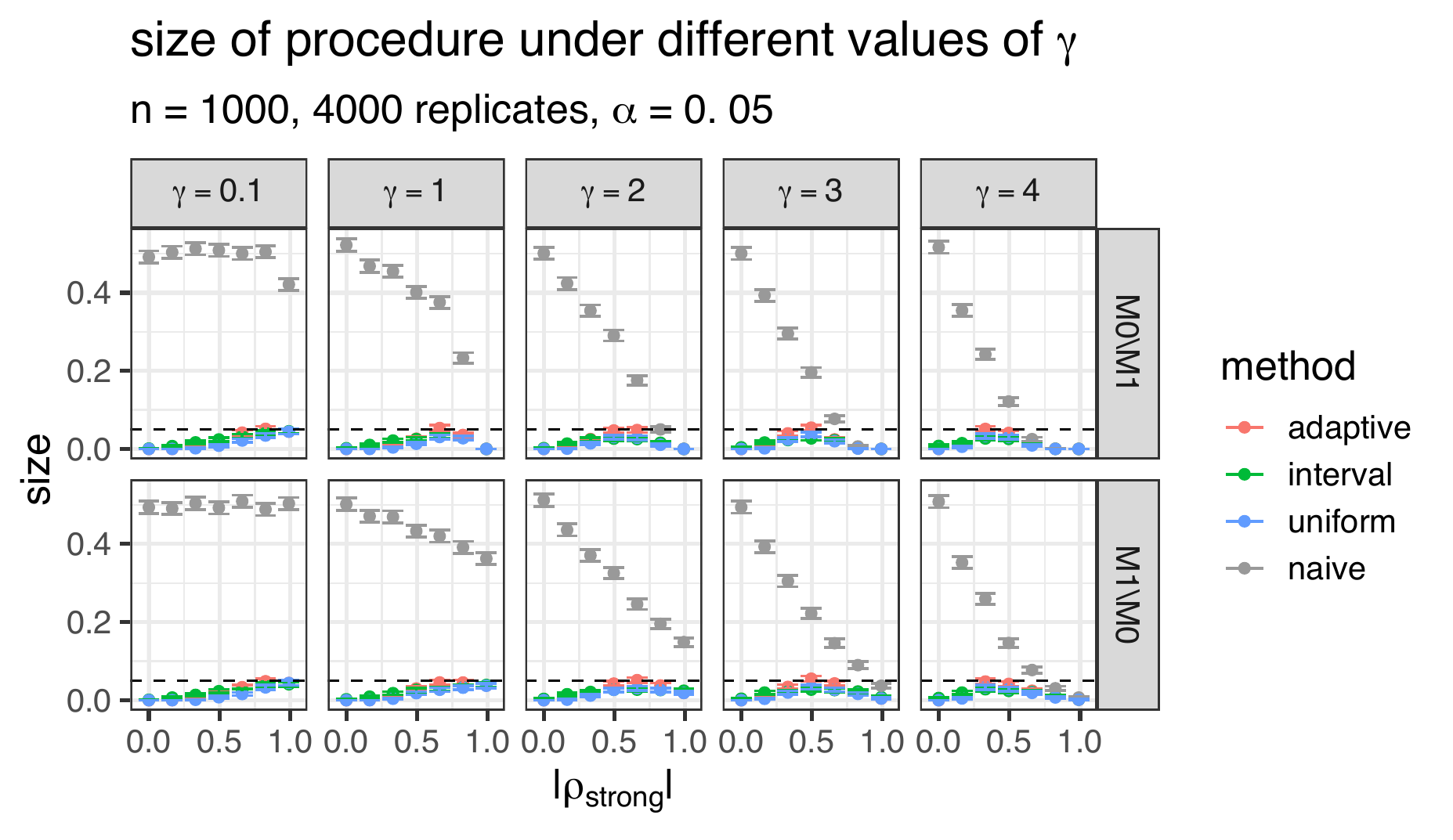}
\caption{Size $\Pr(\phi_n = \mathcal{M}_{1-i} | \mathcal{M}_i)$ of the procedures (with 95\% confidence intervals) under the weak-strong regime of local hypotheses. $\alpha = 0.05$ is marked as dashed. The naive method is only included in the second plot for better visualization.}
\label{fig:size-local-ws}
\end{figure}

\begin{figure}[htbp]
\includegraphics[width=.9\textwidth]{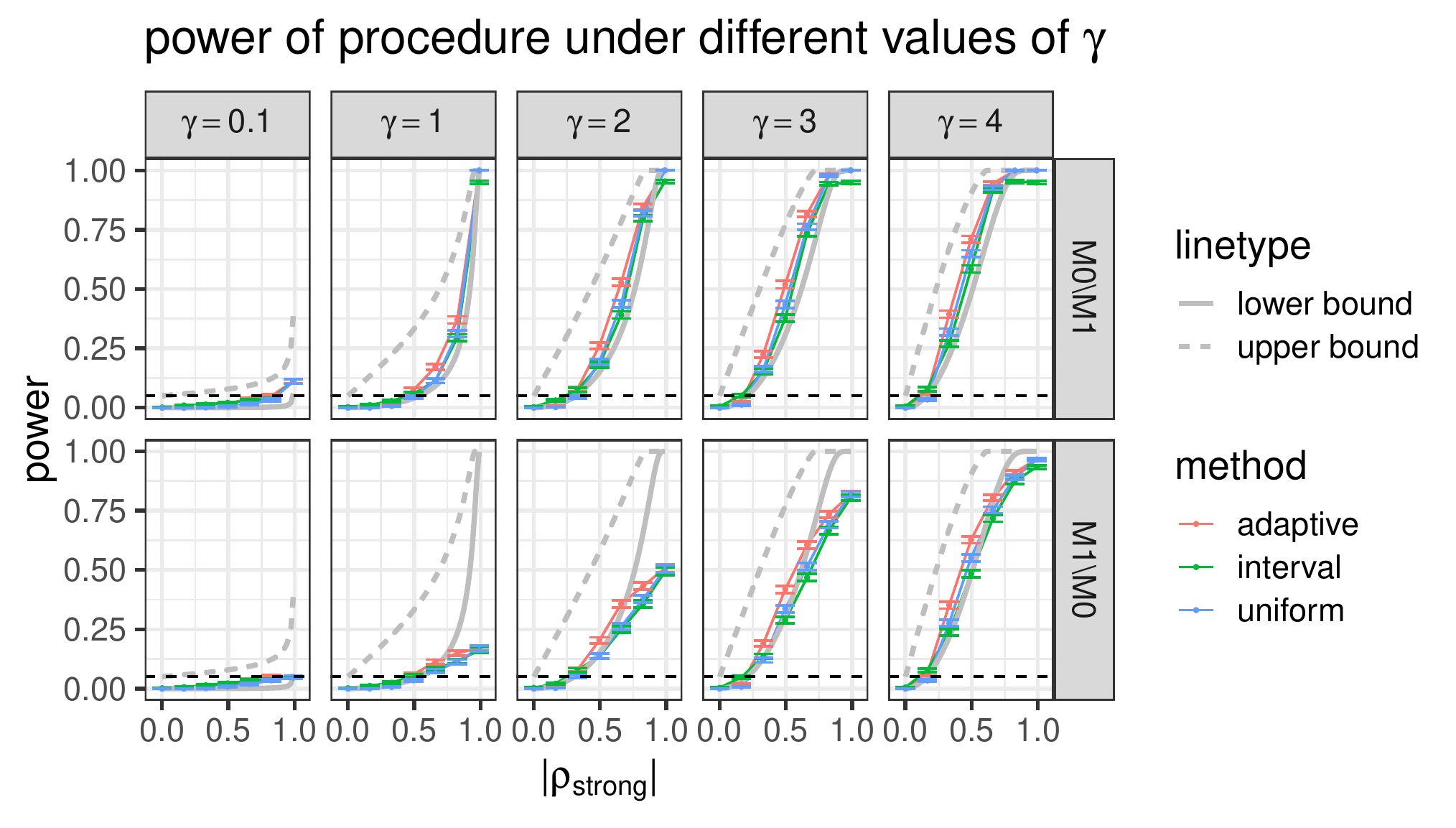}
\caption{Power $\Pr(\phi_n = \mathcal{M}_{i} | \mathcal{M}_i)$ of the procedures (with 95\% confidence intervals) under the weak-strong regime of local hypotheses. $\alpha = 0.05$ is marked as dashed. Grey curves are bounds on the theoretically optimal power.}
\label{fig:power-local-ws}
\end{figure}

\begin{figure}[htbp]
\includegraphics[width=.9\textwidth]{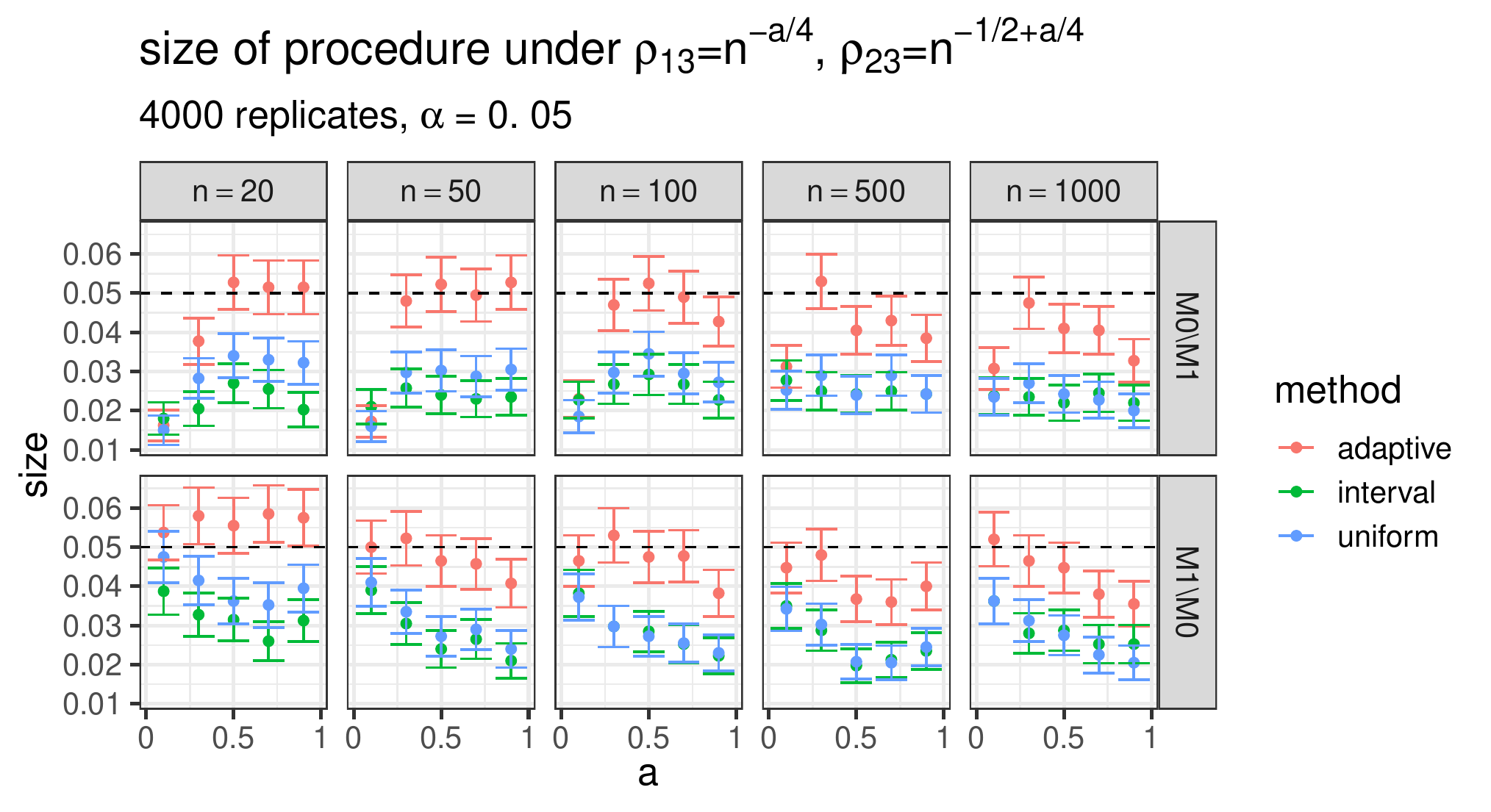}
\includegraphics[width=.9\textwidth]{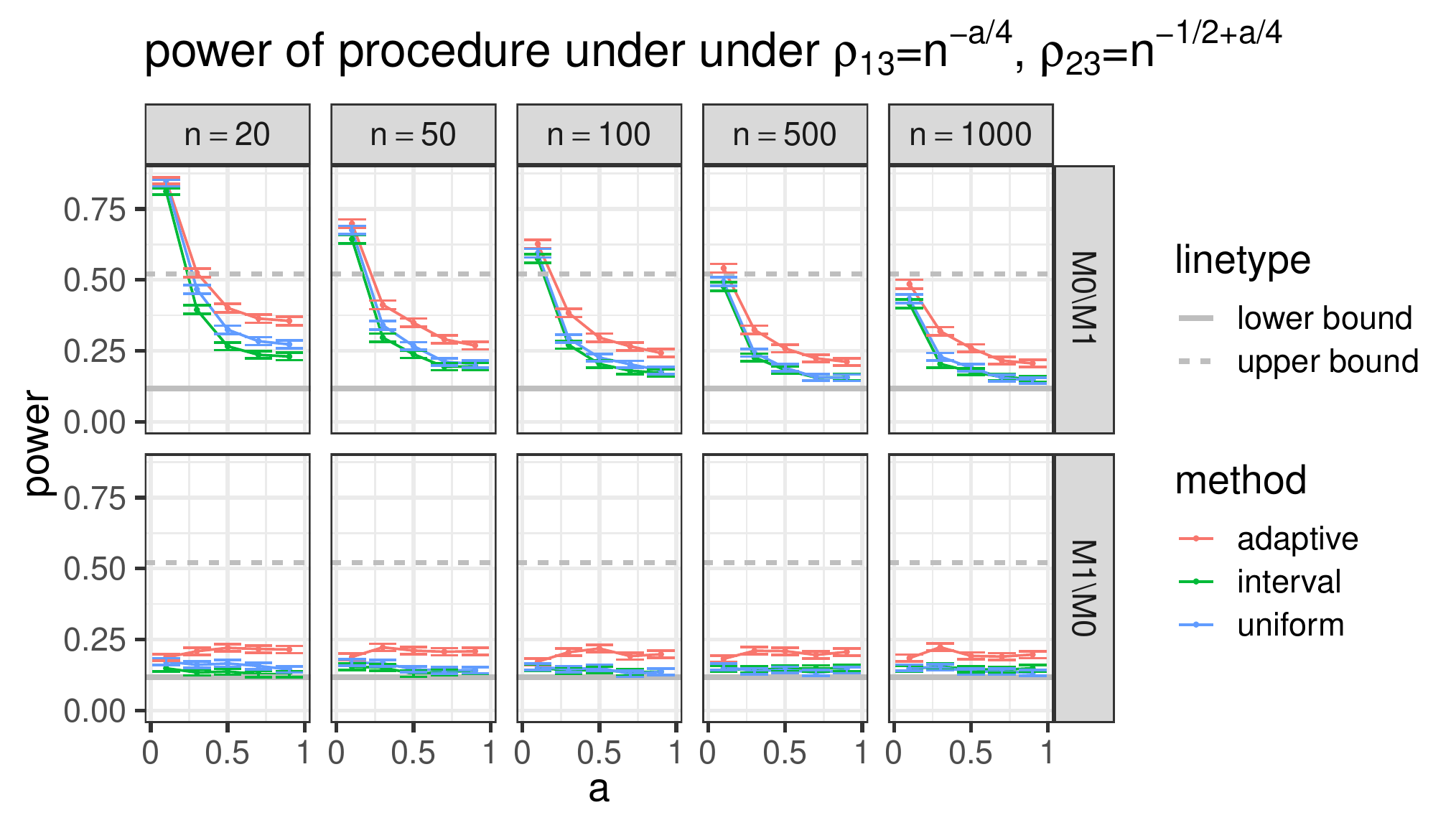}
\caption{Size $\Pr(\phi_n = \mathcal{M}_{1-i} | \mathcal{M}_i)$ and power $\Pr(\phi_n = \mathcal{M}_{i} | \mathcal{M}_i)$ of the procedures (with 95\% confidence intervals) under the weak-weak regime of local hypotheses ($\rho_{13,n} \rho_{23,n} = n^{-1/2}$). $\alpha = 0.05$ is marked as dashed. Grey lines are bounds on the theoretically optimal power in the second plot. The naive method is excluded due to its large type-I error.}
\label{fig:size-power-local-ww}
\end{figure}

\subsection{Projected Wishart} We generate a covariance matrix by firstly drawing $\tilde{\Sigma}$ from the Wishart distribution (with the scale matrix chosen as $\sigma_{ij} = (-1/2)^{|i-j|}$) and then projecting $\tilde{\Sigma}$ into $\model_0$ or $\model_1$ respectively by finding the MLE under each model. Then we perform model selection based on two sets of zero-mean Gaussian samples generated with the two projected covariances respectively. We vary the degrees of freedom for the Wishart distribution. See \Cref{fig:wishart} for the results. 

\begin{figure}[htbp]
\includegraphics[width=.9\textwidth]{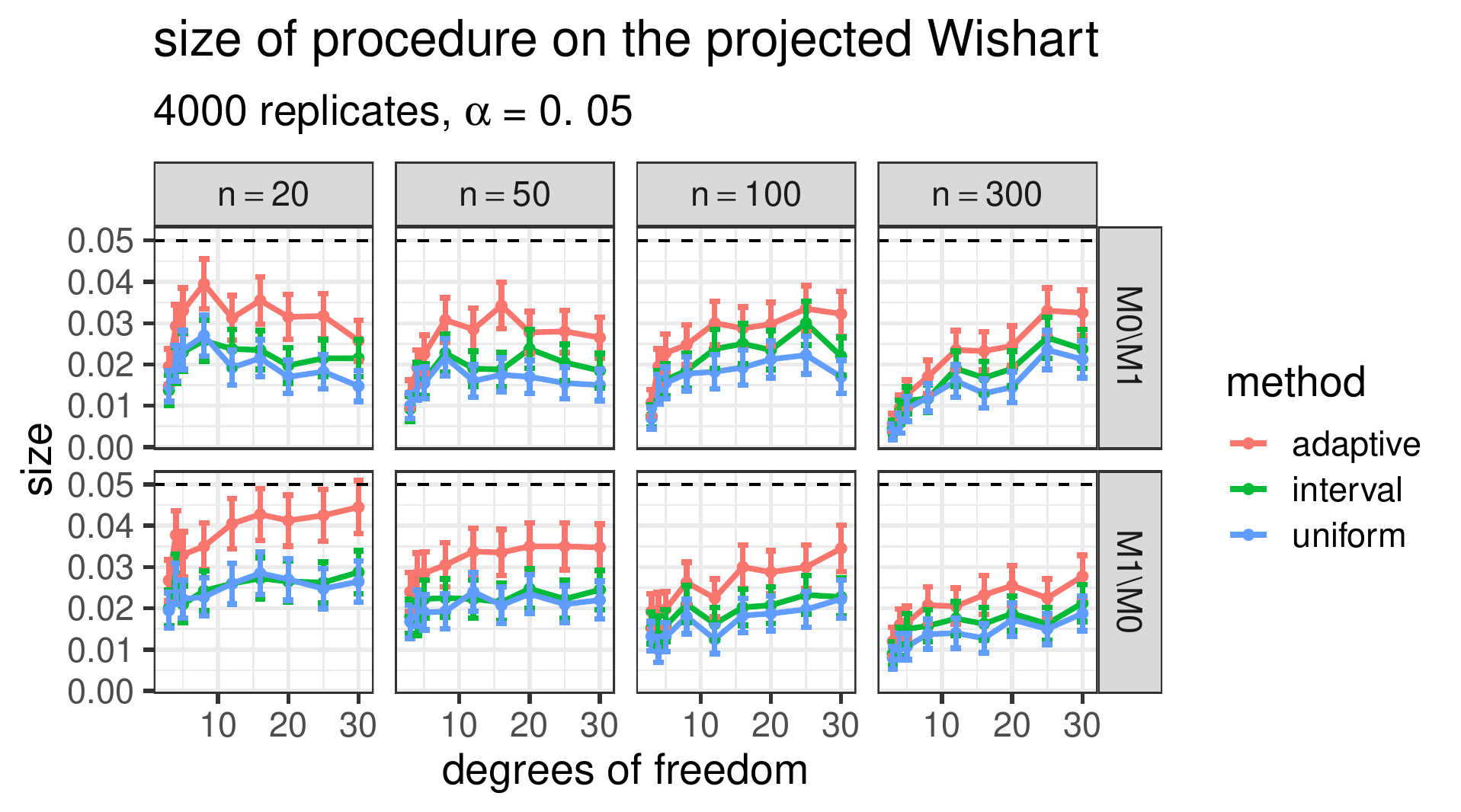}
\includegraphics[width=.9\textwidth]{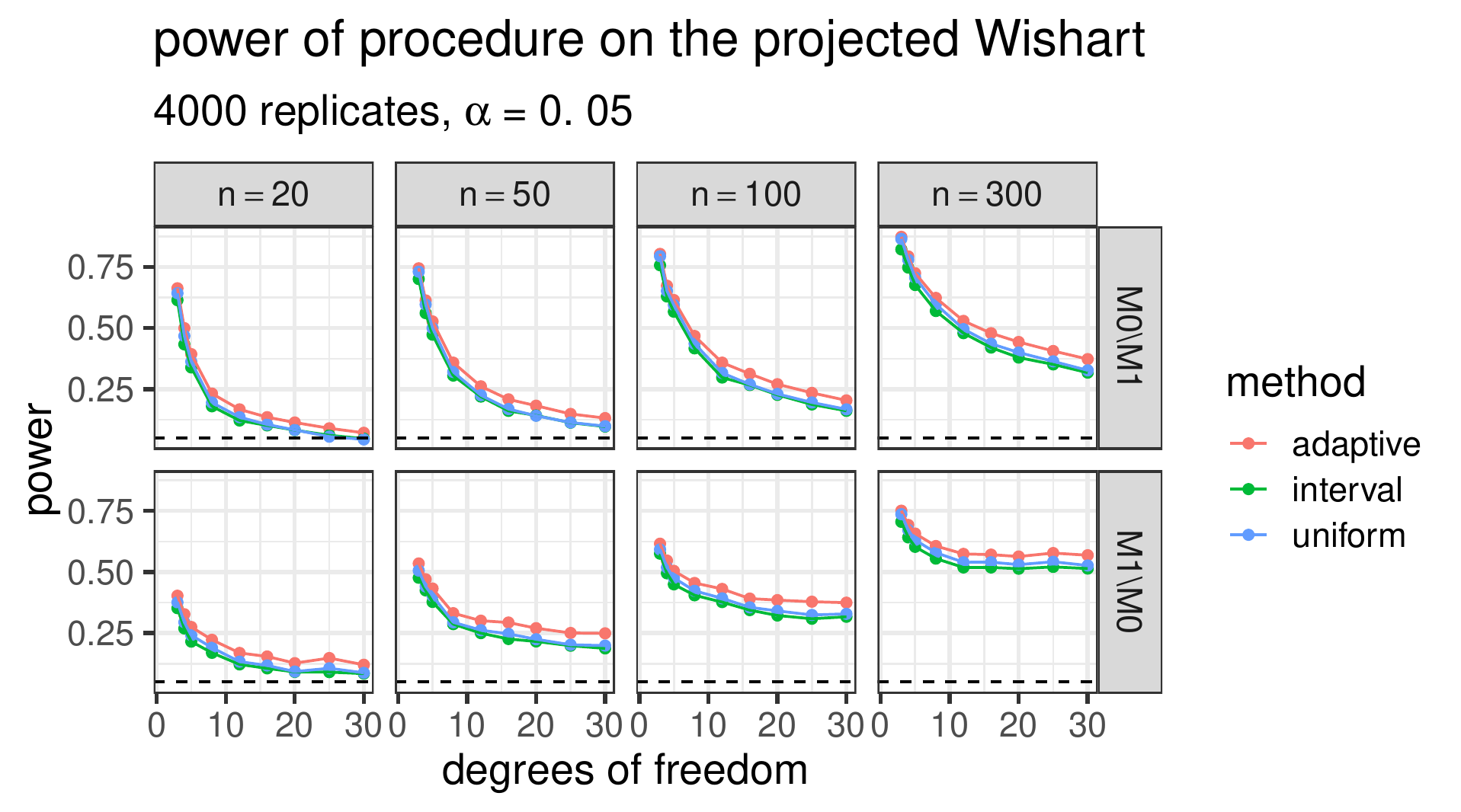}
\caption{Size $\Pr(\phi = \mathcal{M}_{1-i} | \mathcal{M}_i)$ and power $\Pr(\phi = \mathcal{M}_i | \mathcal{M}_i)$ of the procedures on projected Wishart matrices (with 95\% confidence intervals). $\alpha = 0.05$ is marked as dashed. The naive method makes large errors and is excluded.}
\label{fig:wishart}
\end{figure}

\subsection{Conditional on covariates} We consider the common regression setting where two types of independences are contrasted conditional on a set of covariates $X \in \mathbb{R}^p$. In other words, we want to select between $\mathcal{M}_0: Y_1 \indep Y_2 \mid X$ and $\mathcal{M}_1: Y_1 \indep Y_2 \mid Y_3, X$. We generate instances by 
\begin{equation}
(Y_1, Y_2, Y_3) = X^{\T} (\beta_1, \beta_2, \beta_3) + E, \quad E \sim \N(\bm{0}, \Sigma),
\end{equation}
where we use the previous projected Wishart to generate error covariance $\Sigma$ under $\model_0$ and $\model_1$. We perform model selection by firstly regressing $(Y_1, Y_2, Y_3)$ onto $X$ with least squares and then apply the model selection procedures to the residual covariance. Covariates are randomly drawn from standard Gaussians and regression coefficients are generated from a $t$-distribution with $4$ degrees of freedom. We fix $n=1,000$ and vary the number of covariates $p$. The results are presented in \Cref{fig:regression}. We observe that the proposed procedure continues to maintain nominal size until $p$ is relatively large compared to $n$. The power performance, on the other hand, does not seem to vary much as $p$ grows. 

\begin{figure}[htbp]
\includegraphics[width=.8\textwidth]{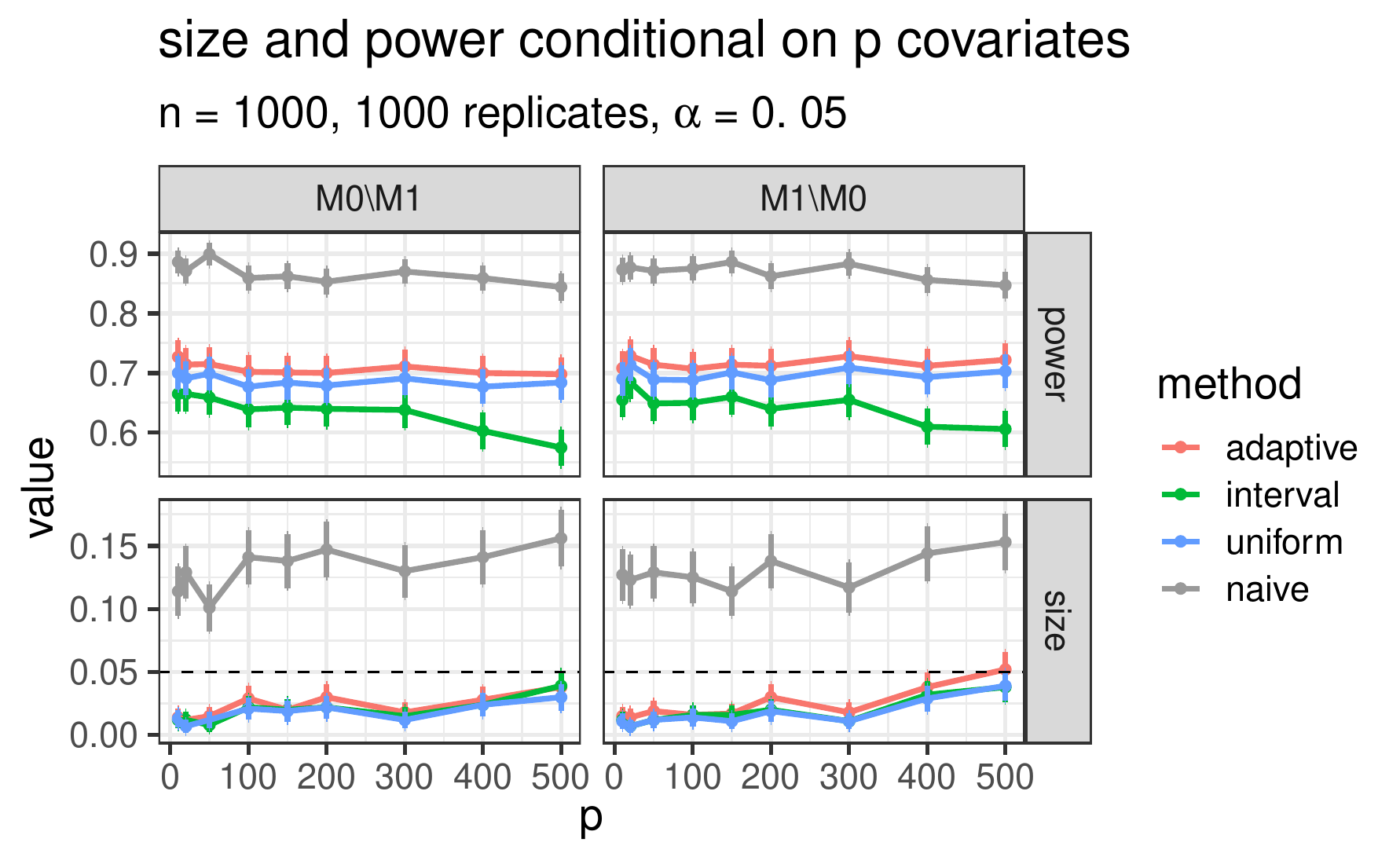}
\caption{Size $\Pr(\phi = \mathcal{M}_{1-i} | \mathcal{M}_i)$ and power $\Pr(\phi = \mathcal{M}_i | \mathcal{M}_i)$ of the model selection procedures conditioned on $p$ covariates (with 95\% confidence intervals). Error covariances are generated from the projected Wishart. The procedures are applied to the least-squares residual covariance. $\alpha = 0.05$ is marked as dashed.}
\label{fig:regression}
\end{figure}
 \section{Real data example} \label{sec:real-data}
In this section we showcase an example of applying the method to edge orientation in learning a DAG. In studying the American occupational structure, \citet{blau1967american} measured the following covariates on $n = 20,700$ subjects:
\begin{itemize}
\item[] $V$: father's educational attainment,
\item[] $X$: father's occupational status,
\item[] $U$: child's educational attainment,
\item[] $W$: status of child's first job,
\item[] $Y$: status of child's occupation in 1962.
\end{itemize}
The data is summarized as the following correlation matrix of $(V,X,U,W,Y)$
\begin{equation*}
S_n = \begin{pmatrix}
1.000 & 0.516 & 0.453 & 0.332 & 0.322\\
0.516 & 1.000 & 0.438 & 0.417 & 0.405\\
0.453 & 0.438 & 1.000 & 0.538 & 0.596\\
0.332 & 0.417 & 0.538 & 1.000 & 0.541\\
0.322 & 0.405 & 0.596 & 0.541 & 1.000\\
\end{pmatrix}.
\end{equation*}
At level $\alpha = 0.01$, the PC algorithm identifies the skeleton by $d$-separation, which only removes the edge between $V$ and $Y$ based on $Y \indep V \mid U, X$. This is because the PC algorithm tests for conditional independence given smaller conditioning sets first. By a common-sense temporal ordering $\{V,X\} < U < \{W, Y\}$ among the variables, edges can be oriented except for $X-V$ and $W-Y$; see \cref{fig:blau}. The edge $V-X$ does not involve a collider and the orientation is statistically unidentifiable. 

\begin{figure}[!htb]
\begin{tikzpicture}[rv/.style={ellipse, draw, very thick, minimum size=7mm}, node distance=25mm, >=stealth]
\node[rv] (1) {$V$};
\node[rv, right of=1] (2) {$U$};
\node[rv, below of=1] (3) {$X$};
\node[rv, right of=3] (4) {$W$};
\node[rv, right of=2, yshift=-12mm] (5) {$Y$};
\draw[->, very thick, color=blue] (1) -- (2);
\draw[->, very thick, color=blue] (1) -- (4);
\draw[very thick, color=red] (1) -- (3);
\draw[->, very thick, color=blue] (2) -- (5);
\draw[->, very thick, color=blue] (2) -- (4);
\draw[->, very thick, color=blue] (3) -- (2);
\draw[->, very thick, color=blue] (3) -- (4);
\draw[->, very thick, color=blue] (3) -- (5);
\draw[very thick, color=red] (4) -- (5);
\end{tikzpicture}
\caption{CPDAG inferred from \citet{blau1967american} dataset. The skeleton is inferred based on $d$-separation at level $\alpha = 0.01$ with the PC algorithm. Blue edges are oriented based on temporal ordering $\{V,X\} < U < \{W, Y\}$.}
\label{fig:blau}
\end{figure}
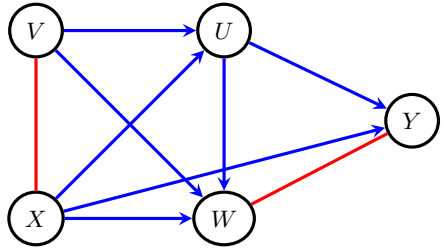

However, the orientation of $W-Y$ raises the interesting question of testing
\begin{equation*}
\model_0 \ (Y \rightarrow W): V \indep Y \mid U, X \quad \text{versus} \quad \model_1 \ (Y \leftarrow W): V \indep Y \mid W, U, X.
\end{equation*}
We apply our method to the conditional correlation of $(V,W,Y)$ given $(U,X)$. We have $\lambda_{n}^{(0:1)} = 3.72$ and $p\text{-value}=0.026$ under the envelope distribution $\bar{F}_{\hat{\rho}_n}$. Therefore, under $\alpha = 0.01$ the adaptive procedure would choose $\model_0 \cup \model_1$ and leave the orientation undetermined (the procedure would choose $Y \rightarrow W$ under $\alpha = 0.05$). This example illustrates the potential ambiguity in model selection even under a large sample size. The reader is referred to \citet[Section 5.8.4]{spirtes2000causation} for another discussion of the same example.

 \section{Discussion} \label{sec:discussion}
We have considered choosing between marginal independence and conditional independence in a Gaussian graphical model, assuming we know at least one of them is true. The loglikelihood ratio statistic converges to a tight law under a sequence of truths converging to the intersection of the two models at a certain rate. The asymptotic distribution is shown to be non-uniform as it depends on \emph{where} and \emph{how} the sequence converges. We address this non-uniformity issue by introducing a family of envelope distributions that are well-behaved and bring back the continuity of asymptotic laws, as indexed by a parameter that can be consistently estimated. Contrary to the usual Neyman--Pearson hypothesis testing, we treat the two models symmetrically and develop model selection rules that choose both models when they are indistinguishable under a given sample size. Such rules can be designed according to the quantiles of the envelope distributions to uniformly control the type-I error below a desired level. As noted before we believe that ``rate-free'' asymptotic guarantees that are uniform are more useful in practice, since they do not rely upon untestable assumptions regarding the sample size and the signal strength.

In this report we restricted ourselves to the Gaussian case. For testing conditional independence, some form of distributional assumption seems inevitable, since recent work of \citet{shah2018hardness} shows that testing conditional independence without restricting the form of conditional independence is impossible in general.

Selection of non-nested models routinely relies on penalized scores based on loglikelihoods, such as the negated AIC and BIC. However, as we show, in the context of a weak signal relative to the sample size, simply choosing the model with the highest score can lead to considerable errors. To select models with ``confidence'', one should also look at the ``gaps'' between the top scores. We believe that the method developed in this paper may be generalizable to a wider range of model selection problems.  
\section*{Acknowledgements}
RG thanks Michael Perlman for helpful discussions. TR thanks Robin Evans and Peter Spirtes. The research was supported by the U.S. Office of Naval Research.

\bibliographystyle{plainnat}

\end{document}